\DeclareMathOperator{\Aut}{Aut}
\DeclareMathOperator{\inn}{Inn}
\DeclareMathOperator{\laut}{LAut}
\DeclareMathOperator{\Span}{Span}
\theoremstyle{plain}
\newtheorem{theorem}{Theorem}[section]
\newtheorem{corollary}[theorem]{Corollary}
\newtheorem{proposition}[theorem]{Proposition}
\newtheorem{lemma}[theorem]{Lemma}
\theoremstyle{definition}
\newtheorem{definition}[theorem]{Definition}
\newtheorem{remark}[theorem]{Remark}
\newtheorem{example}[theorem]{Example}
\crefname{theorem}{Theorem}{Theorems}
\crefname{lemma}{Lemma}{Lemmas}
\crefname{corollary}{Corollary}{Corollaries}
\crefname{proposition}{Proposition}{Propositions}
\crefname{definition}{Definition}{Definitions}
\crefname{example}{Example}{Examples}
\crefname{remark}{Remark}{Remarks}
\crefname{conjecture}{Conjecture}{Conjectures}
\crefname{question}{Question}{Questions}
\crefname{section}{Section}{Sections}
\crefname{equation}{\unskip}{\unskip}
\crefname{enumi}{\unskip}{\unskip}
\crefname{subsection}{Subsection}{Subsections}
\newcommand{\0}{\theta}
\newcommand{\G}{\Gamma}
\newcommand{\af}{\alpha}
\newcommand{\bt}{\beta}
\newcommand{\lb}{\lambda}
\newcommand{\vf}{\varphi}
\newcommand{\dl}{\delta}
\newcommand{\sg}{\sigma}
\newcommand{\m}{{}^{-1}}
\newcommand{\sst}{\subseteq}
\newcommand{\impl}{\Rightarrow}
\newcommand{\gen}[1]{\langle #1\rangle}
\newcommand{\lf}{\lfloor}
\newcommand{\rf}{\rfloor}
\newcommand{\wtl}{\widetilde}
\newcommand{\ch}{\mathrm{char}}
\newcommand{\id}{\mathrm{id}}
\begin{document}
	\title{Lie automorphisms of incidence algebras}
	
	\author{\'Erica Z. Fornaroli}
	\address{Departamento de Matem\'atica, Universidade Estadual de Maring\'a, Maring\'a, PR, CEP: 87020--900, Brazil}
	\email{ezancanella@uem.br}
	
	\author{Mykola Khrypchenko}
	\address{Departamento de Matem\'atica, Universidade Federal de Santa Catarina,  Campus Reitor Jo\~ao David Ferreira Lima, Florian\'opolis, SC, CEP: 88040--900, Brazil}
	\email{nskhripchenko@gmail.com}
	
	\author{Ednei A. Santulo Jr.}
	\address{Departamento de Matem\'atica, Universidade Estadual de Maring\'a, Maring\'a, PR, CEP: 87020--900, Brazil}
	\email{easjunior@uem.br}
	
	\subjclass[2010]{Primary: 16S50, 17B60, 17B40; secondary: 16W10}
	\keywords{Lie automorphism, incidence algebra, automorphism, anti-automorphism}
	
	\begin{abstract}
		Let $X$ be a finite connected poset and $K$ a field. We give a full description of the Lie automorphisms of the incidence algebra $I(X,K)$. In particular, we show that they are in general not proper.
	\end{abstract}
	
	\maketitle
	
	\section*{Introduction}
	
	Let $A$ be an associative algebra over a commutative ring $R$. Then $A$ becomes a Lie algebra under the \textit{commutator} $[a,b]=ab-ba$ of $a,b\in A$. By a \textit{Lie automorphism} of $A$ one means an automorphism of the Lie algebra $(A,[\phantom{a},\phantom{a}])$. Clearly, an automorphism $\vf$ of $A$ and the negative $-\psi$ of an anti-automorphism $\psi$ of $A$ are Lie automorphisms of $A$. Moreover, if $\nu$ is an $R$-linear central-valued map on $A$ such that $\nu([A,A])=\{0\}$ and $\phi$ is either $\vf$ or $-\psi$ as above, then $\phi+\nu$ is a Lie endomorphism of $A$, which, under certain assumptions on $\nu$, becomes bijective. We will call such Lie automorphisms {\it proper}. Hua~\cite{Hua51} proved that each Lie automorphism of the full matrix ring $M_n(R)$, $n>2$, over a division ring $R$, $\ch(R)\not\in\{2,3\}$, is proper. Martindale extended this result to Lie isomorphisms between primitive rings~\cite{Martindale63}, simple rings~\cite{Martindale69simple} and, yet more generally, between prime rings~\cite{Martindale69}. {\DJ}okovi\'c~\cite{dzD} described the group of Lie automorphisms of the upper triangular matrix algebra $T_n(R)$, where $R$ is a commutative unital ring with trivial idempotents. It follows from his description that each Lie automorphisms of $T_n(R)$ is proper (see \cref{Laut-T_n(K)}). Cao~\cite{Cao97} generalized the result by {\DJ}okovi\'c to the case of commutative rings without any restriction on idempotents. A similar result has also been independently proved by Marcoux and Sourour~\cite{Marcoux-Sourour99}. Cecil~\cite{Cecil} showed that Lie isomorphisms between block-triangular matrix algebras over a UFD are proper.
	
	The incidence algebra $I(X,R)$ of a locally finite poset $X$ over a commutative unital ring $R$ is a natural generalization of $T_n(R)$. Jordan and Lie maps on $I(X,R)$ (and even on more general algebras) have been actively studied during the last 5 years (see~\cite{Akkurts-Barker,BFK,BFK2,KW,WangXiao19,XiaoYang20,Zhang-Khrypchenko}). Usually, all Lie-type maps on $I(X,R)$ are proper. This is no longer the case for Lie automorphisms of $I(X,K)$, where $K$ is a field and $X$ is finite and connected, as we show in this paper. In \cref{sec-prelim} we recall basic definitions about posets and their incidence algebras. In \cref{sec-J_i,sec-ideals} we prove several facts on the structure of $(I(X,K),[\phantom{a},\phantom{a}])$ which will be used throughout the work. In \cref{sec-LAut-decomp} we reduce the description of all Lie automorphisms of $I(X,K)$ to the description of those which we call {\it elementary} (see \cref{LAut-cong-Inn_1-rtimes-wtl-LAut}). \cref{sec-elementary} is the main part of the paper. We describe elementary Lie automorphisms of $I(X,K)$ in terms of triples $(\0,\sg,c)$, where $\0$ is a bijection of $B=\{e_{xy} : x<y\}$, monotone on maximal chains in $X$ and satisfying a certain combinatorial condition which involves cycles in $X$, $\sg$ is a ``$1$-cocycle-looking'' map related to $\0$ and $c$ is a sequence of $|X|$ elements of $K$ such that $\sum c_i\ne 0$ (see \cref{vf-decomp-as-tau_0_sg_c}).
	
	\section{Preliminaries}\label{sec-prelim}

	\subsection{Posets}
	
	Let $(X,\le)$ be a partially ordered set (poset, for short) and let $x,y\in X$. The \emph{interval} from $x$ to $y$ will be denoted by $\lfloor x,y \rfloor$, that is, $\lfloor x,y \rfloor=\{z\in X : x\leq z\leq y\}$. If all the intervals of $X$ are finite, then $X$ is said to be \textit{locally finite}. A \textit{chain} in $X$ is a linearly ordered (under $\le$) subset of $X$. The \textit{length} of a finite chain $C\sst X$ is $|C|-1$. More generally, the \textit{length} of a finite non-empty subset $Y\sst X$, denoted by $l(Y)$, is the maximum length of chains $C\sst Y$. Obviously, $l(Y)\le |Y|-1$. A {\it walk} in $X$ is a sequence $x_0,x_1,\dots,x_m$ of elements of $X$, such that $x_i$ is comparable with $x_{i+1}$ and, moreover, $l(\lf x_i,x_{i+1}\rf)=1$ (if $x_i\le x_{i+1}$) or $l(\lf x_{i+1},x_i\rf)=1$ (if $x_{i+1}\le x_i$) for all $i=0,\dots,m-1$. A walk $x_0,x_1,\dots,x_m$ is said to be {\it closed} if $x_0=x_m$. A {\it path} is a walk in which $x_i\ne x_j$ for $i\ne j$. A {\it cycle} is a closed walk $x_0,x_1,\dots,x_m=x_0$ such that $m\ge 4$ and $x_i=x_j\impl \{i,j\}=\{0,m\}$  for $i\ne j$. The set $X$ is said to be {\it connected} if for any pair of $x,y\in X$ there is a path $x=x_0,\dots,x_m=y$. In this work $X$ will always be a connected finite poset.
	
	\subsection{Incidence algebras}
	
	Let $X$ be a locally finite poset and let $K$ be a field. The \emph{incidence algebra} $I(X,K)$ of $X$ over $K$ is the set 
	$I(X,K)=\{f:X\times X\to K : f(x,y)=0 \text{ if } x\nleq y\}$ endowed with the usual structure of a vector space over $K$ and the product defined by
	$$(fg)(x,y)=\sum_{x\leq t\leq y}f(x,t)g(t,y),$$
	for any $f, g\in I(X,K)$. Then $I(X,K)$ is a $K$-algebra with identity $\delta$ given by
	\begin{align*}
	\delta(x,y)=
	\begin{cases}
	1, & x=y,\\
	0, & x\ne y.
	\end{cases}
	\end{align*} 
	Moreover, if $X$ is finite, which is the case we deal with in this paper, $I(X,K)$ admits the standard basis $\{e_{xy} : x\leq y\}$, where
	\begin{align*}
	e_{xy}(u,v)=
	\begin{cases}
	1, & (u,v)=(x,y),\\
	0, & (u,v)\ne(x,y).
	\end{cases}
	\end{align*} 
	Indeed, $f=\displaystyle\sum_{x\le y}f(x,y)e_{xy}$ for all $f\in I(X,K)$. We will denote $e_{x}=e_{xx}$.
	
	Let $B=\{e_{xy} : x<y\}$. It is known (see~\cite[Theorem 4.2.5]{SpDo}) that the Jacobson radical of $I(X,K)$ is 
	$$
	J(I(X,K))=\{f\in I(X,K) : f(x,x)=0 \text{ for all } x\in X\}=\Span_K B.
	$$
	An element $f\in I(X,K)$ is said to be \emph{diagonal}, if $f(x,y)=0$ for $x\neq y$. Diagonal elements form a commutative subalgebra of $I(X,K)$ spanned by $\{e_{x} : x \in X\}$, which we denote by $D(X,K)$. Clearly, each $f\in I(X,K)$ can be uniquely written as $f=f_D+f_J$ with $f_D\in D(X,K)$ and $f_J\in J(I(X,K))$. 
	
	We denote by $\Aut(I(X,K))$ and $\laut(I(X,K))$ the groups of usual and Lie automorphisms of $I(X,K)$, respectively. We will also deal with the group of bijective $K$-linear maps of $I(X,K)$, denoted by $\mathrm{GL}(I(X,K))$, and with the group 
	$\inn_1(I(X,K))$ of the inner automorphisms of $I(X,K)$ consisting of the conjugations by $\bt\in I(X,K)$ with $\bt_D=\dl$.

	\section{The lower central series of $J(I(X,K))$}\label{sec-J_i}
	
	From now on $K$ will be a field and $X$ a finite connected poset of cardinality $n$.
	
	\begin{lemma}\label{comuta_e}
		An element $f\in I(X,K)$ commutes with $e_{uv}$ if and only if $f(x,u)=0$ for $x<u$, $f(v,y)=0$ for $y>v$ and $f(u,u)=f(v,v)$.
	\end{lemma}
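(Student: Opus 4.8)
The plan is to prove both implications at once by computing the products $fe_{uv}$ and $e_{uv}f$ as functions on $X\times X$ and comparing them entrywise. For an arbitrary pair $x\le y$ we have
$$(fe_{uv})(x,y)=\sum_{x\le t\le y}f(x,t)e_{uv}(t,y),$$
and since $e_{uv}(t,y)\ne 0$ forces $(t,y)=(u,v)$, the sum reduces to a single term: $(fe_{uv})(x,y)=f(x,u)$ when $y=v$ and $x\le u$, and $(fe_{uv})(x,y)=0$ otherwise. In the same way one gets $(e_{uv}f)(x,y)=f(v,y)$ when $x=u$ and $v\le y$, and $0$ otherwise.

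Next I would read off the equality $fe_{uv}=e_{uv}f$ as a finite family of scalar identities, one for each pair $(x,y)$ with $x\le y$, organised by the position of $(x,y)$ relative to $(u,v)$. At $(x,y)=(u,v)$ the identity is $f(u,u)=f(v,v)$; for $y=v$ with $x<u$ it is $f(x,u)=0$; for $x=u$ with $y>v$ it is $f(v,y)=0$; and for every other $(x,y)$ both sides are already $0$, so nothing is imposed. Hence $f$ commutes with $e_{uv}$ precisely when the three stated conditions hold, which gives both directions of the equivalence simultaneously.

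Since everything is a direct computation, there is no genuine obstacle; the only point needing a little attention is verifying that the case split is exhaustive and non-redundant --- in particular that the ``support'' conditions $\{y=v,\ x\le u\}$ and $\{x=u,\ v\le y\}$ of the two products meet exactly at $(u,v)$ (here one uses $u\le v$), so that the three conditions extracted are exactly the necessary and sufficient ones.
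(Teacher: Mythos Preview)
Your proof is correct and follows essentially the same approach as the paper: both compute $fe_{uv}$ and $e_{uv}f$ explicitly and compare, the paper via the basis expansion $f=\sum f(x,y)e_{xy}$ and you via pointwise evaluation, which are two ways of writing the same computation. Your case analysis spelling out why the supports intersect exactly at $(u,v)$ is a bit more detailed than the paper's ``whence the desired property follows,'' but the argument is the same.
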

	\begin{proof}
		We have
		\begin{align*}
		fe_{uv}=e_{uv}f & \Leftrightarrow \sum_{x\leq y}f(x,y)e_{xy}e_{uv}=\sum_{x\leq y}f(x,y)e_{uv}e_{xy}\\
		& \Leftrightarrow \sum_{x\le u\le v}f(x,u)e_{xv}=\sum_{u\le v\le y}f(v,y)e_{uy},
		\end{align*}
		whence the desired property of $f$ follows.
	\end{proof}
	
	We recall that the \emph{derived ideal} of a $K$-Lie algebra $L$ is 
	$$[L,L]=\Span_K\{[a,b] : a,b\in L\}.$$
	The sequence of Lie ideals
	$$L\supseteq [L,L]\supseteq [L,[L,L]]\supseteq \cdots $$
	is called \emph{the lower central series} of $L$. It is clear that each term of this series is invariant under any automorphism of $L$.
	
	Let $J_1$ be the derived ideal of $I(X,K)$ and
	$$
	J_m=[J_1,J_{m-1}]=\Span_K\{[f,g] : f\in J_1, g\in J_{m-1}\},\ m\ge 2.
	$$
	We thus have the following.
	\begin{proposition}\label{varphi_Jm}
		If $\varphi\in\laut(I(X,K))$, then $\varphi(J_m)=J_m$ for all $m\geq 1$.
	\end{proposition}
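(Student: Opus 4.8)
The plan is to reduce the whole statement to the single fact that $\varphi(J_1)=J_1$ and then finish by a one-line induction. First I would note that $J_1=[I(X,K),I(X,K)]$ is, by definition, the derived ideal of $L:=I(X,K)$, hence one of the terms of the lower central series of $L$; since every term of that series is invariant under any Lie automorphism of $L$ (as already observed above), this gives $\varphi(J_1)=J_1$. Consequently the restriction $\varphi|_{J_1}$ is a bijective Lie homomorphism of $J_1$ onto itself, i.e.\ a Lie automorphism of the Lie algebra $J_1$.

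Next I would observe that the descending chain $J_1\sst^{\!\!*}$, more precisely $J_1\supseteq J_2\supseteq J_3\supseteq\cdots$, is exactly the lower central series of $J_1$: we have $J_2=[J_1,J_1]$ and, in general, $J_m=[J_1,J_{m-1}]$. Applying the same invariance principle to the Lie automorphism $\varphi|_{J_1}$ of $J_1$ then yields $\varphi(J_m)=J_m$ for all $m\ge 1$. Spelled out, this is an induction on $m$: the case $m=1$ is the paragraph above, and for $m\ge 2$ one uses that $\varphi$ preserves brackets, so that
$$\varphi(J_m)=\varphi([J_1,J_{m-1}])=[\varphi(J_1),\varphi(J_{m-1})]=[J_1,J_{m-1}]=J_m,$$
invoking $\varphi(J_1)=J_1$ and the inductive hypothesis $\varphi(J_{m-1})=J_{m-1}$.

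There is no real obstacle in this argument; the only point to keep in mind is that for $m\ge 2$ the space $J_m$ is a term of the lower central series of $J_1$ — not of $L$ — since it is built from brackets with $J_1$ rather than with the whole algebra. This is precisely why the reduction has to pass through the intermediate equality $\varphi(J_1)=J_1$, which is what makes $\varphi|_{J_1}$ an automorphism of $J_1$ in the first place.
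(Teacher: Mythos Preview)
Your proof is correct and follows the same approach as the paper, which does not give an explicit argument but simply records the proposition as a consequence of the invariance of the lower central series under Lie automorphisms. Your remark that the $J_m$ for $m\ge 2$ form the lower central series of $J_1$ rather than of $I(X,K)$ itself makes explicit a point the paper leaves implicit.
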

	
	\begin{proposition}\label{J}
		The ideal $J_1$ coincides with $J(I(X,K))$.
	\end{proposition}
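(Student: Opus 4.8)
The plan is to prove the two inclusions $J_1\sst J(I(X,K))$ and $J(I(X,K))\sst J_1$ separately, and the statement is in fact elementary once one recalls the description of the radical quoted above.

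For the inclusion $J_1\sst J(I(X,K))$, I would use that the assignment $f\mapsto f_D$ is multiplicative: since $(fg)(x,x)=\sum_{x\le t\le x}f(x,t)g(t,x)=f(x,x)g(x,x)$ for every $x\in X$, we get $(fg)_D=f_Dg_D=g_Df_D=(gf)_D$ for all $f,g\in I(X,K)$, because $D(X,K)$ is commutative. Hence $[f,g]_D=0$ for every commutator, so $[f,g]\in\{h\in I(X,K):h(x,x)=0\text{ for all }x\in X\}$, which equals $J(I(X,K))$ by~\cite[Theorem 4.2.5]{SpDo}. As $J_1$ is spanned by such commutators, $J_1\sst J(I(X,K))$.

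For the reverse inclusion, I would use that $J(I(X,K))=\Span_K B$, so it suffices to check that each $e_{xy}$ with $x<y$ lies in $J_1$. The key computation is $[e_x,e_{xy}]=e_xe_{xy}-e_{xy}e_x=e_{xy}-0=e_{xy}$, where $e_xe_{xy}=e_{xy}$ and $e_{xy}e_x=e_{xy}e_{xx}=0$ since $y\ne x$. Thus every element of $B$ is a commutator, hence belongs to $J_1$, and therefore $\Span_K B=J(I(X,K))\sst J_1$. Combining the two inclusions yields $J_1=J(I(X,K))$.

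There is no genuine obstacle here: the only point requiring (routine) care is the bookkeeping for the products $e_xe_{xy}$ and $e_{xy}e_x$ of standard basis elements, handled directly from the multiplication rule of $I(X,K)$ or from \cref{comuta_e}. Note also that neither connectedness nor finiteness of $X$ is used in this argument.
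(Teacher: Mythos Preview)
Your proof is correct and follows essentially the same approach as the paper: both inclusions are proved exactly as you do, by computing $[f,g](x,x)=f(x,x)g(x,x)-g(x,x)f(x,x)=0$ for the first and $e_{xy}=[e_x,e_{xy}]$ for the second. Your phrasing via the multiplicativity of $f\mapsto f_D$ is a mild cosmetic variation of the same one-line computation.
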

	\begin{proof}
		Let $f,g\in I(X,K)$. For all $x\in X$, we have
		$$(fg-gf)(x,x)=f(x,x)g(x,x)-g(x,x)f(x,x)=0.$$
		Thus $[f,g]\in J(I(X,K))$.
		
		On the other hand, if $x<y$ in $X$, then
		$
		e_{xy}=e_{x}e_{xy}-e_{xy}e_{x}=[e_{x},e_{xy}]\in J_1.
		$	
	\end{proof}
	
	\begin{proposition}\label{J_m}
		Let $m$ be a positive integer. Then 
		$$
		J_m=\Span_K\{e_{xy} : l(\lfloor x,y \rfloor) \geq m\}=J(I(X,K))^m. 
		$$
		In particular, $J_{n}=J(I(X,K))^n=\{0\}$.
		It follows that $J_m$ is an ideal (and, therefore, a Lie ideal) of $I(X,K)$.
	\end{proposition}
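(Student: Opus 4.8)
The plan is to prove the chain of equalities in two stages; throughout write $S_m=\Span_K\{e_{xy} : l(\lfloor x,y \rfloor)\geq m\}$ for brevity. First I would establish the second equality $J(I(X,K))^m=S_m$ directly from the multiplication rule $e_{xy}e_{yz}=e_{xz}$ and $e_{xy}e_{uv}=0$ for $y\ne u$. A product $b_1b_2\cdots b_m$ of elements of $B=\{e_{xy} : x<y\}$ is either zero or equals $e_{x_0x_m}$ for a strictly increasing chain $x_0<x_1<\cdots<x_m$, hence lies in $S_m$ since then $l(\lfloor x_0,x_m \rfloor)\geq m$; conversely, if $l(\lfloor x,y \rfloor)\geq m$ there is a chain $x=z_0<z_1<\cdots<z_m=y$ inside $\lfloor x,y \rfloor$, so $e_{xy}=e_{z_0z_1}e_{z_1z_2}\cdots e_{z_{m-1}z_m}\in J(I(X,K))^m$. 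Since $J(I(X,K))=\Span_K B$ (recalled in \cref{sec-prelim}), this gives $J(I(X,K))^m=S_m$.

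Next I would prove $J_m=S_m$ by induction on $m$. For $m=1$ this is exactly \cref{J} together with the observation that $l(\lfloor x,y \rfloor)\geq 1\iff x<y$. Assume $J_{m-1}=S_{m-1}=J(I(X,K))^{m-1}$. The inclusion $J_m\subseteq S_m$ is immediate: for $f\in J_1=J(I(X,K))$ and $g\in J_{m-1}$ one has $[f,g]=fg-gf\in J(I(X,K))\cdot J(I(X,K))^{m-1}+J(I(X,K))^{m-1}\cdot J(I(X,K))\subseteq J(I(X,K))^m=S_m$. For the reverse inclusion, take a basis element $e_{xy}$ with $l(\lfloor x,y \rfloor)\geq m$ and choose a chain $x=z_0<z_1<\cdots<z_m=y$ in $\lfloor x,y \rfloor$. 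Since $m\geq 2$ we have $x<z_1<y$, so $e_{xz_1}\in J_1$ and $e_{z_1y}\in S_{m-1}=J_{m-1}$ (because $l(\lfloor z_1,y \rfloor)\geq m-1$), and
$$[e_{xz_1},e_{z_1y}]=e_{xz_1}e_{z_1y}-e_{z_1y}e_{xz_1}=e_{xy}-0=e_{xy}\in[J_1,J_{m-1}]=J_m,$$
which proves $S_m\subseteq J_m$.

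Finally, since $|X|=n$, every chain in $X$ has at most $n$ elements and hence length at most $n-1$, so no $e_{xy}$ satisfies $l(\lfloor x,y \rfloor)\geq n$; thus $J_n=S_n=\{0\}=J(I(X,K))^n$. The last assertion follows because $J_m=J(I(X,K))^m$ is a power of the two-sided ideal $J(I(X,K))$, hence a two-sided ideal of $I(X,K)$, and every associative two-sided ideal is automatically a Lie ideal.

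As for difficulty, there is essentially no serious obstacle. The only points that will require care are the combinatorial bookkeeping in identifying $J(I(X,K))^m$ with $S_m$ (keeping track of which products of basis elements vanish) and checking that the splitting $x<z_1<y$ of the chosen chain is legitimate, which needs $m\geq 2$ — precisely the reason the base case $m=1$ must be handled separately via \cref{J}.
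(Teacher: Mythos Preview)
Your proof is correct and follows essentially the same approach as the paper: both arguments establish the three-way equality by showing that products of $m$ basis elements of $B$ land in $S_m$, that each $e_{xy}$ with a long enough chain can be written as a commutator of a short piece with an inductively known piece, and that $J_m\subseteq J(I(X,K))^m$ since commutators lie in the product. The only cosmetic differences are organizational---you first prove $J(I(X,K))^m=S_m$ outright and then do the induction for $J_m=S_m$, whereas the paper runs the three inclusions as a cycle; and you split the chain at $z_1$ (so $e_{xz_1}\in J_1$, $e_{z_1y}\in J_{m-1}$) while the paper splits at the penultimate vertex (so $e_{xt_{k+1}}\in J_k$, $e_{t_{k+1}y}\in J_1$)---but these are the same idea.
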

	\begin{proof}
		The statement is obvious for $m=1$ by \cref{J} so let $m>1$.
		
		Let $e_{u_1v_1},\ldots, e_{u_mv_m}\in J(I(X,K))$, $i=1,\ldots,m$.  If $e_{u_1v_1}\cdots e_{u_mv_m}\neq 0$, then $v_i=u_{i+1}$, $i=1,\ldots, m-1$. In this case, $u_1<u_2<\cdots<u_m<v_m$ and $e_{u_1v_1}\cdots e_{u_mv_m}=e_{u_1v_m}$ with $l(\lfloor u_1,v_m\rfloor)\geq m$. Therefore,
		$$
		J(I(X,K))^m\subseteq \Span_K\{e_{xy} : l(\lfloor x,y \rfloor) \geq m\}.
		$$
		
		Assume that $\Span_K\{e_{xy} : l(\lfloor x,y \rfloor) \geq k\}\subseteq J_k$ for some positive integer $k$ and let $x<y$ such that $l(\lfloor x,y\rfloor)\geq k+1$. We will show that $e_{xy}\in J_{k+1}$. Let $x=t_1<\cdots<t_{k+2}=y$ be a chain in $\lfloor x,y \rfloor$ of length $k+1$. Then
		$$e_{xy}=e_{xt_{k+1}}e_{t_{k+1}y}-e_{t_{k+1}y}e_{xt_{k+1}}=[e_{xt_{k+1}},e_{t_{k+1}y}].$$
		Observe that $e_{t_{k+1}y}\in J(I(X,K))$ and $l(\lfloor x,t_{k+1} \rfloor)\geq k$. By the induction hypothesis, $e_{xt_{k+1}}\in J_k$ and, therefore, $e_{xy}\in J_{k+1}$, as we wanted. Thus,
		$$
		\Span_K\{e_{xy} : l(\lfloor x,y \rfloor) \geq m\}\subseteq J_m.
		$$
		
		Finally, let $J_k\subseteq J(I(X,K))^k$ for some positive integer $k$. If $f\in J(I(X,K))$ and $g\in J_k$, then $[f,g]\in J(I(X,K))^{k+1}$. Thus,	$J_m\subseteq J(I(X,K))^m$.	
	\end{proof}
	
	\cref{J_m} implies that the lower central series of $(J(I(X,K)),[\ ,\ ])$ is
	$$J(I(X,K))\supseteq J(I(X,K))^2\supseteq \cdots \supseteq J(I(X,K))^{n-1}\supseteq J(I(X,K))^n=\{0\}.$$
	
	\begin{proposition}\label{Z}
		The center $Z$ of $J(I(X,K))$ is
		$$Z=\Span_K\{e_{xy} : x<y, x \text{ is minimal and } y \text{ is maximal}\}.$$
	\end{proposition}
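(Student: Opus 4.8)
The plan is to prove the two inclusions separately, in both cases exploiting \cref{comuta_e}. Since $J(I(X,K))=\Span_K B$ and the commutator is $K$-bilinear, an element $f\in J(I(X,K))$ is central in the Lie algebra $J(I(X,K))$ if and only if it commutes (in the associative sense, since $[f,g]=fg-gf$) with every $e_{uv}\in B$, i.e.\ with every pair $u<v$ in $X$. By \cref{comuta_e}, this amounts to requiring, for each such pair, that $f(t,u)=0$ whenever $t<u$, that $f(v,t)=0$ whenever $t>v$, and that $f(u,u)=f(v,v)$; the last condition is automatic here because $f\in J(I(X,K))$ has zero diagonal.

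For the inclusion $\supseteq$, I would fix $x<y$ with $x$ minimal and $y$ maximal and verify these conditions for $f=e_{xy}$ against an arbitrary $e_{uv}\in B$. A nonzero value $e_{xy}(t,u)$ with $t<u$ would force $(t,u)=(x,y)$, hence $u=y$; but then $u<v$ would contradict maximality of $y$, so $e_{xy}(t,u)=0$. Dually, minimality of $x$ kills all values $e_{xy}(v,t)$ with $t>v$, and the diagonal condition is trivial. Hence $e_{xy}\in Z$, and by linearity so is every element of $\Span_K\{e_{xy} : x<y,\ x\text{ minimal},\ y\text{ maximal}\}$.

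For the inclusion $\subseteq$, I would take $f\in Z$, write $f=\sum_{x<y}f(x,y)e_{xy}$, and suppose $f(x_0,y_0)\ne 0$ for some $x_0<y_0$. If $y_0$ were not maximal, choose $v>y_0$; then $f$ commutes with $e_{y_0v}\in B$, and the first condition of \cref{comuta_e} with $u=y_0$ forces $f(x_0,y_0)=0$, a contradiction, so $y_0$ is maximal. Symmetrically, applying the second condition of \cref{comuta_e} with $v=x_0$ and some $u<x_0$ shows that $x_0$ must be minimal. Thus $f$ is a linear combination of elements $e_{xy}$ with $x$ minimal and $y$ maximal, which is exactly the claimed description. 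I do not anticipate any genuine obstacle: the only point to keep in mind is that centrality in $J(I(X,K))$ need only be tested on the spanning set $B$, so that \cref{comuta_e} applies directly and its diagonal hypothesis $f(u,u)=f(v,v)$ becomes vacuous.
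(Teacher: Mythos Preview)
Your proof is correct and follows essentially the same approach as the paper's: both directions ultimately rest on \cref{comuta_e}, with the diagonal condition trivially satisfied since $f\in J(I(X,K))$. The only cosmetic difference is that the paper verifies the inclusion $\supseteq$ by directly expanding $[e_{xy},e_{uv}]=\delta_{yu}e_{xv}-\delta_{vx}e_{uy}$ rather than quoting \cref{comuta_e}, while you invoke the lemma uniformly for both inclusions; the logical content is identical.
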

	\begin{proof}
		Let $u,v,x,y \in X$ such that $u<v$, $x<y$, $x$ is minimal and $y$ is maximal. Then $[e_{xy},e_{uv}]=\delta_{yu}e_{xv}-\delta_{vx}e_{uy}$.
		If $y=u$, then $y<v$, which contradicts the maximality of $y$. If $v=x$, then $u<x$, which contradicts the minimality of $x$. Thus,
		$[e_{xy},e_{uv}]=0$ and, therefore, $e_{xy}\in Z$.
		
		Conversely, let $g=\sum_{x<y}g(x,y)e_{xy}\in Z$. Assume that $x$ is not minimal. Then there exists $u<x$. Using $ge_{ux}=e_{ux}g$ we conclude that $g(x,y)=0$ for all $y>x$ by \cref{comuta_e}. Similarly, if $y$ is not maximal, $g(x,y)=0$ for all $x<y$.
	\end{proof}

	\section{Associative and Lie ideals of $I(X,K)$}\label{sec-ideals}
	
	We begin this section by presenting some properties of associative and Lie ideals of $I(X,K)$ contained in $J(I(X,K))$.
	
	\begin{lemma}\label{exytala}
		Let $I$ be a Lie ideal of $I(X,K)$ such that $I\subseteq J(I(X,K))$. If $f\in I$ and $f(x,y)\neq 0$, then $e_{xy}\in I$.
	\end{lemma}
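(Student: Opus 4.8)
The plan is to use the fact that a Lie ideal $I$ of $I(X,K)$ is closed under the operator $\mathrm{ad}_{e_z}\colon g\mapsto[e_z,g]$ for every $z\in X$, combined with the elementary observation that these operators act diagonally on the standard basis: a direct check gives $\mathrm{ad}_{e_z}(e_{uv})=(\dl_{zu}-\dl_{zv})e_{uv}$ for all $u\le v$. Hence distinct basis elements $e_{uv}$ lie in distinct common eigenspaces of the commuting family $\{\mathrm{ad}_{e_z}\}_{z\in X}$, while $D(X,K)$ is exactly the common zero-eigenspace; so any subspace stable under all the $\mathrm{ad}_{e_z}$ which is contained in $J(I(X,K))=\Span_K B$ must be the span of a subset of $B$. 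Applying this to $I$ would already yield the statement. I would, however, prefer to isolate the coefficient $f(x,y)$ directly by means of a double commutator, which keeps the proof self-contained.

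Concretely, first note that since $f\in I\sst J(I(X,K))$ we have $f(x,x)=0$, so $f(x,y)\ne 0$ forces $x<y$. Then I would compute, using $e_ye_{uv}=\dl_{yu}e_{yv}$ and $e_{uv}e_y=\dl_{vy}e_{uy}$,
\[
[e_y,f]=\sum_{v>y}f(y,v)e_{yv}-\sum_{u<y}f(u,y)e_{uy},
\]
and then apply $\mathrm{ad}_{e_x}$ to this element. Because $x<y$, in $e_x[e_y,f]$ only the summand $u=x$ survives and $\dl_{xy}=0$, so $e_x[e_y,f]=-f(x,y)e_{xy}$; and $[e_y,f]e_x=0$ since no $v>y$ equals $x$ and $\dl_{yx}=0$. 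Therefore $[e_x,[e_y,f]]=-f(x,y)e_{xy}$, and as $e_x,e_y\in I(X,K)$ and $I$ is a Lie ideal, this element lies in $I$. Dividing by the nonzero scalar $-f(x,y)$ gives $e_{xy}\in I$.

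I do not expect a genuine obstacle here: the proof is short, and the only point requiring attention is the bookkeeping of which terms survive in the two commutators, in particular the use of the \emph{strict} inequality $x<y$ — and this is precisely where the hypothesis $I\sst J(I(X,K))$ is used, since it is what rules out $x=y$ (for which the computation above would collapse). As an alternative one could run the argument in associative form, $[[e_x,f],e_y]=e_xfe_y+e_yfe_x=e_xfe_y=f(x,y)e_{xy}$, using $e_xe_y=e_ye_x=0$ and $y\not\le x$.
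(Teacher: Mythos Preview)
Your proof is correct and follows essentially the same approach as the paper: both isolate the coefficient $f(x,y)$ by a double commutator with $e_x$ and $e_y$, the paper computing $[[e_x,f],e_y]=f(x,y)e_{xy}$ while you compute the equivalent $[e_x,[e_y,f]]=-f(x,y)e_{xy}$ (and you even note the paper's version as your alternative). The only difference is cosmetic.
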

	\begin{proof}
		Let $f\in I$ such that $f(x,y)\neq 0$. Then $x<y$ since $I\subseteq J(I(X,K))$. Therefore, $f(x,y)e_{xy}=[[e_x,f],e_y]\in I$, whence $e_{xy}\in I$.
	\end{proof}
	
	\begin{corollary}\label{exytalamemo}
		Let $I$ be a Lie ideal of $I(X,K)$ such that $I\subseteq J(I(X,K))$. Then
		$I=\Span_K\{e_{xy} : e_{xy} \in I\}.$\footnote{For an associative ideal this is well-known (see~\cite[Lemma 3.1]{Doubilet-Rota-Stanley72}).}
	\end{corollary}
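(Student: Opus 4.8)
The plan is to prove the two inclusions separately, the nontrivial one being an immediate consequence of \cref{exytala}.

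First I would observe that the inclusion $\Span_K\{e_{xy} : e_{xy}\in I\}\sst I$ is automatic, since $I$ is in particular a $K$-subspace of $I(X,K)$ and hence contains every $K$-linear combination of its own elements $e_{xy}$.

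For the reverse inclusion, take an arbitrary $f\in I$. Since $I\sst J(I(X,K))$, the diagonal part $f_D$ vanishes, so $f=f_J=\sum_{x<y}f(x,y)e_{xy}$. For each pair $x<y$ with $f(x,y)\ne 0$, \cref{exytala} yields $e_{xy}\in I$; the remaining $e_{xy}$ have coefficient $0$ and so do not contribute to the sum. Therefore $f$ is a $K$-linear combination of elements $e_{xy}$ that lie in $I$, i.e. $f\in\Span_K\{e_{xy} : e_{xy}\in I\}$. Combining the two inclusions gives the claimed equality.

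There is essentially no obstacle here: the only point that needs to be invoked is that membership in $J(I(X,K))$ forces $f$ to be supported off the diagonal, which is exactly what allows \cref{exytala} to be applied coefficient-by-coefficient. (The footnote's reference to the associative case is only for comparison; the Lie-ideal statement follows the same pattern, with \cref{exytala} playing the role of the classical observation for associative ideals.)
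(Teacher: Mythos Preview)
Your proof is correct and is exactly the argument the paper intends: the corollary is stated without proof, as an immediate consequence of \cref{exytala}, and your two-inclusion argument is precisely the natural unpacking of that.
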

	
	Given a subset $S$ of $I(X,K)$, we will denote by $\langle S\rangle$ (resp. $\langle S\rangle_L$) the ideal (resp. Lie ideal) of $I(X,K)$ generated by $S$.
	
	\begin{lemma}\label{<e_xy>-description}
		For all $x\le y$ one has $\langle e_{xy}\rangle=\Span_K\{e_{uv}: u\le x\le y\le v\}$.
	\end{lemma}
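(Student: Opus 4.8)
The plan is to establish the two inclusions separately, in each case by a direct computation with the standard basis and the multiplication rule $e_{ab}e_{cd}=\delta_{bc}e_{ad}$, which holds whenever both factors are standard basis elements (and which in that case automatically produces a standard basis element, since $a\le b=c\le d$ forces $a\le d$).

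First I would prove $\Span_K\{e_{uv}: u\le x\le y\le v\}\subseteq\langle e_{xy}\rangle$. Fix $u\le x\le y\le v$. Then $u,x,y,v$ lie on a chain, so $e_{ux}$, $e_{xy}$, $e_{yv}$ are all in the standard basis, and $e_{ux}e_{xy}e_{yv}=e_{uy}e_{yv}=e_{uv}$. Since $\langle e_{xy}\rangle$ is a two-sided ideal, $e_{uv}=e_{ux}\cdot e_{xy}\cdot e_{yv}\in\langle e_{xy}\rangle$, and passing to $K$-linear combinations gives the claimed inclusion.

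For the reverse inclusion, I would use that $I(X,K)$ is unital, so $\langle e_{xy}\rangle=I(X,K)\,e_{xy}\,I(X,K)=\Span_K\{f\,e_{xy}\,g : f,g\in I(X,K)\}$; expanding $f$ and $g$ in the standard basis, it suffices to show that $e_{pq}\,e_{xy}\,e_{rs}$ lies in $\Span_K\{e_{uv}: u\le x\le y\le v\}$ for all basis elements $e_{pq},e_{rs}$. By the multiplication rule, $e_{pq}\,e_{xy}\,e_{rs}=\delta_{qx}\delta_{yr}\,e_{ps}$, which is $0$ unless $q=x$ and $r=y$; in that remaining case it equals $e_{ps}$ with $p\le q=x$ and $y=r\le s$, hence $p\le x\le y\le s$ (and in particular $p\le s$, so $e_{ps}$ really is a basis element). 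Thus every generator $f\,e_{xy}\,g$ of $\langle e_{xy}\rangle$ lies in $\Span_K\{e_{uv}: u\le x\le y\le v\}$, which completes the proof.

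I do not expect a genuine obstacle here: the argument is a short basis manipulation. The only points requiring a little care are keeping track of which $\delta$'s survive in the triple products and checking that the surviving index inequalities are mutually consistent (so that the resulting $e_{ps}$ is a legitimate standard basis element); both are immediate. As an aside, one could instead observe that $\Span_K\{e_{uv}: u\le x\le y\le v\}$ is visibly a two-sided ideal containing $e_{xy}$, and is contained in any ideal containing $e_{xy}$ by the computation in the second paragraph, but the direct double inclusion above is the cleanest write-up.
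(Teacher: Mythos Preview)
Your proof is correct and follows essentially the same approach as the paper's: both directions are established by the identities $e_{uv}=e_{ux}e_{xy}e_{yv}$ for $u\le x\le y\le v$ and $e_{pq}e_{xy}e_{rs}=\delta_{qx}\delta_{yr}e_{ps}$. The paper's write-up is simply more terse, omitting the explicit remark that $\langle e_{xy}\rangle=I(X,K)\,e_{xy}\,I(X,K)$ in a unital algebra.
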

	\begin{proof}
		If $u\le x\le y\le v$, then $e_{uv}=e_{ux}e_{xy}e_{yv}$. Conversely, the product $e_{ab}e_{xy}e_{cd}$ is nonzero if and only if $b=x$ and $y=c$, in which case it equals $e_{ad}$ with $a\le x\le y\le d$.
	\end{proof}
	
	\begin{lemma}\label{f(uv)-ne-0-for-x<=u<v<=y}
		Let $e_{xy}\in \langle S\rangle$, where $S\subseteq J(I(X,K))$. Then there are $f\in S$ and $x\leq u<v\leq y$ such that $f(u,v)\neq 0$.
	\end{lemma}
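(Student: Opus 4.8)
The idea is to describe the two-sided ideal $\langle S\rangle$ explicitly as the $K$-span of a distinguished set of standard basis elements, and then to read off the conclusion from the linear independence of that basis. Since ideal generation is additive, $\langle S\rangle=\sum_{f\in S}\langle f\rangle$, so it suffices to understand $\langle f\rangle$ for a single $f\in S$. For such an $f$ and any $u,v\in X$ one has $e_u f e_v=f(u,v)\,e_{uv}$, which is immediate from $e_u e_{ab} e_v=\delta_{ua}\delta_{bv}e_{uv}$ and linearity; hence $e_{uv}\in\langle f\rangle$ whenever $f(u,v)\ne0$. On the other hand $f=\sum_{u<v}f(u,v)e_{uv}$, so $f$ itself lies in the ideal generated by those $e_{uv}$. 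Therefore
\[
\langle f\rangle=\sum_{\substack{u<v\\ f(u,v)\ne0}}\langle e_{uv}\rangle,\qquad\text{and consequently}\qquad
\langle S\rangle=\sum_{f\in S}\ \sum_{\substack{u<v\\ f(u,v)\ne0}}\langle e_{uv}\rangle.
\]

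Next I would invoke \cref{<e_xy>-description}, which gives $\langle e_{uv}\rangle=\Span_K\{e_{pt}: p\le u<v\le t\}$ for each relevant pair $(u,v)$. Substituting this into the displayed formula shows that $\langle S\rangle$ is the $K$-span of the set
\[
\mathcal E=\{e_{pt}: \text{there exist } f\in S \text{ and } u,v\in X \text{ with } p\le u<v\le t \text{ and } f(u,v)\ne 0\}.
\]
Now $\mathcal E$ is a subset of the standard basis $\{e_{pt}:p\le t\}$ of $I(X,K)$, hence linearly independent, and an element of $I(X,K)$ lies in $\Span_K\mathcal E$ exactly when all of its nonzero coordinates in the standard basis are indexed by pairs $(p,t)$ with $e_{pt}\in\mathcal E$. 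Applying this to the hypothesis $e_{xy}\in\langle S\rangle=\Span_K\mathcal E$ forces $e_{xy}\in\mathcal E$, i.e. there exist $f\in S$ and $u,v\in X$ with $x\le u<v\le y$ and $f(u,v)\ne0$, which is the assertion.

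The argument is essentially bookkeeping, so I do not anticipate a genuine obstacle; the one place that requires care is the final step, where one must distinguish ``lies in the span of certain basis vectors'' from ``equals one of them'' — this inference is legitimate precisely because $\mathcal E$ consists of distinct standard basis vectors. (An alternative, equally routine route avoids \cref{<e_xy>-description} altogether: expand a general element $\sum_i a_i f_i b_i$ of $\langle S\rangle$ in the standard basis, using $e_{pq}e_{ab}e_{rs}=\delta_{qa}\delta_{br}e_{ps}$, to see directly that every basis element occurring has index of the required form, and conclude as above.)
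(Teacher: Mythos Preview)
Your proof is correct. The paper takes the direct computational route you mention as an alternative at the end: it writes $e_{xy}=\sum_i g_i f_i h_i$ with $f_i\in S$, evaluates both sides at $(x,y)$, and observes that
\[
1=e_{xy}(x,y)=\sum_i\sum_{x\le u<v\le y} g_i(x,u)f_i(u,v)h_i(v,y),
\]
which forces some $f_i(u,v)\ne 0$ with $x\le u<v\le y$ (the strict inequality $u<v$ coming from $f_i\in J(I(X,K))$). Your main argument instead first identifies $\langle S\rangle$ explicitly as $\Span_K\mathcal E$ via \cref{<e_xy>-description}, and then reads off the conclusion from linear independence of the standard basis. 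This is a bit more work but yields the stronger intermediate statement that $\langle S\rangle$ is spanned by a determined subset of $B$; the paper's argument is a one-line contradiction that extracts only what is needed. Both are routine and the difference is largely stylistic.
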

	\begin{proof}
		Write $e_{xy}=\sum_i g_if_ih_i$ with $g_i,h_i\in I(X,K)$ and $f_i\in S$. If $f_i(u,v)=0$ for all $x\leq u<v\leq y$, then $1=e_{xy}(x,y)=\sum_i\sum_{x\leq u<v\leq y}g_i(x,u)f_i(u,v)h_i(v,y)=0$, a contradiction.
	\end{proof}
	
	\begin{corollary}\label{ideal}
		If $S\subseteq J(I(X,K))$, then $\langle S\rangle=\langle S\rangle_L$. Therefore, every Lie ideal of $I(X,K)$ contained in $J(I(X,K))$ is an ideal.
	\end{corollary}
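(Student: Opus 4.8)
The plan is to establish the two inclusions $\langle S\rangle_L\sst\langle S\rangle$ and $\langle S\rangle\sst\langle S\rangle_L$ separately. The first is immediate, since every commutator $[a,b]=ab-ba$ already lies in the associative ideal $\langle S\rangle$; so all the content is in the reverse inclusion.

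The key preliminary remark is that, because $S\sst J(I(X,K))$ and $J(I(X,K))$ is a two-sided ideal, both $\langle S\rangle$ and $\langle S\rangle_L$ are contained in $J(I(X,K))$; moreover $\langle S\rangle$, being an ideal, is in particular a Lie ideal. Hence \cref{exytalamemo} applies to each of them, showing that both $\langle S\rangle$ and $\langle S\rangle_L$ are $K$-spanned by the standard basis elements $e_{xy}$ they contain. Consequently it suffices to prove that every $e_{xy}\in\langle S\rangle$ already lies in $\langle S\rangle_L$.

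So I would fix $e_{xy}\in\langle S\rangle$. By \cref{f(uv)-ne-0-for-x<=u<v<=y} there exist $f\in S$ and $x\le u<v\le y$ with $f(u,v)\ne 0$. Since $f\in S\sst\langle S\rangle_L$ and $\langle S\rangle_L$ is a Lie ideal contained in $J(I(X,K))$, \cref{exytala} gives $e_{uv}\in\langle S\rangle_L$. It then remains to ``widen'' $e_{uv}$ up to $e_{xy}$, which is achieved by a short commutator computation: adopting the convention $e_{ww}=e_w$, one checks that $[e_{xu},e_{uv}]=e_{xv}$ and then $[e_{xv},e_{vy}]=e_{xy}$, so that $e_{xy}=[[e_{xu},e_{uv}],e_{vy}]\in\langle S\rangle_L$. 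This yields $\langle S\rangle\sst\langle S\rangle_L$, hence equality.

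Finally, for the closing assertion, let $I$ be a Lie ideal of $I(X,K)$ with $I\sst J(I(X,K))$. Since $I$ is already a Lie ideal we have $\langle I\rangle_L=I$, so applying the equality just proved with $S=I$ gives $\langle I\rangle=\langle I\rangle_L=I$; that is, $I$ coincides with the ideal it generates and is therefore an ideal of $I(X,K)$. The only mildly delicate point in the whole argument is the bookkeeping in the ``widening'' commutator computation, where the degenerate cases $x=u$ and $v=y$ (in which one of the factors is a diagonal idempotent) must be handled separately; everything else is a direct application of the lemmas already at hand.
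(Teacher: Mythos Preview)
Your proof is correct and follows essentially the same route as the paper's: both reduce via \cref{exytalamemo} to showing that $e_{xy}\in\langle S\rangle$ implies $e_{xy}\in\langle S\rangle_L$, locate a suitable $f\in S$ and $x\le u<v\le y$ via \cref{f(uv)-ne-0-for-x<=u<v<=y}, and then recover $e_{xy}$ by a double commutator. The only cosmetic difference is that the paper computes $[[e_{xu},f],e_{vy}]=f(u,v)e_{xy}$ directly, whereas you first extract $e_{uv}$ from $f$ via \cref{exytala} and then compute $[[e_{xu},e_{uv}],e_{vy}]=e_{xy}$; this extra step is harmless but not needed.
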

	\begin{proof}
		Clearly, $\langle S\rangle_L\subseteq\langle S\rangle$. To prove that $\langle S\rangle\subseteq \langle S\rangle_L$, we only need to show by \cref{exytalamemo} that  $e_{xy}\in \langle S\rangle_L$ whenever $e_{xy}\in \langle S\rangle$. Let $e_{xy}\in \langle S\rangle$. By \cref{f(uv)-ne-0-for-x<=u<v<=y} there are $f\in S$ and $x\leq u<v\leq y$ such that $f(u,v)\neq 0$. Then $e_{xy}=f(u,v)^{-1}[[e_{xu},f],e_{vy}]\in \langle S\rangle_L$.
	\end{proof}

	\begin{proposition}\label{dim}
		Let $I\subseteq J_m$ be an ideal of $I(X,K)$ such that $\dim\dfrac{I}{I\cap J_{m+1}}=1$. Then there exists a unique $e_{xy}\in J_m-J_{m+1}$ such that any $f\in I-J_{m+1}$ is of the form $f=f(x,y)e_{xy}+j_{xy}$, where $j_{xy}\in J_{m+1}$.
	\end{proposition}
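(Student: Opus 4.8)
The plan is to reduce everything to the standard-basis descriptions already available. Since $I\sst J_m\sst J(I(X,K))$, \cref{exytalamemo} gives $I=\Span_K\{e_{uv}: e_{uv}\in I\}$, and \cref{J_m} gives $J_k=\Span_K\{e_{uv}: l(\lf u,v\rf)\ge k\}$ for every $k$. Because all of these are sets of distinct standard basis vectors, an element of $I\cap J_{m+1}$, expanded in the standard basis, has its support inside $\{e_{uv}\in I: l(\lf u,v\rf)\ge m+1\}$, and conversely each such $e_{uv}$ lies in $I\cap J_{m+1}$; hence $I\cap J_{m+1}=\Span_K\{e_{uv}\in I: l(\lf u,v\rf)\ge m+1\}$. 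Consequently the images of the vectors $e_{uv}\in I$ with $l(\lf u,v\rf)=m$ (exactly $m$) form a basis of $I/(I\cap J_{m+1})$. As this quotient is one-dimensional, there is exactly one such vector; call it $e_{xy}$. Then $l(\lf x,y\rf)=m$, so $e_{xy}\in I$ and $e_{xy}\in J_m-J_{m+1}$, which settles existence. (Note also that $\dim\frac{I}{I\cap J_{m+1}}=1$ forces $I\not\sst J_{m+1}$, so $I-J_{m+1}\ne\emptyset$.)

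Next I would establish the decomposition. Take any $f\in I$ and write $f=\sum_{e_{uv}\in I}f(u,v)e_{uv}$. Splitting this sum according to whether $l(\lf u,v\rf)=m$ or $l(\lf u,v\rf)\ge m+1$, the first part reduces to the single term $f(x,y)e_{xy}$ by the previous paragraph, while the second part is an element $j_{xy}\in I\cap J_{m+1}\sst J_{m+1}$. Thus $f=f(x,y)e_{xy}+j_{xy}$ with $j_{xy}\in J_{m+1}$. In particular, if $f\in I-J_{m+1}$ then $f(x,y)\ne 0$, since otherwise $f=j_{xy}\in J_{m+1}$, a contradiction.

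Finally, for uniqueness, suppose $e_{x'y'}\in J_m-J_{m+1}$ also has the stated property. Apply it to $f=e_{xy}$, which lies in $I-J_{m+1}$: we get $e_{xy}=e_{xy}(x',y')e_{x'y'}+j$ with $j\in J_{m+1}$. If $(x',y')\ne(x,y)$, then $e_{xy}(x',y')=0$, forcing $e_{xy}=j\in J_{m+1}$, which is absurd. Hence $(x',y')=(x,y)$, as required.

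The argument is essentially bookkeeping once the standard-basis descriptions of $I$ and of the $J_k$ are in place; the only point needing a little care is the claim $I\cap J_{m+1}=\Span_K\{e_{uv}\in I: l(\lf u,v\rf)\ge m+1\}$, which I would justify via the linear independence of the standard basis. No genuine obstacle is expected beyond this.
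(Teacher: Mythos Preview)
Your proof is correct and follows essentially the same approach as the paper: both arguments identify the unique $e_{xy}\in I$ with $l(\lf x,y\rf)=m$ via the one-dimensionality of $I/(I\cap J_{m+1})$ and then read off the decomposition. The paper's version is slightly more compressed---it picks an $f\in I-J_{m+1}$, extracts $e_{xy}$ from a nonzero coefficient using \cref{exytala}, and infers uniqueness directly from the dimension hypothesis---whereas you go through \cref{exytalamemo} to first compute $I\cap J_{m+1}$ explicitly; but the substance is the same.
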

	\begin{proof}
		Let $f\in I-J_{m+1}$. Then there is $e_{xy}\in J_m-J_{m+1}$ such that $f(x,y)\neq 0$. By \cref{exytala}, $e_{xy}\in I$. Since $\dim\dfrac{I}{I\cap J_{m+1}}=1$, $e_{xy}$ is the only $e_{uv}\in J_m-J_{m+1}$ that belongs to $I$. Thus, $f-f(x,y)e_{xy}\in J_{m+1}$. 
	\end{proof}

	\begin{lemma}\label{<e_xy><e_uv>-is-zero}
		Let $x\le y$ and $u\le v$. If $y\not\le u$, then $\langle e_{xy}\rangle\langle e_{uv}\rangle=\{0\}$. 
	\end{lemma}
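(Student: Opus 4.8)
The plan is to reduce the statement to a one-line computation with standard basis elements, using \cref{<e_xy>-description}. First I would apply that lemma to both factors, writing $\langle e_{xy}\rangle=\Span_K\{e_{ab}: a\le x\le y\le b\}$ and $\langle e_{uv}\rangle=\Span_K\{e_{cd}: c\le u\le v\le d\}$. By bilinearity of the product, $\langle e_{xy}\rangle\langle e_{uv}\rangle$ is then spanned by the products $e_{ab}e_{cd}$ with $a\le x\le y\le b$ and $c\le u\le v\le d$, so it suffices to show each such product is zero.

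Next I would recall the multiplication rule for the standard basis, $e_{ab}e_{cd}=\delta_{bc}\,e_{ad}$, so that $e_{ab}e_{cd}\ne 0$ forces $b=c$. In that case the chain of inequalities $y\le b=c\le u$ yields $y\le u$, contradicting the hypothesis $y\not\le u$. Hence every spanning product vanishes, and therefore $\langle e_{xy}\rangle\langle e_{uv}\rangle=\{0\}$.

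There is essentially no obstacle here: the argument is purely combinatorial once \cref{<e_xy>-description} is in hand. The only point requiring a little care is the bookkeeping of which products of basis elements actually occur — i.e. making explicit that $\langle e_{xy}\rangle\langle e_{uv}\rangle$ is, by definition, the $K$-span of all products $fg$ with $f\in\langle e_{xy}\rangle$, $g\in\langle e_{uv}\rangle$, and hence is the span of the products of the basis elements described above.
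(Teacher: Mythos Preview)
Your proof is correct and follows essentially the same approach as the paper: both reduce, via \cref{<e_xy>-description}, to showing that a nonzero product would force $y\le c\le u$ for some intermediate element $c$, contradicting $y\not\le u$. The only cosmetic difference is that you reduce to products of basis elements by bilinearity, whereas the paper works with arbitrary $f\in\langle e_{xy}\rangle$, $g\in\langle e_{uv}\rangle$ and evaluates $(fg)(a,b)$ directly.
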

	\begin{proof}
		Let $f\in\langle e_{xy}\rangle$ and $g\in\langle e_{uv}\rangle$.
		If there are $a\le b$ such that
		\begin{align*}
		0\ne (fg)(a,b)=\sum_{a\le c\le b}f(a,c)g(c,b),
		\end{align*}
		then $f(a,c)\ne 0$ and $g(c,b)\ne 0$ for some $a\le c\le b$. It follows by \cref{<e_xy>-description} that $a\le x\le y\le c$ and $c\le u\le v\le b$, whence $y\le u$, a contradiction.
	\end{proof}

	\section{The decomposition of $\laut(I(X,K))$ into a semidirect product}\label{sec-LAut-decomp}
	
	Let $L_i=\Span_K\{e_{xy}:e_{xy}\in J_i - J_{i+1}\}= \Span_K \{e_{xy} : l(\lfloor x,y\rfloor) = i\}$, for any integer $i\ge 0$, where $J_0:=I(X,K)$. Then $J_i=\bigoplus_{k\ge i} L_k$ as $K$-vector spaces, $i\ge 0$. 
	\begin{definition}
		Given $\vf\in\laut(I(X,K))$, set $\wtl\vf:I(X,K)\to I(X,K)$ to be the $K$-linear map defined as follows: for any $e_{xy}$, if $e_{xy}\in L_i$, then
		\begin{align*}
		\vf(e_{xy})=\wtl\vf(e_{xy})+j_{xy},
		\end{align*}
		where $\wtl\vf(e_{xy})\in L_i$ and $j_{xy}\in J_{i+1}$.
	\end{definition} 
	Clearly, $\wtl\vf$ is well-defined in view of \cref{varphi_Jm}.
	
	\begin{proposition}\label{wtl-homo}
		The map $\vf\mapsto\wtl\vf$ is a group homomorphism from $\laut(I(X,K))$ to $\mathrm{GL}(I(X,K))$.
	\end{proposition}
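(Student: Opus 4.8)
\emph{Plan of proof.} The plan is to recognize $\wtl\vf$ as the direct sum, over $i\ge 0$, of the $K$-linear automorphisms that $\vf$ induces on the graded pieces $J_i/J_{i+1}$, and to read off both the target and the multiplicativity from this description. Since $J_i=L_i\oplus J_{i+1}$ for every $i\ge 0$, the composite $L_i\hookrightarrow J_i\twoheadrightarrow J_i/J_{i+1}$ is a $K$-linear isomorphism. By \cref{varphi_Jm}, $\vf$ maps each $J_i$ onto itself for $i\ge 1$, and also $\vf(J_0)=J_0$ because $\vf$ is bijective; hence for every $i\ge 0$ the map $\vf$ induces a $K$-linear automorphism $\bar\vf_i$ of $J_i/J_{i+1}$. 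The defining relation $\vf(e_{xy})=\wtl\vf(e_{xy})+j_{xy}$, with $e_{xy}\in L_i$, $\wtl\vf(e_{xy})\in L_i$ and $j_{xy}\in J_{i+1}$, says precisely that $\wtl\vf|_{L_i}$ corresponds to $\bar\vf_i$ under the isomorphism $L_i\cong J_i/J_{i+1}$. In particular each $\wtl\vf|_{L_i}$ is bijective; since $I(X,K)=\bigoplus_i L_i$ and $\wtl\vf$ is $K$-linear and preserves this decomposition, $\wtl\vf\in\mathrm{GL}(I(X,K))$.

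For multiplicativity I would compute directly on the standard basis. Fix $\vf,\psi\in\laut(I(X,K))$ and $e_{xy}\in L_i$. Write $\psi(e_{xy})=\wtl\psi(e_{xy})+j$ with $\wtl\psi(e_{xy})\in L_i$ and $j\in J_{i+1}$, and expand $\wtl\psi(e_{xy})=\sum_{e_{uv}\in L_i}c_{uv}e_{uv}$. Applying $\vf$, using $\vf(e_{uv})=\wtl\vf(e_{uv})+j_{uv}$ with $j_{uv}\in J_{i+1}$, and using $\vf(j)\in J_{i+1}$ (by \cref{varphi_Jm}), we get
\[
(\vf\circ\psi)(e_{xy})=\wtl\vf(\wtl\psi(e_{xy}))+\Bigl(\vf(j)+\sum_{e_{uv}\in L_i}c_{uv}j_{uv}\Bigr),
\]
where $\wtl\vf(\wtl\psi(e_{xy}))=\sum_{e_{uv}\in L_i}c_{uv}\wtl\vf(e_{uv})\in L_i$ and the parenthesized term lies in $J_{i+1}$. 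By the uniqueness of the decomposition $J_i=L_i\oplus J_{i+1}$ used to define the tilde operation, this forces $\widetilde{\vf\circ\psi}(e_{xy})=(\wtl\vf\circ\wtl\psi)(e_{xy})$. As $e_{xy}$ ranges over the standard basis, $\widetilde{\vf\circ\psi}=\wtl\vf\circ\wtl\psi$.

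Finally, $\wtl{\id}=\id$ is immediate from the definition, so $\vf\mapsto\wtl\vf$ is a group homomorphism from $\laut(I(X,K))$ into $\mathrm{GL}(I(X,K))$; incidentally $\wtl\vf\circ\wtl{\vf\m}=\wtl{\vf\m}\circ\wtl\vf=\id$ gives an alternative proof that each $\wtl\vf$ is invertible, independent of the first paragraph. None of the steps is a genuine obstacle: the only thing requiring care is the bookkeeping with the grading $I(X,K)=\bigoplus_i L_i$ and the systematic use of \cref{varphi_Jm} to certify that the ``error terms'' produced when applying $\vf$ and $\psi$ indeed land in $J_{i+1}$.
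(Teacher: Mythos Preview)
Your proof is correct and follows essentially the same approach as the paper. The only cosmetic difference is in the bijectivity step: you identify $\wtl\vf|_{L_i}$ with the automorphism that $\vf$ induces on $J_i/J_{i+1}$, whereas the paper checks surjectivity of $\wtl\vf$ by hand (taking $g$ with $\vf(g)=e_{xy}$, splitting $g=f+h\in L_i\oplus J_{i+1}$, and reading off $\wtl\vf(f)=e_{xy}$); the multiplicativity argument is the same basis computation in both, with your version making the linearity bookkeeping slightly more explicit.
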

	\begin{proof}
		We first prove that $\wtl\vf\in\mathrm{GL}(I(X,K))$. Since $\widetilde{\varphi}$ is linear, it suffices to check that $\widetilde{\varphi}$ is onto. Let $x\le y$ and take $g\in I(X,K)$ such that $\varphi(g)=e_{xy}$. If $e_{xy}\in L_i$, then, by \cref{varphi_Jm}, $g\in J_i$, so $g=f+h$ with $f\in L_i$ and $h\in J_{i+1}$. It follows that
		$
		e_{xy}=\vf(g)=\vf(f)+\vf(h)=\wtl\vf(f)+j+\vf(h),
		$
		where $j\in J_{i+1}$. Thus, $e_{xy}-\wtl\vf(f)=j+\vf(h)$ in which $e_{xy}-\wtl\vf(f)\in L_i$ and $j+\vf(h)\in J_{i+1}$. Therefore, $\wtl\vf(f)=e_{xy}$ (and $j+\vf(h)=0$), which implies that $\wtl\vf$ is onto.
		
		Let $\vf,\psi\in\laut(I(X,K))$. We need to prove that
		$
		\wtl{\vf\circ\psi}=\wtl\vf\circ\wtl\psi.
		$
		Let $x\le y$ with $e_{xy}\in L_i$. Then $\psi(e_{xy})=\wtl\psi(e_{xy})+j_{xy}$, where $j_{xy}\in J_{i+1}$, so $\vf(\psi(e_{xy}))=\vf(\wtl\psi(e_{xy}))+\vf(j_{xy})=\wtl\vf(\wtl\psi(e_{xy}))+j'_{xy}+\vf(j_{xy})$ for some $j'_{xy}\in J_{i+1}$. Since $\wtl\vf(\wtl\psi(e_{xy}))\in L_i$ and $j'_{xy}+\vf(j_{xy})\in J_{i+1}$, then $(\wtl{\vf\circ\psi})(e_{xy})=\wtl\vf(\wtl\psi(e_{xy}))$.
	\end{proof}
	
	Let us introduce $B_i=B\cap J_i$ and $X^2_<=\{(x,y)\in X^2: x<y\}$.
	
	\begin{lemma}\label{exypermutation-for-wtl-vf}
		Let $\varphi\in\laut(I(X,K))$. There exist a bijection $\theta=\0_\vf:B\to B$ with $\0(B_i)\sst B_i$ and a map $\sg=\sg_\vf:X^2_<\to K^*$ such that, for any $e_{xy}\in B$,
		\begin{align}\label{wtl-vf(e_xy)=sg.e_uv}
		\wtl\varphi(e_{xy})=\sigma(x,y)\0(e_{xy}).
		\end{align}
	\end{lemma}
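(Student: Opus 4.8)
The plan is to read off the statement from \cref{dim}. First, by \cref{wtl-homo} the map $\wtl\vf$ is an element of $\mathrm{GL}(I(X,K))$, and by construction $\wtl\vf(L_i)\sst L_i$ for every $i\ge 0$; since $I(X,K)=\bigoplus_{i\ge 0}L_i$, a dimension count forces $\wtl\vf$ to restrict to a linear automorphism of each $L_i$. Hence it suffices to prove that, for every $e_{xy}\in B$ with $m:=l(\lf x,y\rf)$, one has $\wtl\vf(e_{xy})=\sg(x,y)\,e_{uv}$ for some $e_{uv}\in L_m$ and some $\sg(x,y)\in K^*$. Granting this, I would set $\0(e_{xy})=e_{uv}$; then $\0$ sends $B$ into $B$ and preserves the interval length of each basis vector, so $\0(B_i)\sst B_i$, and $\0$ is injective (if $\0(e_{xy})=\0(e_{x'y'})$, then $\wtl\vf(e_{xy})$ and $\wtl\vf(e_{x'y'})$ are nonzero multiples of a common basis vector, which by injectivity of $\wtl\vf$ forces $e_{xy}=e_{x'y'}$), hence a bijection of the finite set $B$.

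For the reduced claim, fix $e_{xy}\in B$, put $m=l(\lf x,y\rf)\ge 1$ (so $e_{xy}\in L_m$), and work with the ideal $I=\gen{e_{xy}}$. By \cref{<e_xy>-description}, $I=\Span_K\{e_{ab}:a\le x\le y\le b\}$. Since $\lf x,y\rf\sst\lf a,b\rf$ for each such $(a,b)$, \cref{J_m} gives $I\sst J_m$; moreover $e_{ab}\in J_{m+1}$ for every $(a,b)\ne(x,y)$ in this range, because prepending an element below $x$ or appending one above $y$ to a length-$m$ chain of $\lf x,y\rf$ produces a chain of length $m+1$ in $\lf a,b\rf$. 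Therefore $\dim I/(I\cap J_{m+1})=1$. I would then push this forward along $\vf$. Since $e_{xy}\in J=J_1$, \cref{varphi_Jm} gives $\vf(e_{xy})\in J$; by \cref{ideal}, $I$ is also the Lie ideal generated by $e_{xy}$, so $\vf(I)=\gen{\vf(e_{xy})}_L$, which by \cref{ideal} is an associative ideal contained in $J$. Using $\vf(J_k)=J_k$ for all $k$ (\cref{varphi_Jm}) together with the injectivity of $\vf$, one gets $\vf(I)\sst J_m$ and $\vf(I)\cap J_{m+1}=\vf(I\cap J_{m+1})$, so $\vf$ induces a linear isomorphism $I/(I\cap J_{m+1})\cong\vf(I)/(\vf(I)\cap J_{m+1})$ and hence $\dim\vf(I)/(\vf(I)\cap J_{m+1})=1$.

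Now I would apply \cref{dim} to $\vf(I)\sst J_m$: it provides a unique $e_{uv}\in J_m-J_{m+1}$ (so $e_{uv}\in L_m$) such that every $g\in\vf(I)-J_{m+1}$ has the form $g=g(u,v)e_{uv}+j$ with $j\in J_{m+1}$. Because $e_{xy}\notin J_{m+1}$ and $\vf(J_{m+1})=J_{m+1}$ with $\vf$ injective, $\vf(e_{xy})\in\vf(I)-J_{m+1}$, whence $\vf(e_{xy})=\vf(e_{xy})(u,v)\,e_{uv}+j$ for some $j\in J_{m+1}$. Comparing this with the defining decomposition $\vf(e_{xy})=\wtl\vf(e_{xy})+j_{xy}$, where $\wtl\vf(e_{xy})\in L_m$ and $j_{xy}\in J_{m+1}$, inside the direct sum $J_m=L_m\oplus J_{m+1}$, I obtain $\wtl\vf(e_{xy})=\vf(e_{xy})(u,v)\,e_{uv}$. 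Setting $\sg(x,y):=\vf(e_{xy})(u,v)$, this coefficient is nonzero since $\wtl\vf$ is an automorphism of $L_m$ and thus $\wtl\vf(e_{xy})\ne 0$; this is precisely \eqref{wtl-vf(e_xy)=sg.e_uv}.

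The only step requiring real care is the transfer of the hypothesis $\dim I/(I\cap J_{m+1})=1$ of \cref{dim} from $I$ to $\vf(I)$: this is exactly where the identities $\vf(J_m)=J_m$, $\vf(J_{m+1})=J_{m+1}$ and the injectivity of $\vf$ are used. Everything else amounts to bookkeeping with the ideal $\gen{e_{xy}}$ and with the vector-space grading $I(X,K)=\bigoplus_i L_i$.
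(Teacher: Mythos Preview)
Your proof is correct and follows essentially the same route as the paper's: both compute $\dim\gen{e_{xy}}/(\gen{e_{xy}}\cap J_{m+1})=1$, transfer this to $\vf(\gen{e_{xy}})$ using that $\vf$ preserves each $J_k$ and that Lie ideals inside $J$ are associative ideals (\cref{ideal}), and then invoke \cref{dim}. The only cosmetic difference is that you establish bijectivity of $\wtl\vf$ on each $L_i$ at the outset via \cref{wtl-homo} and use this for $\sg(x,y)\ne 0$ and for injectivity of $\0$, whereas the paper argues injectivity of $\0$ at the end directly from $\vf$.
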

	\begin{proof}
		Suppose that $e_{xy}\in B_i-B_{i+1}$. We will introduce
		$
		I_{xy}=\langle e_{xy}\rangle/(\langle e_{xy}\rangle\cap J_{i+1}).
		$ 
		Observe that $\dim(I_{xy})=1$ since any $e_{uv}\in\gen{e_{xy}}$ with $e_{uv}\ne e_{xy}$ belongs to $J_{i+1}$ by \cref{<e_xy>-description}.
		
		Note that $\varphi(\langle e_{xy}\rangle)\subseteq J_i$ by \cref{varphi_Jm} and $\varphi(\langle e_{xy}\rangle)$ is an ideal of $I(X,K)$ by \cref{ideal}. Moreover, $\varphi(\langle e_{xy}\rangle)/(\varphi(\langle e_{xy}\rangle)\cap J_{i+1})$ and $I_{xy}$ are isomorphic as Lie algebras. Hence
		$
		\dim (\varphi(\langle e_{xy}\rangle)/(\varphi(\langle e_{xy}\rangle)\cap J_{i+1})) = \dim(I_{xy})=1.
		$
		By \cref{dim} there are $e_{uv}\in B_i-B_{i+1}$, $k\in K^*$ and $g\in J_{i+1}$ such that $\varphi(e_{xy})=k e_{uv}+g$. Hence, $\wtl\varphi(e_{xy})=k e_{uv}$. We thus define $\theta:B\to B$ by $\0(e_{xy})=e_{uv}$ and $\sg:X^2_<\to K^*$ by $\sg(x,y)=k$. Clearly, $\0(B_i)\sst B_i$.
		
		It remains to prove that $\theta$ is a bijection. Since $B$ is finite, it suffices to establish the injectivity of $\0$. Suppose that  $\wtl\varphi(e_{xy}) = \sigma(x,y) e_{uv}$  and $\wtl\varphi(e_{ab}) = \sg(a,b) e_{uv}$ for some $e_{xy},e_{ab}\in B_i-B_{i+1}$. Then $\varphi(e_{xy}) = \sigma(x,y) e_{uv}+g$  and $\varphi(e_{ab}) = \sg(a,b) e_{uv}+h$ for some $g,h\in J_{i+1}$. Consequently, $\varphi(\sg(a,b) e_{xy} - \sigma(x,y) e_{ab}) = \sg(a,b) g - \sigma(x,y) h\in J_{i+1}$. It follows that $\sg(a,b) e_{xy} - \sigma(x,y) e_{ab}\in J_{i+1}$ by \cref{varphi_Jm}. Since $\sg(x,y),\sigma(a,b)\in K^*$, then $e_{xy}=e_{ab}$ (and $\sg(x,y)=\sigma(a,b)$).
	\end{proof}
	
	\begin{lemma}\label{vf(e_xy)-wtlvf(e_xy)}
		Let $\varphi\in\laut(I(X,K))$ and $e_{xy}\in L_i$, where $i>0$. Then $\vf(e_{xy})-\wtl\vf(e_{xy})\in\gen{\0_\vf(e_{xy})}\cap J_{i+1}$.
	\end{lemma}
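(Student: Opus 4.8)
The plan is to establish the stronger statement
$$\vf(\gen{e_{xy}})=\gen{\0_\vf(e_{xy})},$$
from which the lemma follows immediately. Indeed, writing $e_{uv}=\0_\vf(e_{xy})$ we have $\wtl\vf(e_{xy})=\sg_\vf(x,y)e_{uv}\in\gen{e_{uv}}$ by \cref{exypermutation-for-wtl-vf} and $\vf(e_{xy})\in\vf(\gen{e_{xy}})=\gen{e_{uv}}$ by the displayed equality, so $\vf(e_{xy})-\wtl\vf(e_{xy})\in\gen{e_{uv}}=\gen{\0_\vf(e_{xy})}$; and this difference lies in $J_{i+1}$ by the very definition of $\wtl\vf$. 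For the setup note that $i>0$ forces $x<y$, and since $\0_\vf$ maps $B$ into $B$ we also have $u<v$; hence $\gen{e_{xy}}$ and $\gen{e_{uv}}$ are associative ideals contained in $J(I(X,K))=J_1$ (\cref{J}). By \cref{varphi_Jm}, $\vf$ and $\vf\m$ preserve $J(I(X,K))$ and every $J_m$, and by \cref{ideal} any Lie ideal contained in $J(I(X,K))$ is in fact an associative ideal; so $\vf(\gen{e_{xy}})$ and $\vf\m(\gen{e_{uv}})$ are associative ideals inside $J(I(X,K))$. Finally, \cref{exypermutation-for-wtl-vf} together with the definition of $\wtl\vf$ gives $\vf(e_{xy})=\sg_\vf(x,y)e_{uv}+g$, where $\sg_\vf(x,y)\in K^*$, $e_{uv}\in L_i$, and $g:=\vf(e_{xy})-\wtl\vf(e_{xy})\in J_{i+1}$.

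The inclusion $\gen{e_{uv}}\sst\vf(\gen{e_{xy}})$ is easy: $\vf(e_{xy})(u,v)=\sg_\vf(x,y)\ne 0$ (as $e_{uv}\in L_i$ and $g\in J_{i+1}$), so \cref{exytala} applied to the Lie ideal $\vf(\gen{e_{xy}})$ gives $e_{uv}\in\vf(\gen{e_{xy}})$. The reverse inclusion $\vf(\gen{e_{xy}})\sst\gen{e_{uv}}$ is the real content, and the idea is to obtain it by running the same argument for $\vf\m$. From $\vf(e_{xy})=\sg_\vf(x,y)e_{uv}+g$ we get $\vf\m(e_{uv})=\sg_\vf(x,y)\m(e_{xy}-\vf\m(g))$; here $\vf\m(g)\in J_{i+1}$ by \cref{varphi_Jm}, and since $l(\lf x,y\rf)=i<i+1$ the coefficient of $e_{xy}$ in $\vf\m(g)$ vanishes (\cref{J_m}), so $\vf\m(e_{uv})(x,y)=\sg_\vf(x,y)\m\ne 0$. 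Now \cref{exytala}, applied to the Lie ideal $\vf\m(\gen{e_{uv}})$, yields $e_{xy}\in\vf\m(\gen{e_{uv}})$, hence $\gen{e_{xy}}\sst\vf\m(\gen{e_{uv}})$, i.e.\ $\vf(\gen{e_{xy}})\sst\gen{e_{uv}}$. Together with the first inclusion this is the displayed equality, and then $g=\vf(e_{xy})-\sg_\vf(x,y)e_{uv}\in\gen{e_{uv}}$, which with $g\in J_{i+1}$ is exactly the claim.

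The step I expect to be the genuine obstacle is the reverse inclusion. A head-on attack — expanding $g$ over the standard basis and trying to show $\gen{e_{ab}}\sst\gen{e_{uv}}$ for each $e_{ab}$ with $g(a,b)\ne 0$ — seems to force a downward induction on $i$ that turns out to be circular. The argument above avoids this: the nonvanishing of $\vf\m(e_{uv})$ at $(x,y)$ comes straight out of the same computation that produces $\vf(e_{xy})=\sg_\vf(x,y)e_{uv}+g$, and then \cref{exytala} does the work symmetrically in both directions. Everything else is routine bookkeeping whose sole purpose is to keep all the ideals inside $J(I(X,K))$, so that \cref{ideal,exytala} apply — and this is exactly where the hypothesis $i>0$ is used.
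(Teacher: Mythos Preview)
Your proof is correct and takes a genuinely different route from the paper's.

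The paper argues directly with brackets: from $e_{xy}=[e_x,e_{xy}]$ and $e_{xy}=[e_{xy},e_y]$ it extracts $g-[\vf(e_x),g]\in\gen{e_{uv}}$ and $g-[g,\vf(e_y)]\in\gen{e_{uv}}$, combines these to get $g-[[\vf(e_x),g],\vf(e_y)]\in\gen{e_{uv}}$, and then observes that the double bracket equals $\vf(h(x,y)e_{xy})$ for $h=\vf\m(g)\in J_{i+1}$, which vanishes since $h(x,y)=0$. Your approach instead proves the ideal equality $\vf(\gen{e_{xy}})=\gen{\0_\vf(e_{xy})}$ outright, by applying \cref{exytala} symmetrically to the Lie ideals $\vf(\gen{e_{xy}})$ and $\vf\m(\gen{e_{uv}})$ (both associative by \cref{ideal}) once you have checked the relevant entries are nonzero. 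Both proofs ultimately hinge on the same fact --- that $\vf\m(g)\in J_{i+1}$ forces its $(x,y)$-coordinate to vanish --- but yours packages it more conceptually and yields the stronger statement $\vf(\gen{e_{xy}})=\gen{\0_\vf(e_{xy})}$ as a byproduct, whereas the paper's bracket manipulation stays closer to the single element $g$.
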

	\begin{proof}
		Write $\vf(e_{xy})=\wtl\vf(e_{xy})+g$ with $g\in J_{i+1}$. Since $e_{xy}=[e_x,e_{xy}]$, then
		\begin{align*}
		\wtl\vf(e_{xy})+g =\vf([e_x,e_{xy}])=[\vf(e_x),\vf(e_{xy})]
		=[\vf(e_x),\wtl\vf(e_{xy})]+[\vf(e_x),g].
		\end{align*}
		But $\wtl\vf(e_{xy})\in \gen{\0_\vf(e_{xy})}$ by \cref{exypermutation-for-wtl-vf}, so
		\begin{align}\label{g-[vf(e_x)_g]-in-<e_uv>}
		g-[\vf(e_x),g]=[\vf(e_x),\wtl\vf(e_{xy})]-\wtl\vf(e_{xy})\in\gen{\0_\vf(e_{xy})}.
		\end{align}
		Similarly $e_{xy}=[e_{xy},e_y]$ implies that
		\begin{align}\label{g-[g_vf(e_y)]-in-<e_uv>}
		g-[g,\vf(e_y)]\in\gen{\0_\vf(e_{xy})}.
		\end{align}
		Now, using \cref{g-[vf(e_x)_g]-in-<e_uv>,g-[g_vf(e_y)]-in-<e_uv>} we have
		\begin{align}\label{g-[[vf(e_x)_g]_vf(e_y)]}
		g-[[\vf(e_x),g],\vf(e_y)]=g-[g,\vf(e_y)]+[g-[\vf(e_x),g],\vf(e_y)]\in\gen{\0_\vf(e_{xy})}.
		\end{align}
		Choose $h\in J_{i+1}$, such that $g=\vf(h)$. Then
		\begin{align*}
		[[\vf(e_x),g],\vf(e_y)]=[[\vf(e_x),\vf(h)],\vf(e_y)]=\vf([[e_x,h],e_y])=\vf(h(x,y)e_{xy}).
		\end{align*}
		But $h(x,y)=0$ by \cref{J_m}, so $[[\vf(e_x),g],\vf(e_y)]=0$ and the result follows from \cref{g-[[vf(e_x)_g]_vf(e_y)]}.
	\end{proof}
	
	\begin{proposition}\label{elementary_phi_tilde}
		Let $\vf\in\laut(I(X,K))$. Then $\wtl\vf\in\laut(I(X,K))$.
	\end{proposition}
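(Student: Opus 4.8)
The plan is to exploit that, by \cref{wtl-homo}, $\wtl\vf$ is already a linear bijection of $I(X,K)$ preserving every $L_i$ and hence every $J_i=\bigoplus_{k\ge i}L_k$, so that only the compatibility of $\wtl\vf$ with the commutator is at issue. By bilinearity it is enough to verify $\wtl\vf([f,g])=[\wtl\vf(f),\wtl\vf(g)]$ for $f,g$ in the standard basis; and since $[e_{xy},e_{uv}]=\dl_{yu}e_{xv}-\dl_{vx}e_{uy}$ is always $0$ or $\pm$ a single standard basis vector and $[\cdot,\cdot]$ is antisymmetric, everything reduces to the identity
$$\wtl\vf([e_{xy},e_{yv}])=[\wtl\vf(e_{xy}),\wtl\vf(e_{yv})]\qquad(x\le y\le v),$$
the subcases $x=y$ and $y=v$ (which involve $D(X,K)$) being handled the same way, only more easily.

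Next I would set up the basic computation. Fix $x<y<v$, put $i=l(\lf x,y\rf)$, $j=l(\lf y,v\rf)$ and $m=l(\lf x,v\rf)$ (note $m\ge i+j$, by concatenating maximal chains of $\lf x,y\rf$ and $\lf y,v\rf$), and use \cref{exypermutation-for-wtl-vf,vf(e_xy)-wtlvf(e_xy)} to write $\vf(e_{xy})=\sg_\vf(x,y)\0_\vf(e_{xy})+g_{xy}$ with $g_{xy}\in\gen{\0_\vf(e_{xy})}\cap J_{i+1}$, and similarly for $e_{yv}$ and $e_{xv}$. Expanding $[\vf(e_{xy}),\vf(e_{yv})]=\vf(e_{xv})$, each of the three ``cross'' terms is a commutator of an element of some $J_a$ with an element of some $J_b$, $a+b\ge i+j+1$, hence lies in $J_{i+j+1}$ by \cref{J_m} (which also gives $J_{m+1}\sst J_{i+j+1}$), so
$$[\wtl\vf(e_{xy}),\wtl\vf(e_{yv})]\equiv\wtl\vf(e_{xv})\pmod{J_{i+j+1}}.$$
Here the left side $=\sg_\vf(x,y)\sg_\vf(y,v)[\0_\vf(e_{xy}),\0_\vf(e_{yv})]$ is again $0$ or a scalar multiple of a single standard basis vector, while $\wtl\vf(e_{xv})=\sg_\vf(x,v)\0_\vf(e_{xv})$ is a \emph{nonzero} element of $L_m$.

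In the ``leading order'' case $m=i+j$ this is enough: comparing $L_{i+j}$-components in the last congruence (an element of $J_{i+j+1}$ having trivial $L_{i+j}$-component) forces $[\wtl\vf(e_{xy}),\wtl\vf(e_{yv})]=\wtl\vf(e_{xv})$, and shows in addition that $\0_\vf(e_{xy})$ and $\0_\vf(e_{yv})$ are ``composable'', with $\0_\vf(e_{xy})\0_\vf(e_{yv})=\0_\vf(e_{xv})$ and $\sg_\vf(x,y)\sg_\vf(y,v)=\sg_\vf(x,v)$ (the structural \cref{<e_xy><e_uv>-is-zero,vf(e_xy)-wtlvf(e_xy)} help pin the product down; the order-reversing alternative — according to whether $\vf$ behaves like an automorphism or an anti-automorphism — is treated the same way with all products read in the reversed order, and I keep to the order-preserving case). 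For the general case $m>i+j$ I would reduce back to the leading-order case along maximal chains: if $x=t_0<t_1<\dots<t_m=v$ is a maximal chain of $\lf x,v\rf$ then $l(\lf t_{k-1},t_{k+1}\rf)=2$ for each $k$ (else the chain lengthens), so iterating the leading-order identity gives $\wtl\vf(e_{xv})=\wtl\vf(e_{t_0t_1})\wtl\vf(e_{t_1t_2})\cdots\wtl\vf(e_{t_{m-1}t_m})$, an honest associative product of scalar multiples of basis vectors (the ``reversed'' partial products vanishing for trivial index reasons); and the same applies to $\wtl\vf(e_{xy})$, $\wtl\vf(e_{yv})$ along maximal chains of $\lf x,y\rf$, $\lf y,v\rf$. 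Concatenating, $[\wtl\vf(e_{xy}),\wtl\vf(e_{yv})]=\wtl\vf(e_{xy})\wtl\vf(e_{yv})$ becomes a product of $\wtl\vf$-images of covering relations along a chain of covers from $x$ to $v$, so the whole statement collapses to the claim that such a product depends only on $x$ and $v$.

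That last point is the main obstacle. When $\lf x,v\rf$ is not ranked, the factorization of $\wtl\vf(e_{xv})$ along a maximal chain is not a priori chain-independent, and one must prove that rerouting the chain of covers does not change the product. Two chains of covers with the same endpoints are joined by finitely many elementary moves, each supported on a single cycle of $X$, and invariance under one such move should follow from the multiplicativity of $\0_\vf$ and the cocycle relation for $\sg_\vf$ established above, applied to the covering relations that make up the cycle — precisely the cycle combinatorics of $X$ foreshadowed in the Introduction, and the step that needs care; the earlier steps are essentially formal, resting only on the filtration of $I(X,K)$ by the $J_i$ together with \cref{exypermutation-for-wtl-vf,vf(e_xy)-wtlvf(e_xy)}. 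Granting this, $[\wtl\vf(e_{xy}),\wtl\vf(e_{yv})]=\wtl\vf(e_{xv})$ in all cases, so $\wtl\vf$ is a Lie endomorphism of $I(X,K)$, and being bijective it lies in $\laut(I(X,K))$.
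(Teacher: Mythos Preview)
There are two genuine gaps.

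First, your reduction ``everything reduces to $\wtl\vf([e_{xy},e_{yv}])=[\wtl\vf(e_{xy}),\wtl\vf(e_{yv})]$'' is wrong: when $[e_{xy},e_{uv}]=0$ (i.e.\ $y\ne u$ and $v\ne x$, both strict pairs) you still have to prove $[\wtl\vf(e_{xy}),\wtl\vf(e_{uv})]=0$, equivalently that $\0_\vf(e_{xy})$ and $\0_\vf(e_{uv})$ commute. This is not automatic and does not follow from antisymmetry. Your congruence argument only gives $[\wtl\vf(e_{xy}),\wtl\vf(e_{uv})]\in J_{i+j+1}$, which is perfectly compatible with $\0_\vf(e_{xy})\0_\vf(e_{uv})=e_{ad}\ne 0$ for some $e_{ad}$ with $l(\lf a,d\rf)>i+j$.

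Second, in the case $m>i+j$ you reduce to independence of the product along a chain of covers from $x$ to $v$. But the two chains you must compare have \emph{different lengths}: the concatenated chain through $y$ has length $i+j$, the longest chain has length $m$, so ``elementary moves supported on a cycle'' (which preserve length) do not connect them. Moreover, the monotonicity of $\0_\vf$ on maximal chains and the cocycle behaviour of $\sg_\vf$ that you want to invoke are proved in the paper (\cref{0_vf(e_xy)-and-0_vf(e_yz),0_vf-on-a-chain}) only \emph{after} the present proposition, and their proofs assume $\wtl\vf\in\laut(I(X,K))$; so this route is circular as written.

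Both problems vanish once you use \cref{vf(e_xy)-wtlvf(e_xy)} at full strength. You record $g_{xy}\in\gen{\0_\vf(e_{xy})}\cap J_{i+1}$ but then discard the ideal part and keep only $J_{i+1}$. The paper's proof exploits precisely that containment: if $\0_\vf(e_{xy})=e_{ab}$, $\0_\vf(e_{uv})=e_{cd}$ and, say, $b=c$, then by \cref{<e_xy>-description} one has $\gen{e_{ab}}\cdot\gen{e_{cd}}\subseteq\gen{e_{ad}}$ and $\gen{e_{cd}}\cdot\gen{e_{ab}}=0$, so the three cross terms lie in $\gen{e_{ad}}\cap J_{k+1}$ with $k=l(\lf a,d\rf)$, not merely in $J_{i+j+1}$. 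Hence $[\vf(e_{xy}),\vf(e_{uv})]=\af\bt e_{ad}+p$ with $e_{ad}\in L_k$ and $p\in J_{k+1}$. If $[e_{xy},e_{uv}]=0$ this forces $\af\bt=0$, a contradiction; if $[e_{xy},e_{uv}]=e_{xv}$ this reads off $\wtl\vf(e_{xv})=\af\bt e_{ad}$ directly (the containments $\vf(e_{xv})\in J_m$ and $\af\bt e_{ad}+p\in J_k$ force $k=m$). No chain-independence argument is needed. The case $[e_{ab},e_{cd}]=0$ while $[e_{xy},e_{uv}]\ne 0$ is excluded by applying the first case to $\vf^{-1}$.
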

	
	\begin{proof}
		Since $\widetilde{\varphi}\in\mathrm{GL}(I(X,K))$ by \cref{wtl-homo}, it remains to prove $\widetilde{\varphi}([e_{xy},e_{uv}])=[\widetilde{\varphi}(e_{xy}),\widetilde{\varphi}(e_{uv})]$, for all $x\le y$ and $u\le v$. 
		
		The case $e_{xy},e_{uv}\in L_0=D(X,K)$ is trivial because $[L_0,L_0]=\{0\}$. 
		If $e_{xy}\in L_0$ and $e_{uv}\in L_i$ with $i>0$, then $\vf([e_{xy},e_{uv}])=[\varphi(e_{xy}),\varphi(e_{uv})]=[\widetilde{\varphi}(e_{xy}),\widetilde{\varphi}(e_{uv})]+j$, where $j\in J_{i+1}$. Since $[L_0,L_i]\sst L_i$, then $\wtl\vf([e_{xy},e_{uv}])=[\widetilde{\varphi}(e_{xy}),\widetilde{\varphi}(e_{uv})]$.
		
		Let $e_{xy}\in L_i$ and $e_{uv}\in L_j$ with $i,j>0$. By \cref{exypermutation-for-wtl-vf,vf(e_xy)-wtlvf(e_xy)}, $\vf(e_{xy})=\af e_{ab}+g$ and $\vf(e_{uv})=\bt e_{cd}+h$, where $\af,\bt\in K^*$, $e_{ab} \in L_i$, $e_{cd} \in L_j$, $g\in \gen{e_{ab}}\cap J_{i+1}$, $h\in \gen{e_{cd}}\cap J_{j+1}$. Thus, $\wtl\vf(e_{xy})=\af e_{ab}$ and $\wtl\vf(e_{uv})=\bt e_{cd}$. If $[e_{xy},e_{uv}]=0$, then $[\wtl\vf(e_{xy}),\wtl\vf(e_{uv})]=0$. Indeed, assume that $e_{ab}e_{cd}\ne 0$, i.e., $b=c$. Then $0=[\vf(e_{xy}),\vf(e_{uv})]=\af\bt e_{ad}+p$, where $p=\af[e_{ab},h]+\bt[g,e_{cd}]+[g,h]$. Observe from \cref{<e_xy>-description} that $p\in \gen{e_{ad}}\cap J_{k+1}$, where $k=l(\lf a,d\rf)$, a contradiction. The case $e_{cd}e_{ab}\ne 0$ is similar. Let now $[e_{xy},e_{uv}]\ne 0$. Then applying the above argument to $e_{ab},e_{cd}$ and $\vf\m$ we conclude that $[e_{ab},e_{cd}]\ne 0$, because $\af\m e_{xy}=\wtl{\vf\m}(e_{ab})$ and $\bt\m e_{uv}=\wtl{\vf\m}(e_{cd})$. Let $b=c$ and write $\vf([e_{xy},e_{uv}])=[\vf(e_{xy}),\vf(e_{uv})]=\af\bt e_{ad}+p$ as above. If $l(\lf a,d\rf)=k$, then $p\in J_{k+1}$, whence $\wtl\vf([e_{xy},e_{uv}])=\af\bt e_{ad}=[\wtl\vf(e_{xy}),\wtl\vf(e_{uv})]$. If $a=d$, the proof is analogous. 
	\end{proof}
	
	\begin{corollary}\label{wtl-endo}
		The map $\vf\mapsto\wtl\vf$ is an endomorphism of the group $\laut(I(X,K))$.
	\end{corollary}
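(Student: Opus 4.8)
The plan is simply to assemble the two results that immediately precede the statement. By \cref{wtl-homo}, the assignment $\vf\mapsto\wtl\vf$ is a group homomorphism from $\laut(I(X,K))$ into $\mathrm{GL}(I(X,K))$; in particular it is multiplicative, $\wtl{\vf\circ\psi}=\wtl\vf\circ\wtl\psi$, and sends the identity map to itself. By \cref{elementary_phi_tilde}, the image of this homomorphism actually lies in the subgroup $\laut(I(X,K))\le\mathrm{GL}(I(X,K))$, since $\wtl\vf\in\laut(I(X,K))$ for every $\vf\in\laut(I(X,K))$.

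Combining these two facts, $\vf\mapsto\wtl\vf$ is a group homomorphism from $\laut(I(X,K))$ to itself, that is, a group endomorphism, which is exactly the assertion. There is no genuine obstacle at this point: the substantive work has already been carried out, namely the verification of the homomorphism property in \cref{wtl-homo} and the fact that $\wtl\vf$ is again a Lie automorphism rather than merely a linear bijection in \cref{elementary_phi_tilde}. One may optionally observe that this endomorphism is idempotent — $\wtl{\wtl\vf}=\wtl\vf$, because $\wtl\vf$ already maps each $L_i$ into itself with no lower-order correction term — so that $\vf\mapsto\wtl\vf$ is in fact a retraction of $\laut(I(X,K))$ onto the subgroup of those Lie automorphisms which preserve each $L_i$ (the ``elementary'' ones); but this remark is not needed for the corollary itself.
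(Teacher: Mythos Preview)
Your proposal is correct and matches the paper's own reasoning: the corollary is stated without proof as an immediate consequence of \cref{wtl-homo} and \cref{elementary_phi_tilde}, precisely the two ingredients you combine. Your optional remark on idempotence is also anticipated in the paper as the subsequent \cref{wtl-of-wtl-is-wtl}.
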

	
	\begin{definition}
		Denote by $\wtl\laut(I(X,K))$ the image of $\vf\mapsto\wtl\vf$. A Lie automorphism from $\wtl\laut(I(X,K))$ will be called \textit{elementary}. 
	\end{definition}
	
	\begin{remark}\label{rem_elementary}
		Observe that $\vf$ is elementary if and only if $\vf(L_i)\sst L_i$, for all $i\geq 0$.
	\end{remark}
	
	We will also introduce the temporary notation $N$ for the kernel of $\vf\mapsto\wtl\vf$. We will soon see that $N=\inn_1(I(X,K))$.
	
	\begin{remark}\label{wtl-of-wtl-is-wtl}
		For any $\vf\in\laut(I(X,K))$ one has $\wtl{\wtl\vf}=\wtl\vf$. In particular, $\vf\circ(\wtl\vf)^{-1}$ belongs to $N$.
	\end{remark}

	\begin{lemma}\label{vf(e_x)-in-<e_x>}
		Let $\vf\in N$. Then $\vf(e_x)\in\langle e_x\rangle$ for all $x\in X$.
	\end{lemma}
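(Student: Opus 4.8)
The plan is to extract everything from the fact that $D(X,K)$ is a commutative subalgebra of $I(X,K)$. Since $\vf\in N$ means $\wtl\vf=\id$, the definition of $\wtl\vf$ gives immediately $\vf(e_x)=e_x+j_x$ with $j_x\in J_1=J(I(X,K))$ for every $x\in X$. By \cref{<e_xy>-description}, $\gen{e_x}=\Span_K\{e_{uv}:u\le x\le v\}$, so, since $j_x\in J_1$, the assertion $\vf(e_x)\in\gen{e_x}$ is equivalent to the claim that the coefficient of $e_{uv}$ in $j_x$ vanishes whenever $u\le x\le v$ fails. The key relation is that, $D(X,K)$ being abelian,
\begin{equation}\label{eq-plan-commute}
0=[\vf(e_x),\vf(e_y)]=[e_x,j_y]+[j_x,e_y]+[j_x,j_y]\qquad\text{for all }x,y\in X.
\end{equation}
I would decompose $j_x=\sum_{k\ge1}j_x^{(k)}$ according to $J_1=\bigoplus_{k\ge1}L_k$, writing $j_x^{(k)}=\pi_k(j_x)\in L_k$, and prove by induction on $k$ that $j_x^{(k)}\in\gen{e_x}$ for every $x$. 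Summing over $k$ then gives $j_x\in\gen{e_x}$, hence $\vf(e_x)=e_x+j_x\in\gen{e_x}$.

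For the induction I would first record two facts. A direct computation (or \cref{comuta_e}) shows that the operator $f\mapsto[e_x,f]$ preserves each $L_m$ and acts on a basis vector $e_{uv}$ by the scalar $\dl_{xu}-\dl_{xv}$; consequently, applying $\pi_k$ to \cref{eq-plan-commute} replaces $[e_x,j_y]$ by $[e_x,j_y^{(k)}]$ and $[j_x,e_y]$ by $[j_x^{(k)},e_y]$. Secondly, $[j_x,j_y]\in J_2$ and, by \cref{J_m}, $[j_x^{(\af)},j_y^{(\bt)}]\in J_{\af+\bt}$ for $\af,\bt\ge1$, so that $\pi_k([j_x,j_y])=\sum_{\af+\bt\le k,\ \af,\bt\ge1}\pi_k([j_x^{(\af)},j_y^{(\bt)}])$ only involves indices $\af,\bt\le k-1$. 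By the induction hypothesis $j_x^{(\af)}\in\gen{e_x}$ and $j_y^{(\bt)}\in\gen{e_y}$ for all such $\af,\bt$, and since $\gen{e_x}$ and $\gen{e_y}$ are two-sided ideals (hence stable under bracketing with any element and under every $\pi_k$), the element $r_{xy}:=\pi_k([j_x,j_y])$ belongs to $\gen{e_x}\cap\gen{e_y}$. Projecting \cref{eq-plan-commute} onto $L_k$ therefore yields
\begin{equation}\label{eq-plan-layer}
[e_x,j_y^{(k)}]+[j_x^{(k)},e_y]+r_{xy}=0,\qquad r_{xy}\in\gen{e_x}\cap\gen{e_y}.
\end{equation}

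Finally I would fix $a<b$ with $l(\lf a,b\rf)=k$, and for an arbitrary $z\in X$ for which $a\le z\le b$ fails (so in particular $z\ne a$ and $z\ne b$) I would put $x=z$, $y=a$ in \cref{eq-plan-layer} and compare the coefficients of $e_{ab}$. The $[e_z,j_a^{(k)}]$-term contributes a multiple of $\dl_{za}-\dl_{zb}=0$; the $r_{za}$-term contributes $0$, because $r_{za}\in\gen{e_z}=\Span_K\{e_{uv}:u\le z\le v\}$ while $a\le z\le b$ fails; and the surviving contribution equals the negative of the coefficient of $e_{ab}$ in $j_z^{(k)}$. Hence that coefficient is $0$, i.e.\ $j_z^{(k)}\in\Span_K\{e_{uv}:u\le z\le v,\ l(\lf u,v\rf)=k\}\sst\gen{e_z}$, which closes the induction. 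The base case $k=1$ is the same computation, with $r_{xy}=0$ because $[j_x,j_y]\in J_2$ has no component in $L_1$.

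The one delicate point is the cross term $[j_x,j_y]$ in \cref{eq-plan-commute}, which admits no direct description; organizing the proof as an induction on layers is exactly what tames it, since at level $k$ this term (encoded in $r_{xy}$) sees only the strictly lower homogeneous components $j_x^{(\af)},j_y^{(\bt)}$, for which membership in $\gen{e_x}$ and $\gen{e_y}$ is already available, and that is precisely what forces it out of the coefficient equation once $z$ is taken outside the interval $\lf a,b\rf$.
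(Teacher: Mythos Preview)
Your proof is correct and follows essentially the same approach as the paper: both use the commutation relation $[\vf(e_x),\vf(e_y)]=0$ with $y$ chosen as an endpoint of the interval under consideration, and both argue by induction on the length. The only difference is organizational: the paper fixes $x$ and a pair $u<v$ with $x\notin\lf u,v\rf$, takes $y=u$, evaluates the relation directly at $(u,v)$, and inducts on $l(\lf u,v\rf)$, using that $j_x(u,w)=j_x(w,v)=0$ for $u<w<v$ by the inductive hypothesis; you instead project onto the layer $L_k$ and package the inductive hypothesis as membership of the lower components in the ideal $\gen{e_z}$, which is the same information stated more abstractly.
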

	\begin{proof}
		Take $u<v$ such that $x\not\in \lfloor u,v\rfloor$. Write $\vf(e_x)=e_x+j_x$ and $\vf(e_u)=e_u+j_u$, where $j_x,j_u\in J(I(X,K))$. We need to show that $j_x(u,v)=0$. Applying $\vf$ to $[e_x,e_u]=0$, we obtain
		\begin{align*}
		[e_x,j_u]+[j_x,e_u]+[j_x,j_u]=0.
		\end{align*}
		Calculating the value of this sum at $(u,v)$, we have
		\begin{align}\label{j_x(uv)=[j_u_j_x](uv)}
		j_x(u,v)=[j_x,j_u](u,v)=\sum_{u<w<v}(j_x(u,w)j_u(w,v)-j_u(u,w)j_x(w,v)).
		\end{align}
		We will prove that $j_x(u,v)=0$ by induction on $l(\lfloor u,v\rfloor)$. If $l(\lfloor u,v\rfloor)=1$, then $j_x(u,v)=0$, since there is no $w$ with $u<w<v$ on the right-hand side of \cref{j_x(uv)=[j_u_j_x](uv)}. If $l(\lfloor u,v\rfloor)>1$, then $l(\lfloor u,w\rfloor)$ and $l(\lfloor w,v\rfloor)$ are strictly less than $l(\lfloor u,v\rfloor)$ for any $u<w<v$. Since moreover $x\not\in\lfloor u,w\rfloor$ and $x\not\in\lfloor w,v\rfloor$, by induction hypothesis $j_x(u,w)=j_x(w,v)=0$, whence $j_x(u,v)=0$ by \cref{j_x(uv)=[j_u_j_x](uv)}.
	\end{proof}
	
	\begin{corollary}\label{vf(e_x)vf(e_y)-is-zero}
		Let $\vf\in N$. Then $\vf(e_x)\vf(e_y)=0$ for all $x\ne y$. 
	\end{corollary}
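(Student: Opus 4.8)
The plan is to feed \cref{vf(e_x)-in-<e_x>} into the multiplicative vanishing statement of \cref{<e_xy><e_uv>-is-zero}. By \cref{vf(e_x)-in-<e_x>} we already know $\vf(e_x)\in\gen{e_x}$ and $\vf(e_y)\in\gen{e_y}$, so the claim reduces to deciding when a product of an element of $\gen{e_x}$ with an element of $\gen{e_y}$ vanishes. Specializing $x=y$ (and $u=v$) in \cref{<e_xy><e_uv>-is-zero} shows precisely that $\gen{e_a}\gen{e_b}=\{0\}$ whenever $a\not\le b$.

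Now fix $x\ne y$. Since $x\le y$ and $y\le x$ cannot both hold, we have $x\not\le y$ or $y\not\le x$. If $x\not\le y$, then $\gen{e_x}\gen{e_y}=\{0\}$ by the observation above, hence $\vf(e_x)\vf(e_y)=0$ directly. If instead $y\not\le x$, the same observation gives $\gen{e_y}\gen{e_x}=\{0\}$, so $\vf(e_y)\vf(e_x)=0$; to pass to the other order I would use that $\vf$ is a Lie automorphism together with $[e_x,e_y]=e_xe_y-e_ye_x=0$ (valid because $x\ne y$), which yields $0=\vf([e_x,e_y])=[\vf(e_x),\vf(e_y)]=\vf(e_x)\vf(e_y)-\vf(e_y)\vf(e_x)$, whence $\vf(e_x)\vf(e_y)=\vf(e_y)\vf(e_x)=0$.

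There is essentially no obstacle here; the only point to watch is that \cref{<e_xy><e_uv>-is-zero} is asymmetric in its two factors, so in exactly one of the two cases one must first commute $\vf(e_x)$ past $\vf(e_y)$, and this commutativity is automatic since $\vf$ preserves the bracket and $e_x$, $e_y$ commute.
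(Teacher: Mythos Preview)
Your proof is correct and follows essentially the same approach as the paper: both use \cref{vf(e_x)-in-<e_x>} together with the specialization of \cref{<e_xy><e_uv>-is-zero} to diagonal idempotents, handle the case $x\not\le y$ directly, and then use the commutativity $[\vf(e_x),\vf(e_y)]=\vf([e_x,e_y])=0$ to reduce the remaining case to the first. The only cosmetic difference is that the paper records the commutativity upfront rather than invoking it inside the second case.
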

	\begin{proof}
		Notice that $\vf(e_x)$ commutes with $\vf(e_y)$, since $[\vf(e_x),\vf(e_y)]=\vf([e_x,e_y])=0$. If $x\not\le y$, then the result follows from \cref{<e_xy><e_uv>-is-zero,vf(e_x)-in-<e_x>}. If $x<y$, then $y\not\le x$, so $\vf(e_y)\vf(e_x)=0$ by the previous case, whence $\vf(e_x)\vf(e_y)=0$. 
	\end{proof}
	
	\begin{lemma}\label{vf(dl)-is-dl}
		Let $\vf\in N$. Then $\vf(\dl)=\dl$.
	\end{lemma}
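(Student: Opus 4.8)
The plan is to show that $\vf(\dl)-\dl$ lies in $J(I(X,K))$ and at the same time vanishes off the diagonal, hence is $0$.

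First I would unpack membership in $N$. Since $\vf\in N$ means $\wtl\vf=\id$, and each $e_x$ lies in $L_0$, we get $\vf(e_x)=\wtl\vf(e_x)+j_x=e_x+j_x$ with $j_x\in J_1=J(I(X,K))$. Summing over $x\in X$ and using $\dl=\sum_{x\in X}e_x$ gives $\vf(\dl)=\dl+j$ where $j:=\sum_{x\in X}j_x\in J(I(X,K))$. So it remains to prove $j=0$.

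The key observation is that $\dl$, being the identity of the associative algebra $I(X,K)$, is central for the commutator: $[\dl,g]=0$ for every $g\in I(X,K)$. Hence $[\vf(\dl),\vf(g)]=\vf([\dl,g])=0$ for all $g$, and since $\vf$ is bijective this means $\vf(\dl)$ commutes with every element of $I(X,K)$, in particular with each $e_u$, $u\in X$. Now \cref{comuta_e}, applied with $v=u$, yields $\vf(\dl)(x,u)=0$ for all $x<u$ and $\vf(\dl)(u,y)=0$ for all $y>u$, for every $u\in X$. Since $\vf(\dl)=\dl+j$ and $\dl(x,u)=0$ whenever $x<u$, this reads $j(x,u)=0$ whenever $x<u$; letting $u$ range over $X$ we obtain $j(a,b)=0$ for all $a<b$. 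Combined with $j\in J(I(X,K))$, i.e. $j(a,a)=0$ for all $a$, and with $j\in I(X,K)$, we conclude $j=0$, that is, $\vf(\dl)=\dl$.

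I do not expect a genuine obstacle here: the only point to get right is the deduction, via surjectivity of $\vf$ together with \cref{comuta_e}, that the radical part $j$ of $\vf(\dl)$ vanishes off the diagonal. (One could instead argue from \cref{vf(e_x)-in-<e_x>} and \cref{vf(e_x)vf(e_y)-is-zero}, but the route above is shorter and essentially self-contained.)
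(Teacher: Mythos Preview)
Your proof is correct and follows essentially the same strategy as the paper: show that $\vf(\dl)$ is central, hence diagonal, and that its diagonal part equals $\dl$. The only difference is that the paper invokes the known fact that the center of $I(X,K)$ consists of diagonal elements, whereas you derive the vanishing of the off-diagonal part directly from \cref{comuta_e} applied to the idempotents $e_u$, which makes your argument slightly more self-contained.
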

	\begin{proof}
		Since $\dl$ is central, then $\vf(\dl)$ is also central, in particular, it is diagonal, by \cite[Corollary~1.3.15]{SpDo}. On the other hand, $\dl=\sum_{x\in X}e_x$, whence $\vf(\dl)=\vf(\dl)_D=\sum_{x\in X}\vf(e_x)_D=\sum_{x\in X}e_x=\dl$.
	\end{proof}
	
	\begin{corollary}\label{vf(e_x)-idemp}
		Let $\vf\in N$. Then $\vf(e_x)$ is an idempotent for all $x\in X$.
	\end{corollary}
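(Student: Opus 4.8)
The plan is to combine the two facts already established about an element $\vf\in N$: that the images $\vf(e_x)$ are pairwise orthogonal (\cref{vf(e_x)vf(e_y)-is-zero}) and that they add up to the identity. The latter comes for free: since $\dl=\sum_{x\in X}e_x$ and $\vf$ is $K$-linear, \cref{vf(dl)-is-dl} gives $\sum_{x\in X}\vf(e_x)=\vf(\dl)=\dl$. (The sum is finite because $X$ is finite, so there is nothing to worry about on that front.)

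With these two ingredients the argument is a one-line computation. Fix $x\in X$ and multiply the identity $\sum_{y\in X}\vf(e_y)=\dl$ on the left by $\vf(e_x)$, obtaining $\vf(e_x)=\vf(e_x)\dl=\sum_{y\in X}\vf(e_x)\vf(e_y)$. Every summand with $y\ne x$ is zero by \cref{vf(e_x)vf(e_y)-is-zero}, so only the term $\vf(e_x)^2$ survives, and we conclude $\vf(e_x)=\vf(e_x)^2$, which is exactly the assertion.

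There is essentially no obstacle here: the corollary is an immediate consequence of the two preceding results, and the only ``content'' is expanding the product $\vf(e_x)\dl$. The choice of side is irrelevant, since the orthogonality $\vf(e_x)\vf(e_y)=0$ is stated for all $x\ne y$ and hence also covers the pair $(y,x)$; multiplying on the right works verbatim. Together with \cref{vf(e_x)vf(e_y)-is-zero,vf(dl)-is-dl}, this shows that $\{\vf(e_x):x\in X\}$ is a complete family of pairwise orthogonal idempotents, which is the form in which it will be used in the sequel.
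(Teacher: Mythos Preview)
Your proof is correct and follows exactly the same approach as the paper: apply $\vf$ to $\dl=\sum_{x\in X}e_x$ and use \cref{vf(dl)-is-dl} to get $\dl=\sum_{x}\vf(e_x)$, then multiply by $\vf(e_x)$ and use \cref{vf(e_x)vf(e_y)-is-zero} to kill the off-diagonal terms. There is nothing to add.
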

	\begin{proof}
		By \cref{vf(dl)-is-dl} we have $\dl=\vf(\dl)=\sum_{x\in X}\vf(e_x)$. Multiplying this by $\vf(e_x)$ and using \cref{vf(e_x)vf(e_y)-is-zero} we obtain $\vf(e_x)^2=\vf(e_x)$.
	\end{proof}
	
	\begin{proposition}\label{N-is-Inn_1}
		The group $N$ coincides with $\inn_1(I(X,K))$.
	\end{proposition}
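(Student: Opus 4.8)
The plan is to prove the two inclusions $\inn_1(I(X,K))\sst N$ and $N\sst\inn_1(I(X,K))$ separately. Throughout I will use that $N$, being the kernel of the group homomorphism $\vf\mapsto\wtl\vf$ (see \cref{wtl-endo}), is a subgroup of $\laut(I(X,K))$, and that $\vf\in N$ holds exactly when $\vf(e_{xy})-e_{xy}\in J_{i+1}$ whenever $e_{xy}\in L_i$ (with $J_1=J(I(X,K))$); in particular $\vf(e_x)-e_x\in J(I(X,K))$ for every $x$. For $\bt\in I(X,K)$ with $\bt_D=\dl$ I write $\iota_\bt$ for the conjugation $a\mapsto\bt a\bt^{-1}$.

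First I would prove $\inn_1(I(X,K))\sst N$. If $\bt_D=\dl$ then $\bt-\dl\in J(I(X,K))$, which is nilpotent by \cref{J_m}, so $\bt$ is invertible and $\bt^{-1}=\dl-(\bt-\dl)+(\bt-\dl)^2-\cdots$ is a finite sum. For $e_{xy}\in L_i$, every term of $\iota_\bt(e_{xy})-e_{xy}$ is a product with a factor in $J(I(X,K))$ to the left or the right of $e_{xy}$; a direct computation with the standard basis (cf.\ \cref{<e_xy>-description}) shows that left multiplication by an element of $J(I(X,K))$ sends $e_{xy}$ into $\Span_K\{e_{uy}:u<x\}$ and right multiplication into $\Span_K\{e_{xv}:v>y\}$, both contained in $J_{i+1}$ by \cref{J_m}. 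Hence $\iota_\bt(e_{xy})-e_{xy}\in J_{i+1}$, so $\wtl{\iota_\bt}=\id$ and $\iota_\bt\in N$.

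For the reverse inclusion, take $\vf\in N$. By \cref{vf(e_x)-idemp,vf(e_x)vf(e_y)-is-zero,vf(dl)-is-dl} the family $\{\vf(e_x):x\in X\}$ is a complete system of orthogonal idempotents, and by \cref{vf(e_x)-in-<e_x>} each $\vf(e_x)$ lies in $\langle e_x\rangle$ with $\vf(e_x)-e_x\in J(I(X,K))$. Set $\bt=\sum_{x\in X}\vf(e_x)e_x$. From $\vf(e_x)e_x-e_x\in J(I(X,K))$ one gets $\bt_D=\dl$, so $\bt$ is invertible and $\iota_\bt\in\inn_1(I(X,K))$; orthogonality of the $e_x$ together with idempotency and orthogonality of the $\vf(e_x)$ give $\bt e_y=\vf(e_y)e_y=\vf(e_y)\bt$, whence $\iota_\bt(e_y)=\vf(e_y)$ for all $y$. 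Now put $\psi=\iota_\bt^{-1}\circ\vf$. Since $\iota_\bt\in N$ by the first part and $N$ is a subgroup, $\psi\in N$, and $\psi(e_x)=e_x$ for all $x$. It remains to show $\psi=\id$: for $x<y$, applying $\psi$ to the Lie-algebra identities $[e_x,e_{xy}]=e_{xy}=[e_{xy},e_y]$ and writing $\psi(e_{xy})=e_{xy}+j$ with $j\in J(I(X,K))$ and $j(x,y)=0$ (the latter because $\psi\in N$ pins $\psi(e_{xy})$ down modulo $J_{i+1}$), one obtains $j=[e_x,j]=[j,e_y]$; reading these relations off in the standard basis via $[e_x,e_{uv}]=(\delta_{xu}-\delta_{xv})e_{uv}$ confines the support of $j$ to $\{e_{xv}:v>x\}\cap\{e_{uy}:u<y\}=\{e_{xy}\}$, and $j(x,y)=0$ forces $j=0$. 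Thus $\psi$ fixes every $e_x$ and every $e_{xy}$, so $\psi=\id$ and $\vf=\iota_\bt\in\inn_1(I(X,K))$.

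The step I expect to be the main obstacle is the verification that $\psi=\id$. Constructing $\bt$ and checking $\iota_\bt(e_x)=\vf(e_x)$ is a routine manipulation with a complete orthogonal system of idempotents, and the inclusion $\inn_1(I(X,K))\sst N$ is a routine filtration estimate; what genuinely uses the Lie (rather than associative) structure is that an element of $N$ fixing all the $e_x$ must fix all the $e_{xy}$, and this rests on the relations $[e_x,e_{xy}]=e_{xy}=[e_{xy},e_y]$ combined with the fact that membership in $N$ makes the ``error term'' $j$ have vanishing $(x,y)$-entry. I would also be careful with $\ch K=2$ in the support argument: there $\delta_{xu}-\delta_{xv}=1$ only says that exactly one of $u=x$, $v=x$ holds, so it must be combined with the analogous consequence of $j=[j,e_y]$ and with the constraints $j(x,y)=0$, $u\le v$, $x<y$ to rule out every candidate in the support of $j$.
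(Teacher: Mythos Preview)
Your proof is correct and follows essentially the same route as the paper: the inclusion $\inn_1(I(X,K))\sst N$ via the filtration estimate, the construction $\bt=\sum_x\vf(e_x)e_x$ using \cref{vf(e_x)-in-<e_x>,vf(e_x)vf(e_y)-is-zero,vf(dl)-is-dl,vf(e_x)-idemp}, and the reduction to $\psi\in N$ fixing every $e_x$. The only cosmetic difference is in the final step: the paper dispatches $\psi(e_{xy})=e_{xy}$ in one line by applying $\psi$ to the double commutator identity $e_{xy}=[e_x,[e_{xy},e_y]]$, obtaining $\psi(e_{xy})=[e_x,[e_{xy}+j_{xy},e_y]]=(1+j_{xy}(x,y))e_{xy}=e_{xy}$ directly, whereas you separate the two single commutators and do a support analysis (which, as you note, requires a little extra care in characteristic $2$). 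Both arguments are valid and rest on the same idea.
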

	\begin{proof}
		If $\bt=\dl+\rho_1$ with $\rho_1\in J_1$ then $\bt^{-1}=\dl+\rho_2$ with $\rho_2\in J_1$. For any $e_{xy}\in L_i$ we have $\bt e_{xy}\bt^{-1}=(\dl+\rho_1)e_{xy}(\dl+\rho_2)=e_{xy}+\rho$, where $\rho\in J_{i+1}$. Hence, $\inn_1(I(X,K))\sst N$. We will prove the converse inclusion.
		
		Let $\vf\in N$. Define $\bt=\sum_{x\in X}\vf(e_x)e_x$. Since  $\wtl\vf=\id$, we have $\vf(e_x)_D=e_x$, whence $\bt_D=\sum_{x\in X}e_x=\dl$. Clearly, $\bt e_x=\vf(e_x)e_x$. In view of \cref{vf(e_x)-idemp,vf(e_x)vf(e_y)-is-zero} we similarly obtain $\vf(e_x)\bt=\vf(e_x)e_x$. Thus, $\vf(e_x)=\bt e_x\bt^{-1}$. 
		
		Denote by $\xi_\bt$ the conjugation by $\bt$. Then $\xi_\bt\in\inn_1(I(X,K))\sst N$. Set $\psi:=(\xi_\bt)^{-1}\circ\vf$. Then $\psi\in N$ and moreover, $\psi(e_x)=e_x$ for all $x\in X$. Applying $\psi$ to $e_{xy}=[e_x,[e_{xy},e_y]]\in L_i$ with $i>0$, we have
		\begin{align*}
		\psi(e_{xy})=[e_x,[\psi(e_{xy}),e_y]]=[e_x,[e_{xy}+j_{xy},e_y]]=(1+j_{xy}(x,y))e_{xy}=e_{xy},
		\end{align*}
		as $j_{xy}\in J_{i+1}$. Thus, $\psi=\id$ and so $\vf=\xi_\bt$.
	\end{proof}
	
	As an immediate consequence of \cref{wtl-of-wtl-is-wtl,N-is-Inn_1} we have the following.
	\begin{theorem}\label{LAut-cong-Inn_1-rtimes-wtl-LAut}
		The group $\laut(I(X,K))$ is isomorphic to the semidirect product $\inn_1(I(X,K))\rtimes\wtl\laut(I(X,K))$.
	\end{theorem}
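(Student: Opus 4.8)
The plan is to build the isomorphism $\laut(I(X,K))\cong\inn_1(I(X,K))\rtimes\wtl\laut(I(X,K))$ from the data already assembled, using the standard recognition criterion for internal semidirect products. By \cref{wtl-endo} the map $\pi:\vf\mapsto\wtl\vf$ is a group endomorphism of $\laut(I(X,K))$; by definition its image is $\wtl\laut(I(X,K))$ and its kernel is $N$, which by \cref{N-is-Inn_1} equals $\inn_1(I(X,K))$. So first I would record that $\inn_1(I(X,K))$ is a normal subgroup of $\laut(I(X,K))$, being the kernel of a homomorphism.

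Next I would observe that $\pi$ restricts to the identity on $\wtl\laut(I(X,K))$: indeed \cref{wtl-of-wtl-is-wtl} gives $\wtl{\wtl\vf}=\wtl\vf$ for every $\vf$, so $\pi\circ\pi=\pi$, i.e.\ $\pi$ is an idempotent endomorphism (a retraction onto its image). From $\pi|_{\wtl\laut(I(X,K))}=\id$ two things follow immediately. First, $N\cap\wtl\laut(I(X,K))=\{\id\}$, since any $\vf$ in the intersection satisfies $\vf=\wtl\vf=\id$. Second, every $\vf\in\laut(I(X,K))$ factors as $\vf=(\vf\circ(\wtl\vf)^{-1})\circ\wtl\vf$ with $\wtl\vf\in\wtl\laut(I(X,K))$ and $\vf\circ(\wtl\vf)^{-1}\in N$ — the latter membership is exactly the second assertion of \cref{wtl-of-wtl-is-wtl}, or alternatively follows by applying $\pi$ and using idempotency. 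Hence $\laut(I(X,K))=N\cdot\wtl\laut(I(X,K))$.

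Having a normal subgroup $N=\inn_1(I(X,K))$, a subgroup $H=\wtl\laut(I(X,K))$ with $N\cap H=\{\id\}$ and $NH=\laut(I(X,K))$, the internal-semidirect-product recognition theorem yields $\laut(I(X,K))\cong N\rtimes H$ with $H$ acting on $N$ by conjugation inside $\laut(I(X,K))$; this gives precisely the claimed isomorphism. I do not expect any real obstacle here: every ingredient — normality of $N$, $N=\inn_1$, the retraction property $\wtl{\wtl\vf}=\wtl\vf$, and the factorization $\vf=(\vf\circ(\wtl\vf)^{-1})\circ\wtl\vf$ — has already been proved in the excerpt, so the argument is purely the formal assembly of these facts via the splitting-lemma criterion for group extensions. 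The only point worth stating carefully is that $\pi$ being an idempotent endomorphism is the cleanest way to package \cref{wtl-endo} together with \cref{wtl-of-wtl-is-wtl}, and from that packaging both the triviality of the intersection and the splitting come out in one line each.
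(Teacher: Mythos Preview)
Your argument is correct and is precisely the standard splitting argument that the paper has in mind when it says the theorem is ``an immediate consequence of \cref{wtl-of-wtl-is-wtl,N-is-Inn_1}'': you simply make explicit the internal-semidirect-product recognition from the idempotent endomorphism $\vf\mapsto\wtl\vf$ with kernel $\inn_1(I(X,K))$ and image $\wtl\laut(I(X,K))$. There is no meaningful difference in approach.
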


	\section{Elementary Lie automorphisms and admissible pairs}\label{sec-elementary}
	
	Let $\varphi\in\wtl\laut(I(X,K))$ and consider $\theta=\0_\vf$ and $\sg=\sg_\vf$ as in \cref{exypermutation-for-wtl-vf}. Then, for any $e_{xy}\in B$,	
	\begin{align}\label{exypermutation}
	\varphi(e_{xy})=\sigma(x,y)\0(e_{xy}).
	\end{align}
	
	
	\begin{remark}\label{0_vf-inv}
		Let $\varphi\in\wtl\laut(I(X,K))$. Then $\0_{\vf\m}=(\0_\vf)\m$ and $\sg_{\vf\m}(u,v)=\sg_\vf(x,y)\m$, where $\0_\vf(e_{xy}) = e_{uv}$.
	\end{remark}
	
	\begin{lemma}\label{0_vf(e_xy)-and-0_vf(e_yz)}
		Let $\varphi\in\wtl\laut(I(X,K))$, $\0=\0_\vf$, $\sg=\sg_\vf$ and $x<y<z$. Then
		\begin{align*}
		\text{either }\0(e_{xz})&=\0(e_{xy})\0(e_{yz})\text{ in which case }\sg(x,z)=\sg(x,y)\sg(y,z),\\
		\text{or }\0(e_{xz})&=\0(e_{yz})\0(e_{xy})\text{ in which case }\sg(x,z)=-\sg(x,y)\sg(y,z).
		\end{align*}
	\end{lemma}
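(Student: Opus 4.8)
The plan is to apply $\varphi$ to a single Lie-bracket identity relating $e_{xy}$, $e_{yz}$ and $e_{xz}$, and then read off the dichotomy from how the product of the images behaves. Since $x<y<z$, we have $e_{xy}e_{yz}=e_{xz}$ while $e_{yz}e_{xy}=0$ (the latter product would require $z=x$), so $[e_{xy},e_{yz}]=e_{xz}$. Applying $\varphi$, using that it is a Lie automorphism and invoking \cref{exypermutation}, I obtain
\[
\sg(x,z)\0(e_{xz})=\varphi(e_{xz})=[\varphi(e_{xy}),\varphi(e_{yz})]=\sg(x,y)\sg(y,z)\,[\0(e_{xy}),\0(e_{yz})].
\]

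Next I would write $\0(e_{xy})=e_{ab}$ and $\0(e_{yz})=e_{cd}$ with $a<b$ and $c<d$, and expand $[e_{ab},e_{cd}]=\dl_{bc}e_{ad}-\dl_{ad}e_{cb}$. Since $\sg(x,z)\in K^*$, the left-hand side of the displayed identity is a nonzero scalar multiple of the basis element $\0(e_{xz})$, so $[e_{ab},e_{cd}]\ne 0$; hence at least one of $b=c$, $a=d$ holds. These cannot hold simultaneously, since $b=c$ and $a=d$ together with $a<b$ and $c<d$ would force $a<b=c<d=a$, a contradiction. Therefore exactly one of $b=c$, $a=d$ holds.

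If $b=c$ (so $e_{ab}e_{cd}=e_{ad}$ and $e_{cd}e_{ab}=0$), the identity becomes $\sg(x,z)\0(e_{xz})=\sg(x,y)\sg(y,z)e_{ad}$; comparing two nonzero scalar multiples of basis vectors forces $\0(e_{xz})=e_{ad}=\0(e_{xy})\0(e_{yz})$ and $\sg(x,z)=\sg(x,y)\sg(y,z)$. If instead $a=d$ (so $e_{ab}e_{cd}=0$ and $e_{cd}e_{ab}=e_{cb}$), the identity reads $\sg(x,z)\0(e_{xz})=-\sg(x,y)\sg(y,z)e_{cb}$, whence $\0(e_{xz})=e_{cb}=\0(e_{yz})\0(e_{xy})$ and $\sg(x,z)=-\sg(x,y)\sg(y,z)$. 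This exhausts all cases and proves the claim. I expect no real obstacle here beyond verifying that the dichotomy ``$b=c$ versus $a=d$'' is exhaustive (which follows from $[\0(e_{xy}),\0(e_{yz})]\ne 0$) and exclusive (from $a<b<d=a$ being impossible); the remainder is direct substitution.
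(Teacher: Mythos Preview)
Your proof is correct and follows essentially the same approach as the paper: apply $\varphi$ to $[e_{xy},e_{yz}]=e_{xz}$ to obtain $\sg(x,z)\0(e_{xz})=\sg(x,y)\sg(y,z)[\0(e_{xy}),\0(e_{yz})]$, observe that the bracket on the right is nonzero, and split into the two cases according to which of the products $\0(e_{xy})\0(e_{yz})$ or $\0(e_{yz})\0(e_{xy})$ is nonzero. Your argument is slightly more explicit in naming $\0(e_{xy})=e_{ab}$, $\0(e_{yz})=e_{cd}$ and checking exclusivity of the two cases, but this is a purely cosmetic difference.
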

	\begin{proof}
		By \cref{exypermutation} we have
		\begin{align}\label{sg(xz)e_xz=sg(xy)sg(yz)[0(e_xy)0(e_yz)]}
		\sg(x,z)\0(e_{xz})=\vf(e_{xz})=[\vf(e_{xy}),\vf(e_{yz})]=\sg(x,y)\sg(y,z)[\0(e_{xy}),\0(e_{yz})].
		\end{align}
		Since $[\0(e_{xy}),\0(e_{yz})]\ne 0$ and $\0(e_{xy}),\0(e_{yz})\in B$, then either $\0(e_{xy})\0(e_{yz})\ne 0$, in which case $[\0(e_{xy}),\0(e_{yz})]=\0(e_{xy})\0(e_{yz})$, or $\0(e_{yz})\0(e_{xy})\ne 0$, in which case $[\0(e_{xy}),\0(e_{yz})]=-\0(e_{yz})\0(e_{xy})$. Thus, the result follows from \cref{sg(xz)e_xz=sg(xy)sg(yz)[0(e_xy)0(e_yz)]}.
	\end{proof}
	
	\begin{definition}
		Let $\0:B\to B$ be a bijection and $\sg:X^2_<\to K^*$ a map. We say that $\sg$ is \textit{compatible} with $\0$ if $\sg(x,z)=\sg(x,y)\sg(y,z)$ whenever $\0(e_{xz})=\0(e_{xy})\0(e_{yz})$, and $\sg(x,z)=-\sg(x,y)\sg(y,z)$ whenever $\0(e_{xz})=\0(e_{yz})\0(e_{xy})$.
	\end{definition}
	The above definition makes sense, as $e_{xy},e_{uv}\in B$ commute if and only if $e_{xy}e_{uv}=e_{uv}e_{xy}=0$.
	
	\begin{lemma}\label{0_vf-on-a-chain}
		Let $\vf\in\wtl\laut(I(X,K))$ and $\0=\0_\vf$. For any maximal chain $C$: $u_1<u_2<\dots<u_m$ in $X$ there exists a maximal chain $D$: $v_1<v_2<\dots<v_m$ in $X$ such that one of the following two conditions holds:
		\begin{enumerate}
			\item $\0(e_{u_iu_j})=e_{v_iv_j}$ for all $1\le i<j\le m$;\label{0_vf(e_u_iu_j)=e_viv_j}
			\item $\0(e_{u_iu_j})=e_{v_{m-j+1}v_{m-i+1}}$ for all $1\le i<j\le m$.\label{0_vf(e_u_iu_j)=e_m-j+1v_m-i+1}
		\end{enumerate}
	\end{lemma}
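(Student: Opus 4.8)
\emph{Proof plan.} The plan is to track how $\0=\0_\vf$ moves the covering pairs occurring in $C$, show that their images assemble into a single saturated chain carrying a consistent orientation, and finally verify that this chain is maximal in $X$.

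I would start from the observation that, $\vf$ being elementary, $\vf(L_i)\sst L_i$ (\cref{rem_elementary}) and $\vf(e_{xy})=\sg_\vf(x,y)\0(e_{xy})$ by \cref{exypermutation}; hence $\0$ maps $\{e_{xy}:l(\lf x,y\rf)=i\}$ into itself and, being injective on this finite set, permutes it. In particular $\0$ and $\0^{-1}$ send covering pairs to covering pairs, so one may write $\0(e_{u_ku_{k+1}})=e_{a_kb_k}$ with $l(\lf a_k,b_k\rf)=1$ for $1\le k\le m-1$. Next, for each $k$ with $1\le k\le m-2$ I would apply \cref{0_vf(e_xy)-and-0_vf(e_yz)} to $u_k<u_{k+1}<u_{k+2}$, which makes $\0(e_{u_ku_{k+2}})$ equal to $e_{a_kb_k}e_{a_{k+1}b_{k+1}}$ or to $e_{a_{k+1}b_{k+1}}e_{a_kb_k}$; since the first is nonzero only if $b_k=a_{k+1}$, the second only if $b_{k+1}=a_k$, and $b_k=a_{k+1}$, $b_{k+1}=a_k$ cannot hold together (they give $a_k<b_k=a_{k+1}<b_{k+1}=a_k$), exactly one alternative occurs. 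Call the triple \emph{direct} in the first case and \emph{reversed} in the second (then $\0(e_{u_ku_{k+2}})$ equals $e_{a_kb_{k+1}}$, resp.\ $e_{a_{k+1}b_k}$).

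I expect the key step to be proving that all $m-2$ triples are of the same kind. To do this, suppose the triple at $k$ is direct and the one at $k+1$ reversed (the opposite case being symmetric). Then $b_k=a_{k+1}$, $b_{k+2}=a_{k+1}$ and $\0(e_{u_{k+1}u_{k+3}})=e_{a_{k+2}b_{k+1}}$, so \cref{0_vf(e_xy)-and-0_vf(e_yz)} applied to $u_k<u_{k+1}<u_{k+3}$ would make $\0(e_{u_ku_{k+3}})$ equal to $e_{a_kb_k}e_{a_{k+2}b_{k+1}}$ or to $e_{a_{k+2}b_{k+1}}e_{a_kb_k}$; but $b_k=a_{k+1}\ne a_{k+2}$ (as $a_{k+2}<b_{k+2}=a_{k+1}$) kills the first product, while $b_{k+1}\ne a_k$ (as $a_k<b_k=a_{k+1}<b_{k+1}$) kills the second, contradicting $\0(e_{u_ku_{k+3}})\ne0$. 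Having all triples of one kind, I would build $D$ explicitly. If they are all direct, then $b_k=a_{k+1}$ for every $k$, so putting $v_1=a_1$ and $v_{k+1}=b_k$ ($1\le k\le m-1$) gives a saturated chain $v_1<v_2<\dots<v_m$ with $\0(e_{u_ku_{k+1}})=e_{v_kv_{k+1}}$; an induction on $j-i$ using \cref{0_vf(e_xy)-and-0_vf(e_yz)} on $u_i<u_{j-1}<u_j$ (the reversed product $e_{v_{j-1}v_j}e_{v_iv_{j-1}}$ vanishes since $v_i\ne v_j$) upgrades this to $\0(e_{u_iu_j})=e_{v_iv_j}$ for all $i<j$, i.e.\ \cref{0_vf(e_u_iu_j)=e_viv_j}. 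If all triples are reversed, I would instead set $v_k=a_{m-k}$ ($1\le k\le m-1$) and $v_m=b_1$, and the analogous (reversed) bookkeeping gives a saturated chain with $\0(e_{u_iu_j})=e_{v_{m-j+1}v_{m-i+1}}$, i.e.\ \cref{0_vf(e_u_iu_j)=e_m-j+1v_m-i+1}.

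Finally I would check that $D$ is maximal, which I regard as the second delicate point. In either case $\0(e_{u_1u_m})=e_{v_1v_m}$. If $v_1$ were not minimal, choose a cover $w$ (so $l(\lf w,v_1\rf)=1$); then $e_{st}:=\0^{-1}(e_{wv_1})$ is a covering pair, and by \cref{exypermutation}
\[
\vf([e_{st},e_{u_1u_m}])=\sg_\vf(s,t)\sg_\vf(u_1,u_m)\,[e_{wv_1},e_{v_1v_m}]=\sg_\vf(s,t)\sg_\vf(u_1,u_m)\,e_{wv_m}\ne0,
\]
so $[e_{st},e_{u_1u_m}]\ne0$, forcing $t=u_1$ or $s=u_m$; the former gives $s<u_1$, contradicting minimality of $u_1$, the latter gives $u_m<t$, contradicting maximality of $u_m$. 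Hence $v_1$ is minimal, and a symmetric computation with $\0^{-1}(e_{v_mw'})$ for a cover $w'$ of $v_m$ shows $v_m$ is maximal; since $D$ is saturated of length $m-1$, it is a maximal chain of $X$. (The case $m=1$ is trivial, and for $m=2$ there are no triples and one proceeds directly from $\0(e_{u_1u_2})=e_{v_1v_2}$.)
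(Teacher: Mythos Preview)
Your argument is correct. The chain-building step is close in spirit to the paper's: both rely on repeated applications of \cref{0_vf(e_xy)-and-0_vf(e_yz)} to non-consecutive triples in $C$. The paper proceeds by a direct induction, adjoining one new vertex at a time (using the pair of pivots $u_2<u_3<u_4$ and $u_1<u_3<u_4$ to force $\0(e_{u_3u_4})$ to extend the already-built chain in the correct direction), whereas you first isolate the dichotomy by proving that all consecutive triples are ``direct'' or all ``reversed'' via $u_k<u_{k+1}<u_{k+3}$, and only then assemble $D$. The two arguments are essentially equivalent reorganizations of the same bookkeeping.

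The genuine difference is in the maximality of $D$. The paper invokes \cref{Z}: since $u_1$ is minimal and $u_m$ is maximal, $e_{u_1u_m}$ lies in the center $Z$ of $J(I(X,K))$, hence so does $\vf(e_{u_1u_m})=\sg(u_1,u_m)e_{v_1v_m}$, and \cref{Z} again gives that $v_1$ is minimal and $v_m$ is maximal. You instead bypass \cref{Z} entirely: assuming some cover $w\lessdot v_1$ exists, you pull $e_{wv_1}$ back through $\0^{-1}$ and compute the commutator with $e_{u_1u_m}$ directly, obtaining a contradiction with the extremality of $u_1,u_m$. Your route is self-contained and does not require the description of $Z$, while the paper's is a one-line consequence once \cref{Z} is in hand; both are perfectly valid.
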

	\begin{proof}
		We first construct the chain $v_1<v_2<\dots<v_m$ and then prove that it is maximal. If $m=2$, then $\0(e_{u_1u_2})=e_{v_1v_2}$ by \cref{exypermutation}. Let $m>2$ and consider $u_1<u_2<u_3$. By \cref{0_vf(e_xy)-and-0_vf(e_yz)} we have two cases.
		
		\textbf{Case 1.} There are $v_1<v_2<v_3$ such that $\0(e_{u_1u_2})=e_{v_1v_2}$, $\0(e_{u_2u_3})=e_{v_2v_3}$ and $\0(e_{u_1u_3})=e_{v_1v_3}$. If $m=3$, then we are done. If $m>3$, then there are $u<v$, such that $\0(e_{u_3u_4})=e_{uv}$. Applying \cref{0_vf(e_xy)-and-0_vf(e_yz)} to $u_2<u_3<u_4$, we conclude that either $u=v_3$ or $v=v_2$. Now, applying the same lemma to $u_1<u_3<u_4$, we have either $u=v_3$ or $v=v_1$. Since $v_1\ne v_2$, then $u=v_3$. Hence, we may define $v_4:=v$, and we have $v_1<v_2<v_3<v_4$ and $\0(e_{u_3u_4})=e_{v_3v_4}$, $\0(e_{u_2u_4})=e_{v_2v_4}$, $\0(e_{u_1u_4})=e_{v_1v_4}$ by \cref{0_vf(e_xy)-and-0_vf(e_yz)}. Repeating the same argument, we consecutively construct $v_1<v_2<\dots<v_m$ such that $\0(e_{u_iu_j})=e_{v_iv_j}$ for all $1\le i<j\le m$. 
		
		\textbf{Case 2.} 
		There are $v_{m-2}<v_{m-1}<v_m$ such that $\0(e_{u_1u_2})=e_{v_{m-1}v_m}$, $\0(e_{u_2u_3})=e_{v_{m-2}v_{m-1}}$ and $\0(e_{u_1u_3})=e_{v_{m-2}v_m}$. This case is analogous to Case~1. Repeating a similar argument, we consecutively construct $v_1<v_2<\dots<v_m$ such that $\0(e_{u_iu_j})=e_{v_{m-j+1}v_{m-i+1}}$ for all $1\le i<j\le m$.
		
		Now we prove that the constructed chain $v_1<v_2<\dots<v_m$ is maximal. Since $u_1<u_2<\dots<u_m$ is maximal, then $l(\lf u_i,u_{i+1}\rf)=1$ for all $i=1,\dots,m-1$. The latter is equivalent to $e_{u_iu_{i+1}}\in B_1-B_2$. If $\0(e_{u_iu_{i+1}})=e_{v_iv_{i+1}}$, then $e_{v_iv_{i+1}}\in B_1-B_2$ and hence $l(\lf v_i,v_{i+1}\rf)=1$. If $\0(e_{u_iu_{i+1}})=e_{v_{m-i}v_{m-i+1}}$, then we similarly obtain $l(\lf v_{m-i},v_{m-i+1}\rf)=1$ for all $i=1,\dots,m-1$. Both cases imply that there cannot exist $v$ such that $v_i<v<v_{i+1}$ for some $i=1,\dots,m-1$. There cannot also exist $v_0$ such that $v_0<v_1$ and $v_{m+1}$ such that $v_{m+1}>v_m$. Indeed, $u_1$ is minimal and $u_m$ is maximal in $X$, so that $e_{u_1u_m}\in Z$ by \cref{Z}. Hence $e_{v_1v_m}=\sg(u_1,u_m)\m\vf(e_{u_1u_m})\in Z$. Thus, $v_1$ is minimal and $v_m$ is maximal.
	\end{proof}
	
	\begin{definition}
		Let $\0:B\to B$ be a bijection and $C$ a maximal chain in $X$. We say that $\0$ \textit{is increasing (resp. decreasing) on $C$} if \cref{0_vf-on-a-chain}\cref{0_vf(e_u_iu_j)=e_viv_j} (resp. \cref{0_vf-on-a-chain}\cref{0_vf(e_u_iu_j)=e_m-j+1v_m-i+1}) holds. Moreover, we say that $\0$ \textit{is monotone on maximal chains in $X$} if, for any maximal chain $C$, $\0$ is increasing or decreasing on $C$.
	\end{definition}
	
	\begin{remark}\label{0-inv-monotone}
		If a bijection $\0:B\to B$ is monotone on maximal chains in $X$, then so is $\0^{-1}$, since $X$ is finite.
	\end{remark}
	
	\begin{lemma}\label{vf(e_z)-on-diagonal}
		Let $\vf\in\wtl\laut(I(X,K))$, $\0=\0_\vf$, $x<y$ and $\0(e_{xy})=e_{uv}$. Then for all $z\in X$:
		\begin{align}\label{vf(e_z)_uu-vf(e_z)_vv=pm-one-or-zero}
		\vf(e_z)(v,v)-\vf(e_z)(u,u)&=
		\begin{cases}
		-1, & z=x,\\
		1,& z=y,\\
		0, & z\not\in\{x,y\}.
		\end{cases}
		\end{align}
	\end{lemma}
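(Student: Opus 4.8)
The plan is to read off the claimed diagonal relations directly from the action of $\vf$ on the idempotents $e_z$, using the commutators of diagonal elements with $e_{uv}$. The starting point is the elementary identity, valid for every $z\in X$,
\begin{align*}
[e_z,e_{xy}]=(\dl_{zx}-\dl_{zy})e_{xy},
\end{align*}
where $\dl$ denotes the Kronecker delta (indeed $e_ze_{xy}=\dl_{zx}e_{xy}$ and $e_{xy}e_z=\dl_{zy}e_{xy}$). Applying $\vf$, using \cref{exypermutation} and the hypothesis $\0(e_{xy})=e_{uv}$, the left-hand side becomes $[\vf(e_z),\sg(x,y)e_{uv}]=\sg(x,y)[\vf(e_z),e_{uv}]$ and the right-hand side becomes $\sg(x,y)(\dl_{zx}-\dl_{zy})e_{uv}$. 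Since $\sg(x,y)\in K^*$, cancelling it gives
\begin{align*}
[\vf(e_z),e_{uv}]=(\dl_{zx}-\dl_{zy})e_{uv}.
\end{align*}

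Next I would compute the left-hand side using that $\vf$ is elementary. By \cref{rem_elementary}, $\vf(e_z)\in\vf(L_0)\sst L_0=D(X,K)$, so $\vf(e_z)=\sum_{w\in X}\vf(e_z)(w,w)e_w$ is diagonal; then, using $[e_w,e_{uv}]=(\dl_{wu}-\dl_{wv})e_{uv}$ and $u\ne v$,
\begin{align*}
[\vf(e_z),e_{uv}]=\sum_{w\in X}\vf(e_z)(w,w)(\dl_{wu}-\dl_{wv})e_{uv}=\bigl(\vf(e_z)(u,u)-\vf(e_z)(v,v)\bigr)e_{uv}.
\end{align*}

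Comparing the two expressions for $[\vf(e_z),e_{uv}]$ and equating the coefficients of $e_{uv}$ yields $\vf(e_z)(u,u)-\vf(e_z)(v,v)=\dl_{zx}-\dl_{zy}$, which is precisely \cref{vf(e_z)_uu-vf(e_z)_vv=pm-one-or-zero} after moving the sign to the other side. I do not expect any serious obstacle here — this is essentially a bookkeeping argument — the only steps worth spelling out being the appeal to \cref{rem_elementary} (so that $\vf(e_z)$ is diagonal and hence its commutator with $e_{uv}$ is governed only by its $u$- and $v$-entries) and the cancellation of the invertible scalar $\sg(x,y)$.
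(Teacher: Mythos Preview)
Your proof is correct and follows essentially the same approach as the paper: apply $\vf$ to the commutator $[e_z,e_{xy}]$, use that $\vf(e_z)\in D(X,K)$ (since $\vf$ is elementary) to compute $[\vf(e_z),e_{uv}]$ in terms of the diagonal entries at $u$ and $v$, and cancel the invertible scalar $\sg(x,y)$. The paper treats the three cases $z=x$, $z=y$, $z\notin\{x,y\}$ separately, whereas you handle them uniformly via the Kronecker-delta identity $[e_z,e_{xy}]=(\dl_{zx}-\dl_{zy})e_{xy}$, but this is purely cosmetic.
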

	\begin{proof}
		Let $e_{xy}\in B_m-B_{m+1}$. By \cref{exypermutation}, we have $\vf(e_{xy})=\sg(x,y) e_{uv}$, where $\sg(x,y)\in K^*$ and $e_{uv}\in B_m-B_{m+1}$. Then applying $\vf$ to $[e_x,e_{xy}]=e_{xy}$ and using the fact that $\vf(e_x)\in D(X,K)$, we obtain 
		\begin{align}
		\sg(x,y) e_{uv} = \sg(x,y)[\vf(e_x),e_{uv}]= \sg(x,y)(\vf(e_x)(u,u)-\vf(e_x)(v,v))e_{uv}.\label{sg.e_uv=sg[vf(e_x)_D_e_uv]+smth}
		\end{align}
		Since $\sg(x,y)\ne 0$, we deduce from \cref{sg.e_uv=sg[vf(e_x)_D_e_uv]+smth} that $\vf(e_x)(u,u)-\vf(e_x)(v,v)=1$.
		
		Similarly one gets $\vf(e_y)(u,u)-\vf(e_y)(v,v)=-1$ from $[e_y,e_{xy}]=-e_{xy}$ and $\vf(e_z)(u,u)-\vf(e_z)(v,v)=0$ ($z\not\in\{x,y\}$) from $[e_z,e_{xy}]=0$.
	\end{proof}
	
	\begin{lemma}\label{0_vf=0_psi-and-vf(e_z)_uu=psi(e_z)_uu}
		Fix an arbitrary $u_0\in X$. Given $\vf,\psi\in\wtl\laut(I(X,K))$,
		if $\0_\vf=\0_\psi$ and $\vf(e_z)(u_0,u_0)=\psi(e_z)(u_0,u_0)$ for all $z\in X$, then $\vf(e_z)=\psi(e_z)$ for all $z\in X$.
	\end{lemma}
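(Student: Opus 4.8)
The plan is to show that $\vf(e_z)$ and $\psi(e_z)$ coincide on the whole diagonal of $X$; this will suffice because, since $e_z\in L_0$ and $\vf,\psi$ are elementary, $\vf(e_z)\in\vf(L_0)\sst L_0=D(X,K)$ and likewise $\psi(e_z)\in D(X,K)$ by \cref{rem_elementary}. Hence $g_z:=\vf(e_z)-\psi(e_z)$ is a diagonal element, and it will be zero as soon as $g_z(w,w)=0$ for all $w\in X$.

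First I would prove a local comparison. Put $\0:=\0_\vf=\0_\psi$. Given any $u<v$ in $X$, the element $e_{uv}$ lies in $B$, so there is a (unique) $e_{xy}\in B$, necessarily with $x<y$, such that $\0(e_{xy})=e_{uv}$. Applying \cref{vf(e_z)-on-diagonal} to $\vf$ and then, verbatim, to $\psi$ (both maps have the same $\0$, hence the same associated pair $x<y$), we get that for each $z\in X$ both $\vf(e_z)(v,v)-\vf(e_z)(u,u)$ and $\psi(e_z)(v,v)-\psi(e_z)(u,u)$ equal $-1$, $1$ or $0$ according as $z=x$, $z=y$, or $z\notin\{x,y\}$. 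Subtracting the two identities gives $g_z(v,v)=g_z(u,u)$ for all $z\in X$ whenever $u<v$ in $X$.

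The second step is to propagate this equality using connectedness of $X$. Fix $w\in X$ and choose a path $u_0=x_0,x_1,\dots,x_m=w$ in $X$; by the definition of a path, consecutive vertices $x_i,x_{i+1}$ are comparable, so the first step yields $g_z(x_i,x_i)=g_z(x_{i+1},x_{i+1})$ for each $i$, and hence $g_z(w,w)=g_z(u_0,u_0)$. By hypothesis $g_z(u_0,u_0)=\vf(e_z)(u_0,u_0)-\psi(e_z)(u_0,u_0)=0$, so $g_z(w,w)=0$. As $w\in X$ was arbitrary and $g_z$ is diagonal, $g_z=0$, that is, $\vf(e_z)=\psi(e_z)$ for every $z\in X$.

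I do not expect a real obstacle here: the whole argument is a ``spread from one vertex along the comparability graph'' argument built on \cref{vf(e_z)-on-diagonal}. The only points requiring care are that the hypothesis $\0_\vf=\0_\psi$ is exactly what forces the two instances of \cref{vf(e_z)-on-diagonal} to share the same pair $x<y$, so that the $\pm1$ terms cancel upon subtraction, and that $g_z$ is diagonal (which is where elementarity of $\vf$ and $\psi$ is used).
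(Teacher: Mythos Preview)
Your proof is correct and follows essentially the same route as the paper's: both use \cref{vf(e_z)-on-diagonal} to see that the increment $\vf(e_z)(v,v)-\vf(e_z)(u,u)$ along a comparable pair depends only on $\theta$, and then propagate from $u_0$ along a walk/path by connectedness, finishing with the observation that $\vf(e_z),\psi(e_z)\in L_0=D(X,K)$. The only cosmetic difference is that the paper writes out the telescoping sum $\vf(e_z)(v,v)=\vf(e_z)(u_0,u_0)+\sum_i\Delta_{\Gamma,i}(z)$ explicitly (a formula reused later in \cref{existense-of-tau_sg_0_c}), whereas you subtract first and work with $g_z=\vf(e_z)-\psi(e_z)$ directly.
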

	\begin{proof}
		Given $v\in X$, choose a walk $\G:u_0,u_1,\dots,u_m=v$ from $u_0$ to $v$. For each $0\le i\le m-1$ let $v_i<w_i$ be such that
		
		\begin{align}\label{vf(e_v_iw_i)=e_u_iu_i+1}
		\0_{\vf}(e_{v_iw_i})=
		\begin{cases}
		e_{u_iu_{i+1}}, & u_i<u_{i+1},\\
		e_{u_{i+1}u_i}, & u_i>u_{i+1}.
		\end{cases}
		\end{align}
		Then
		\begin{align}\label{vf(e_u)_vv=vf(e_u)_u_0u_0+sum}
		\vf(e_z)(v,v)=\vf(e_z)(u_0,u_0)+\sum_{i=0}^{m-1}\Delta_{\G,i}(z),
		\end{align}
		where $\Delta_{\G,i}(z)=\vf(e_z)(u_{i+1},u_{i+1})-\vf(e_z)(u_i,u_i)$. By \cref{vf(e_z)-on-diagonal},
		\begin{align}\label{vf(e_z)_u_(i+1)u_(i+1)-vf(e_z)_u_iu_i}
		\Delta_{\G,i}(z) & =
		\begin{cases}
		-1, & (u_i<u_{i+1}\wedge z=v_i)\vee(u_i>u_{i+1}\wedge z=w_i),\\
		1,& (u_i<u_{i+1}\wedge z=w_i)\vee(u_i>u_{i+1}\wedge z=v_i),\\
		0, & z\not\in\{v_i,w_i\}.
		\end{cases}
		\end{align}
		Since the right-hand side of \cref{vf(e_u)_vv=vf(e_u)_u_0u_0+sum} depends only on $\0_\vf$
		and $\vf(e_z)(u_0,u_0)$, we have $\vf(e_z)(v,v)=\psi(e_z)(v,v)$. This proves $\vf(e_z)=\psi(e_z)$, as $\vf(e_z),\psi(e_z)\in L_0$.
	\end{proof}
	
	\begin{definition}
		Let $\0:B\to B$ be a bijection and $\Gamma: u_0,u_1,\dots,u_m=u_0$ a closed walk in $X$. We introduce the following $4$ functions $X\to\mathbb{N}$:
		\begin{align*}
		s^+_{\0,\G}(z)&=|\{i: u_i<u_{i+1}\text{ and }\exists w>z\text{ such that }\0(e_{zw})=e_{u_iu_{i+1}}\}|,\\
		s^-_{\0,\G}(z)&=|\{i: u_i>u_{i+1}\text{ and }\exists w>z\text{ such that }\0(e_{zw})=e_{u_{i+1}u_i}\}|,\\
		t^+_{\0,\G}(z)&=|\{i: u_i<u_{i+1}\text{ and }\exists w<z\text{ such that }\0(e_{wz})=e_{u_iu_{i+1}}\}|,\\
		t^-_{\0,\G}(z)&=|\{i: u_i>u_{i+1}\text{ and }\exists w<z\text{ such that }\0(e_{wz})=e_{u_{i+1}u_i}\}|.
		\end{align*}
	\end{definition}
	
	\begin{remark}\label{0_vf-is-admissible}
		Let $\vf\in\wtl\laut(I(X,K))$ and $\0=\0_\vf$. Then for any closed walk $\Gamma:u_0,u_1,\dots,u_m=u_0$ in $X$ and for all $z\in X$
		\begin{align}\label{s^+-s^-=t^+-t^-}
		s^+_{\0,\G}(z)-s^-_{\0,\G}(z)=t^+_{\0,\G}(z)-t^-_{\0,\G}(z).
		\end{align}
		Indeed, by \cref{vf(e_z)_uu-vf(e_z)_vv=pm-one-or-zero} we have
		$
		0=\sum_{i=0}^{m-1}(\vf(e_z)(u_{i+1},u_{i+1})-\vf(e_z)(u_i,u_i))
		=-s^+_{\0,\G}(z)+s^-_{\0,\G}(z)+t^+_{\0,\G}(z)-t^-_{\0,\G}(z).
		$
	\end{remark}
	
	\begin{definition}\label{defn-admissible}
		A bijection $\0:B\to B$ is said to be \textit{admissible} if \cref{s^+-s^-=t^+-t^-} holds for any closed walk $\G:u_0,u_1,\dots,u_m=u_0$ in $X$ and for all $z\in X$. In particular, if $X$ is a tree, then any bijection $\0:B\to B$ is admissible.
	\end{definition}
	
	\begin{example}
		Let $X$ be a $2$-crown, i.e., $X=\{1,2,3,4\}$ with the following Hasse diagram.
		\begin{center}
			\begin{tikzpicture}
			\draw [fill=black] (-.3,0) node{$1$}   (-.3,2) node{$3$}  (2.3,0) node{$2$}  (2.3,2) node{$4$}
			(1.02,-0.5) (0,0) circle(0.05)--(0,1)--(0,2)circle(0.05)--(1,1)--(2,0)circle(0.05)--(2,1)--(2,2)circle(0.05) --(1,1)--(0,0);
			\end{tikzpicture}
		\end{center}
		Consider $\0:B\to B$ such that $\0(e_{13})=e_{14}$, $\0(e_{14})=e_{13}$, $\0(e_{23})=e_{23}$ and $\0(e_{24})=e_{24}$. Then $\0$ is not admissible. Indeed, for the cycle $\G: 1<3>2<4>1$ we have $s^+_{\0,\G}(3)=s^-_{\0,\G}(3)=t^+_{\0,\G}(3)=0$ and $t^-_{\0,\G}(3)=2$, so that $s^+_{\0,\G}(3)-s^-_{\0,\G}(3)\ne t^+_{\0,\G}(3)-t^-_{\0,\G}(3)$. Observe, however, that $s^+_{\0,\G}(1)=s^-_{\0,\G}(1)=1$ and $t^+_{\0,\G}(1)=t^-_{\0,\G}(1)=0$, so \cref{s^+-s^-=t^+-t^-} is not invariant under the choice of $z\in X$.
	\end{example}
	
	\begin{lemma}
		In \cref{defn-admissible} it suffices to verify \cref{s^+-s^-=t^+-t^-} for any cycle $\G$ in $X$.
	\end{lemma}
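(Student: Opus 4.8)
The plan is to rewrite the balance condition \eqref{s^+-s^-=t^+-t^-} additively and then run an induction that breaks an arbitrary closed walk into cycles. For a walk $\G: u_0,u_1,\dots,u_m$ in $X$, closed or not, and $z\in X$, let $s^\pm_{\0,\G}(z)$ and $t^\pm_{\0,\G}(z)$ be given by the same four formulas as for closed walks (these make sense verbatim for any walk), and set
\[
F_\G(z):=s^+_{\0,\G}(z)-s^-_{\0,\G}(z)-t^+_{\0,\G}(z)+t^-_{\0,\G}(z),
\]
so that, by \cref{defn-admissible}, $\0$ is admissible exactly when $F_\G(z)=0$ for every closed walk $\G$ and every $z\in X$; the lemma asks us to deduce this from its restriction to cycles. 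The one idea needed is that each of the four counts $s^\pm_{\0,\G}(z)$, $t^\pm_{\0,\G}(z)$ is the number of indices $i\in\{0,\dots,m-1\}$ satisfying a condition that refers solely to the single step $(u_i,u_{i+1})$ — its direction and the basis element $e_{u_iu_{i+1}}$ it determines — together with $\0$ and $z$, and never to the remaining steps of $\G$. Hence $F$ is additive along concatenation: if $\G$ results from following a walk $\alpha$ and then a walk $\beta$ that starts where $\alpha$ ends, then $F_\G=F_\alpha+F_\beta$ as functions on $X$, and more generally $F$ adds over any splitting of $\G$ into consecutive sub-walks.

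Granting this, I would prove $F_\G\equiv 0$ for every closed walk $\G$ by induction on its length $m$. For $m=0$ there is nothing to prove; there are no closed walks of length $1$ or $3$, since consecutive vertices of a walk are distinct (because $l(\lf u_i,u_{i+1}\rf)=1$), which excludes $m=1$ and would force a closed walk of length $3$ to be a triangle in the Hasse diagram of $X$ — impossible, as three mutually comparable elements form a chain. For $m=2$ the walk is a backtrack $u_0,u_1,u_0$; writing $e_{ab}$ for the $\0$-preimage of the cover basis element determined by the edge $\{u_0,u_1\}$, one reads off from the definitions that $s^+_{\0,\G}=s^-_{\0,\G}$ (each equal to $1$ at $a$ and to $0$ elsewhere) and $t^+_{\0,\G}=t^-_{\0,\G}$ (each equal to $1$ at $b$ and to $0$ elsewhere), so $F_\G\equiv 0$.

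For the inductive step, let $m\ge 4$ and assume $F\equiv 0$ on all shorter closed walks. If $\G$ is a cycle, then $F_\G\equiv 0$ by hypothesis. Otherwise, by the definition of a cycle there are indices $0\le i<j\le m$ with $u_i=u_j$ and $(i,j)\ne(0,m)$; distinctness of consecutive vertices forces $j\ge i+2$, and $(i,j)\ne(0,m)$ forces $j-i<m$. Cutting $\G$ at these two occurrences of the same vertex produces two closed walks
\[
\G_1: u_i,u_{i+1},\dots,u_j \qquad\text{and}\qquad \G_2: u_0,\dots,u_i,u_{j+1},\dots,u_m,
\]
both legitimate — in $\G_2$ the passage from $u_i$ to $u_{j+1}$ is a valid step because $u_i=u_j$ is comparable to $u_{j+1}$ across an interval of length $1$ — and each of length strictly less than $m$. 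Since $\G$ is the concatenation of $(u_0,\dots,u_i)$, then $\G_1$, then $(u_j,u_{j+1},\dots,u_m)$, while $\G_2$ is the concatenation of the first and last of these three, additivity gives $F_\G=F_{\G_1}+F_{\G_2}$, and both summands vanish by the induction hypothesis. This closes the induction and proves that \eqref{s^+-s^-=t^+-t^-} for all closed walks follows from \eqref{s^+-s^-=t^+-t^-} for all cycles.

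I do not anticipate a real obstacle. The only point requiring some care is the bookkeeping in the inductive step — checking that $\G_1$ and $\G_2$ are honest closed walks strictly shorter than $\G$, which uses $(i,j)\ne(0,m)$, the distinctness of consecutive vertices, and the observation that the complementary length $m-(j-i)$ cannot equal $1$ — together with the two elementary base-case facts: that the Hasse diagram of $X$ has no closed walk of length $1$ or $3$, and that a backtrack contributes nothing to $F$. Morally the lemma just records that, as far as the admissibility condition is concerned, a closed walk is a sum of cycles, which is precisely the additivity of $F$ together with the standard reduction of a closed walk at a repeated vertex.
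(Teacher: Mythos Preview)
Your proof is correct and follows essentially the same approach as the paper: both arguments split a closed walk at a repeated vertex into two strictly simpler closed sub-walks, use additivity of the four counts over such a splitting, handle the backtrack $m=2$ directly, and observe that $m=3$ cannot occur since the Hasse diagram has no triangles. The only cosmetic difference is the induction variable---you induct on the length $m$ while the paper inducts on the number of repetitions (pairs $(i,j)$ with $i<j-1$ and $u_i=u_j$)---and your packaging via the function $F_\G$ is slightly more explicit about additivity than the paper's direct statement that $s^{\pm}_{\0,\G}=s^{\pm}_{\0,\G'}+s^{\pm}_{\0,\G''}$.
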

	\begin{proof}
		Let us fix a bijection $\0:B\to B$ and $z\in X$. Assume that \cref{s^+-s^-=t^+-t^-} is valid for any cycle. We will prove that it also holds for any closed walk $\G:u_0,u_1,\dots,u_m=u_0$. We call a pair of indices $(i,j)$, $i<j-1$,  a repetition if $u_i=u_j$. The proof will be by induction on the number of repetitions in $\G$. If $\G$ has only one repetition $(0,m)$ and $m\ge 4$, then it is a cycle, so \cref{s^+-s^-=t^+-t^-} holds by assumption. The case $m=3$ is impossible, since the Hasse diagram of a poset has no triangles. If $m=2$, then $\G$ is of the form $x<y>x$ or $x>y<x$. In this case it always satisfies \cref{s^+-s^-=t^+-t^-}. Indeed, let $\G:x<y>x$. If $\0(e_{zw})=e_{xy}$ for some $w>z$, then $s^{\pm}_{\0,\G}(z)=1$ and $t^{\pm}_{\0,\G}(z)=0$; and if $\0(e_{wz})=e_{xy}$ for some $w<z$, then $s^{\pm}_{\0,\G}(z)=0$ and $t^{\pm}_{\0,\G}(z)=1$; otherwise $s^{\pm}_{\0,\G}(z)=t^{\pm}_{\0,\G}(z)=0$. The case $\G:x>y<x$ is analogous.
		
		Now suppose that $\G$ has a repetition different from $(0,m)$, say $(i,j)$, $i<j-1$. If $i>0$ and $j<m$, then $\G$ splits into two closed walks $\G':u_0,\dots,u_i,u_{j+1},\dots,u_m$ and $\G'':u_i,\dots,u_j$ whose numbers of repetitions are strictly less than that of $\Gamma$. Hence, by induction hypothesis, $\G'$ and $\G''$ satisfy \cref{s^+-s^-=t^+-t^-}. Since, for any $0\le i<m$, either $u_i,u_{i+1}\in\G'$ or $u_i,u_{i+1}\in\G''$, then $s^{\pm}_{\0,\G}(z)=s^{\pm}_{\0,\G'}(z)+s^{\pm}_{\0,\G''}(z)$ and $t^{\pm}_{\0,\G}(z)=t^{\pm}_{\0,\G'}(z)+t^{\pm}_{\0,\G''}(z)$, so \cref{s^+-s^-=t^+-t^-} for $\G$ follows from \cref{s^+-s^-=t^+-t^-} for $\G'$ and $\G''$. If $i=0$, then $j<m-1$, and we apply the same argument to $\G':u_0,\dots,u_j$ and $\G'':u_j,\dots,u_m$. The case $j=m$ and $i>1$ is similar.
	\end{proof}
	
	\begin{remark}\label{sum-vf(e_z)(uu)-nonzero}
		Let $\vf\in\wtl\laut(I(X,K))$ and $u,v\in X$. Then $\sum_{z\in X}\vf(e_z)(u,u)=\sum_{z\in X}\vf(e_z)(v,v)\ne 0$.
		
		Indeed, observe that $\sum_{z\in X}e_z=\delta$, so that $\sum_{z\in X}\vf(e_z)=\vf(\delta)$. Since $\dl$ is central, so is $\vf(\delta)$, and it is obviously non-zero, therefore $\vf(\dl)=k\dl$ for some $k\in K^*$, by \cite[Corollary 1.3.15]{SpDo}.
	\end{remark}
	
	\begin{definition}
		Let $\0:B\to B$ be an admissible bijection and $\sg:X^2_<\to K^*$ compatible with $\0$. We say that $\vf\in\wtl\laut(I(X,K))$ \textit{induces} the pair $(\0,\sg)$ if \cref{exypermutation} holds for all $x<y$.
	\end{definition}
	
	\begin{lemma}\label{existense-of-tau_sg_0_c}
		Let $\0:B\to B$ be an admissible bijection which is monotone on maximal chains in $X$ and $\sg:X^2_<\to K^*$ compatible with $\0$. Then there exists $\vf\in\wtl\laut(I(X,K))$ inducing $(\0,\sg)$.	
	\end{lemma}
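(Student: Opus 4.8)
The plan is to construct $\varphi$ directly on the standard basis and then check, in turn, that it is well defined, that it is a Lie homomorphism, and that it is bijective. On $B$ we are forced to put $\varphi(e_{xy})=\sg(x,y)\0(e_{xy})$ for $x<y$; as $\0$ permutes $B$ and $\sg$ is $K^*$-valued, this already makes the restriction of $\varphi$ to $J(I(X,K))$ a linear automorphism of $J(I(X,K))$ sending each $L_i$ to itself. To define the diagonal part, fix a base point $u_0\in X$, set $\varphi(e_z)(u_0,u_0)=\dl_{zu_0}$, and for any $v\in X$ pick a walk $\G\colon u_0,u_1,\dots,u_m=v$ and put
\begin{align*}
\varphi(e_z)(v,v)=\dl_{zu_0}+\sum_{i=0}^{m-1}\Delta_{\G,i}(z),
\end{align*}
where $\Delta_{\G,i}(z)$ is the quantity of \cref{vf(e_z)_u_(i+1)u_(i+1)-vf(e_z)_u_iu_i} attached to the step $(u_i,u_{i+1})$ via $\0\m$ exactly as in \cref{vf(e_v_iw_i)=e_u_iu_i+1} (this makes sense because $\0$, being monotone on maximal chains, carries covering edges to covering edges, so $\0\m(e_{u_iu_{i+1}})$ or $\0\m(e_{u_{i+1}u_i})$ is a single basis vector $e_{v_iw_i}$). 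The first point, \emph{well-definedness}, is exactly where admissibility enters: running the computation of \cref{0_vf-is-admissible} in reverse, the total increment $\sum_i\Delta_{\G,i}(z)$ around a \emph{closed} walk equals $-s^+_{\0,\G}(z)+s^-_{\0,\G}(z)+t^+_{\0,\G}(z)-t^-_{\0,\G}(z)$, which vanishes by \cref{s^+-s^-=t^+-t^-}; since $X$ is connected, $\varphi(e_z)(v,v)$ does not depend on $\G$. Note also that $\varphi$ preserves each $L_i$, that $\varphi(e_z)\in D(X,K)$, and that $\varphi(\dl)(v,v)=\sum_z\varphi(e_z)(v,v)=1$ for every $v$ (the two nonzero terms of $\sum_z\Delta_{\G,i}(z)$ cancel), so $\varphi(\dl)=\dl$.

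Next I would verify $\varphi([a,b])=[\varphi(a),\varphi(b)]$ for all pairs $a,b$ of standard basis vectors. For $a,b\in L_0$ this is trivial. For $a=e_z$, $b=e_{uv}$ with $u<v$ and $\0(e_{uv})=e_{ab}$, the identity reduces to $\varphi(e_z)(a,a)-\varphi(e_z)(b,b)=\dl_{zu}-\dl_{zv}$: this holds on covering pairs by the very definition of $\varphi(e_z)$, and it propagates to an arbitrary $u<v$ by choosing a saturated chain from $u$ to $v$ inside a maximal chain of $X$, using that $\0$ is increasing or decreasing on it, and telescoping the covering-pair identities along the $\0$-image chain. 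For $a=e_{xy}$, $b=e_{uv}$ with $x<y$, $u<v$: if $[e_{xy},e_{uv}]=0$ then $e_{xy}$ and $e_{uv}$ commute, hence so do $\0(e_{xy})$ and $\0(e_{uv})$ — monotonicity on a maximal chain through any $3$-chain $x<y<v$ forces $\0$ to preserve commuting pairs of basis vectors in both directions — so $[\varphi(e_{xy}),\varphi(e_{uv})]=0$; if $[e_{xy},e_{uv}]\ne0$, say $y=u$ (so $x<y<v$; the case $v=x$ is symmetric), then monotonicity of $\0$ on a maximal chain through $x<y<v$ gives either $\0(e_{xv})=\0(e_{xy})\0(e_{yv})$, whence $[\0(e_{xy}),\0(e_{yv})]=\0(e_{xv})$ and $\sg(x,v)=\sg(x,y)\sg(y,v)$ by compatibility, or $\0(e_{xv})=\0(e_{yv})\0(e_{xy})$, whence $[\0(e_{xy}),\0(e_{yv})]=-\0(e_{xv})$ and $\sg(x,v)=-\sg(x,y)\sg(y,v)$; in both cases $[\varphi(e_{xy}),\varphi(e_{uv})]=\sg(x,y)\sg(y,v)[\0(e_{xy}),\0(e_{yv})]=\sg(x,v)\0(e_{xv})=\varphi(e_{xv})=\varphi([e_{xy},e_{uv}])$.

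Finally, for \emph{bijectivity}, I would re-run the construction for the ``inverse'' pair. Put $\sg'(u,v):=\sg(x,y)\m$ whenever $\0(e_{xy})=e_{uv}$. Then $\0\m$ is monotone on maximal chains by \cref{0-inv-monotone}; it is admissible because \cref{s^+-s^-=t^+-t^-} amounts to $\0$ carrying the cycle space of the Hasse diagram of $X$ into itself, a condition symmetric in $\0$ and $\0\m$ by finite-dimensionality; and $\sg'$ is compatible with $\0\m$ by a short computation from the compatibility of $\sg$ together with the monotonicity of $\0$. Let $\varphi'$ be the map built from $(\0\m,\sg')$ as above; by the two previous paragraphs it is a Lie endomorphism of $I(X,K)$ with $\varphi'(\dl)=\dl$. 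Now $\varphi'\circ\varphi$ fixes every $e_{xy}$ with $x<y$, since $\varphi'(\sg(x,y)\0(e_{xy}))=\sg(x,y)\sg'(u,v)\0\m(e_{uv})=e_{xy}$, hence fixes $J(I(X,K))$ pointwise; applying $\varphi'\circ\varphi$ to $[e_z,e_{xy}]=(\dl_{zx}-\dl_{zy})e_{xy}$ shows that $(\varphi'\circ\varphi)(e_z)-e_z$ is a diagonal element constant on the connected Hasse diagram of $X$, so $(\varphi'\circ\varphi)(e_z)=e_z+\mu_z\dl$ for some $\mu_z\in K$; and applying $\varphi'\circ\varphi$ to $\dl=\sum_ze_z$ gives $(1+\sum_z\mu_z)\dl=(\varphi'\circ\varphi)(\dl)=\varphi'(\dl)=\dl$, so $\sum_z\mu_z=0$. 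Therefore $\varphi'\circ\varphi$ is the identity on $J(I(X,K))$ and has determinant $1+\sum_z\mu_z=1$ on $D(X,K)$, hence is bijective; so $\varphi$ is injective and, $I(X,K)$ being finite-dimensional, bijective. Thus $\varphi\in\laut(I(X,K))$, it preserves each $L_i$ so $\varphi\in\wtl\laut(I(X,K))$ by \cref{rem_elementary}, and it induces $(\0,\sg)$ by construction.

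The step I expect to be the main obstacle is the Lie-homomorphism verification: getting the combinatorial content of ``monotone on maximal chains'' and ``compatible'' to cooperate in the $(L_0,L_i)$ and $(L_i,L_j)$ cases — in particular the extension from covering pairs to arbitrary comparable pairs — together with checking that the inverse pair $(\0\m,\sg')$ again satisfies all three hypotheses so that the auxiliary map $\varphi'$ is available.
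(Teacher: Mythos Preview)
Your construction and verification are correct and track the paper's proof closely: define $\vf$ on $B$ by \cref{exypermutation}, extend to $D(X,K)$ via walks and the increments $\Delta_{\G,i}(z)$, invoke admissibility for well-definedness, then check the bracket relations on $(L_0,L_0)$, $(L_0,L_i)$ and $(L_i,L_j)$ using monotonicity on maximal chains and compatibility of $\sg$. The paper does exactly this, with the minor difference that it allows an arbitrary choice of the initial values $\vf(e_z)(u_0,u_0)$ subject to $\sum_z\vf(e_z)(u_0,u_0)\ne 0$, whereas you fix the particular choice $\dl_{zu_0}$ (which gives $\vf(\dl)=\dl$); for the existence statement this makes no difference.

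The one genuine divergence is bijectivity. The paper argues directly: if $\sum_z k_z\vf(e_z)=0$, then \cref{vf(e_z)_uu-vf(e_z)_vv=pm-one-or-zero} forces $k_x=k_y$ whenever $x<y$, hence all $k_z$ are equal by connectedness, and then $\sum_z\vf(e_z)(u_0,u_0)\ne 0$ gives $k_z=0$. Your route via an auxiliary $\vf'$ built from $(\0\m,\sg')$ is also valid, and your cycle-space reading of admissibility (namely, $\0$ is admissible iff the bijection $\0|_{B_1\setminus B_2}$ on Hasse edges sends the cycle space of $1$-chains into itself) is correct and does give admissibility of $\0\m$ by finite-dimensionality; the compatibility of $\sg'$ with $\0\m$ is the short check you indicate. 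The determinant computation $\det(\id+\text{rank-one})=1+\sum_z\mu_z$ is right. Your approach is perfectly sound but costs you the three extra verifications for $(\0\m,\sg')$; the paper's kernel argument is shorter and avoids them entirely.
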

	\begin{proof}
		We first show that equality \cref{exypermutation} defines a Lie automorphism of $J(I(X,K))$. Let $x,y,z,w\in X$ such that $x<y$ and $z<w$. If $x\neq w$ and $y\neq z$, then $\vf([e_{xy},e_{zw}])=\vf(0)=0$. Moreover, if $\0(e_{xy})=e_{uv}$ and $\0(e_{zw})=e_{st}$, then $u\neq t$ and $v\neq s$ by \cref{0-inv-monotone}. Thus
		$$[\vf(e_{xy}),\vf(e_{zw})] = \sigma(x,y)\sigma(z,w)[\0(e_{xy}),\0(e_{zw})]=\sigma(x,y)\sigma(z,w)[e_{uv},e_{st}]=0.$$
		If $y=z$, then $\vf([e_{xy},e_{yw}])=\vf(e_{xw})$. On the other hand,
		\begin{align*}
		[\vf(e_{xy}),\vf(e_{yw})] & = \sigma(x,y)\sigma(y,w)[\0(e_{xy}),\0(e_{yw})]\\
		& =\sigma(x,y)\sigma(y,w)(\0(e_{xy})\0(e_{yw})-\0(e_{yw})\0(e_{xy})).
		\end{align*}
		If $\0$ is increasing on a maximal chain containing $x<y<w$, then
		$$
		[\vf(e_{xy}),\vf(e_{yw})] =\sigma(x,w)\0(e_{xw})=\vf(e_{xw}).
		$$
		And if $\0$ is decreasing on a maximal chain containing $x<y<w$, then
		$$[\vf(e_{xy}),\vf(e_{yw})] =-\sigma(x,w)(-\0(e_{xw}))=\vf(e_{xw}).$$
		Analogously, if $x=w$ we have $\vf([e_{xy},e_{zx}])=-\vf(e_{zy})=[\vf(e_{xy}),\vf(e_{zx})]$. Thus $\varphi$ is a Lie endomorphism of $J(I(X,K))$. Let $e_{uv}\in J(I(X,K))$. There exist unique $x<y$ in $X$ such that $\0(e_{xy})=e_{uv}$. Then $e_{uv}=\sigma(x,y)^{-1}\vf(e_{xy})=\vf(\sigma(x,y)^{-1}e_{xy})$ and so $\varphi$ is bijective.
		
		Now, we need to show that $\vf$ extends to a Lie automorphism of $I(X,K)$. Let us fix $u_0\in X$ and $\vf(e_z)(u_0,u_0)\in K$ ($z\in X$) in a way that $\sum_{z\in X}\vf(e_z)(u_0,u_0)\ne 0$. Given $v\in X$, there is a walk $\G: u_0,u_1,\dots,u_m=v$ from $u_0$ to $v$, since $X$ is connected. Then we define $\vf(e_z)(v,v)$ by formulas \cref{vf(e_u)_vv=vf(e_u)_u_0u_0+sum,vf(e_z)_u_(i+1)u_(i+1)-vf(e_z)_u_iu_i}. We will show that the definition does not depend on the choice of a walk from $u_0$ to $v$. For, if there is another walk $\G': u'_0=u_0,u'_1,\dots,u'_{m'}=v$ from $u_0$ to $v$, then there is a closed walk $\Omega: u_0,u_1,\dots,u_m=v=u'_{m'},u_{m+1}=u'_{m'-1},\dots,u_{m+m'}=u'_0=u_0$. Since $\0$ is admissible,
		\begin{align*}
		0&=-s^+_{\0,\Omega}(z)+s^-_{\0,\Omega}(z)+t^+_{\0,\Omega}(z)-t^-_{\0,\Omega}(z)\\
		&=\sum_{i=0}^{m+m'-1}\Delta_{\Omega,i}(z)=\sum_{i=0}^{m-1}\Delta_{\G,i}(z)+\sum_{i=m}^{m+m'-1}\Delta_{\Omega,i}(z),
		\end{align*}
		where $\Delta_{\Omega,i}(z)$ and $\Delta_{\G,i}(z)$ are given by \cref{vf(e_z)_u_(i+1)u_(i+1)-vf(e_z)_u_iu_i}. Hence
		\begin{align*}
		\sum_{i=0}^{m-1}\Delta_{\G,i}(z)&=-\sum_{i=m}^{m+m'-1}\Delta_{\Omega,i}(z)=\sum_{i=0}^{m'-1}(-\Delta_{\Omega,i+m}(z))\\
		&=\sum_{i=0}^{m'-1}\Delta_{\G',m'-i-1}(z)=\sum_{i=0}^{m'-1}\Delta_{\G',i}(z).
		\end{align*}
		So, $\vf(e_z)(v,v)$ is well-defined by \cref{vf(e_u)_vv=vf(e_u)_u_0u_0+sum}.
		
		Observe that, by the construction, $\vf$ satisfies \cref{vf(e_z)_uu-vf(e_z)_vv=pm-one-or-zero} for all $u<v$. Indeed, there is a walk starting at $u_0$ of the form $\G: u_0,\dots,u_l=u,\dots,u_m=v$, where $u_l<\dots<u_m$. Using this walk to define $\vf(e_z)(u,u)$ and $\vf(e_z)(v,v)$, by \cref{vf(e_u)_vv=vf(e_u)_u_0u_0+sum} we have
		\begin{align}\label{vf(e_z)(v_v)-vf(e_z)(u_u)=sum}
		\vf(e_z)(v,v)-\vf(e_z)(u,u)=\sum_{i=l}^{m-1}\Delta_{\G,i}(z).
		\end{align}
		Assume that $\0\m$ is increasing on a maximal chain containing $u_{l}<\dots<u_m$. Then there exist $v_{l}<\dots<v_m$ such that $\0(e_{v_iv_j})=e_{u_iu_j}$, $l\le i<j\le m$. If $z=v_{l}$, then $\Delta_{\G,l}(z)=-1$ and $\Delta_{\G,i}(z)=0$ for all $l<i\le m-1$ by \cref{vf(e_z)_u_(i+1)u_(i+1)-vf(e_z)_u_iu_i}. Hence, the right-hand side of \cref{vf(e_z)(v_v)-vf(e_z)(u_u)=sum} equals $-1$. But $\0(e_{zv_m})=e_{u_lu_m}=e_{uv}$, so \cref{vf(e_z)_uu-vf(e_z)_vv=pm-one-or-zero} holds for $u<v$. Similarly, if $z=v_m$, then $\Delta_{\G,m-1}(z)=1$ and $\Delta_{\G,i}(z)=0$ for all $l\le i<m-1$ by \cref{vf(e_z)_u_(i+1)u_(i+1)-vf(e_z)_u_iu_i}. Hence, the right-hand side of \cref{vf(e_z)(v_v)-vf(e_z)(u_u)=sum} equals $1$ and $\0(e_{v_lz})=e_{uv}$ justifying \cref{vf(e_z)_uu-vf(e_z)_vv=pm-one-or-zero} for $u<v$. If $z=v_k$ with $l<k<m$, then $\Delta_{\G,k}(z)=-1$, $\Delta_{\G,k-1}(z)=1$ and $\Delta_{\G,i}(z)=0$ for all $l\le i\le m-1$, $i\not\in\{k-1, k\}$. Hence, the right-hand side of \cref{vf(e_z)(v_v)-vf(e_z)(u_u)=sum} equals $0$ and $\0(e_{v_lv_m})=e_{uv}$ with $z\not\in\{v_l,v_m\}$, so \cref{vf(e_z)_uu-vf(e_z)_vv=pm-one-or-zero} still holds. Finally, if $z\ne v_i$ for all $l\le i\le m$, then $\Delta_{\G,i}(z)=0$ for all $l\le i\le m-1$, so that the right-hand side of \cref{vf(e_z)(v_v)-vf(e_z)(u_u)=sum} again equals $0$. The decreasing case is similar.
		
		Now, define $\vf(e_z)=\sum_{v\in X}\vf(e_z)(v,v)e_v\in D(X,K)$. Automatically, we have $[\vf(e_x),\vf(e_y)]=0$ for all $x,y$. Let $x<y$ and $\0(e_{xy})=e_{uv}$. Then it follows from \cref{vf(e_z)_uu-vf(e_z)_vv=pm-one-or-zero} that for all $z\in X$,
		\begin{align*}
		[\vf(e_z),\vf(e_{xy})]&=[\vf(e_z)_D,\sg(x,y)e_{uv}]=\sg(x,y)(\vf(e_z)(u,u)-\vf(e_z)(v,v))e_{uv}\\
		&=
		\begin{cases}
		\sg(x,y)e_{uv},  & z=x,\\
		-\sg(x,y)e_{uv}, & z=y,\\
		0,            & z\not\in\{x,y\}
		\end{cases}
		=
		\begin{cases}
		\vf(e_{xy}),  & z=x,\\
		-\vf(e_{xy}), & z=y,\\
		0,            & z\not\in\{x,y\}
		\end{cases}\\
		&=\vf([e_z,e_{xy}]).
		\end{align*}
		Thus, the extended $\vf$ is a Lie endomorphism of $I(X,K)$.
		
		It remains to prove that $\vf$ is bijective. Since $\vf$ maps bijectively $J(I(X,K))$ onto itself and $\vf(D(X,K))\subseteq D(X,K)$
		by construction, then it suffices to show that $\vf$ is bijective on $D(X,K)$. Suppose that $\sum_{z\in X}k_z\vf(e_z)=0$ for some
		$\{k_z\}_{z\in X}\sst K$, at least one of which is non-zero. Let $x<y$ and $\0(e_{xy})=e_{uv}$. Then
		\begin{align}
		\sum_{z\in X}k_z(\vf(e_z)(v,v)-\vf(e_z)(u,u)) & =\left(\sum_{z\in X}k_z\vf(e_z)\right)(v,v)-\left(\sum_{z\in X}k_z\vf(e_z)\right)(u,u)\notag\\
		& =0. \label{sum-k_z(vf(e_z)_vv-vf(e_z)_uu}
		\end{align}
		On the other hand, by \cref{vf(e_z)_uu-vf(e_z)_vv=pm-one-or-zero} the left-hand side of \cref{sum-k_z(vf(e_z)_vv-vf(e_z)_uu} equals $k_y-k_x$. Thus, $k_x=k_y$. Since $X$ is connected, we have $k_x=k_y\ne 0$ for all $x,y\in X$, whence
		\begin{align*}
		0=\sum_{z\in X}k_z\vf(e_z)(u_0,u_0)=k_{u_0}\sum_{z\in X}\vf(e_z)(u_0,u_0),
		\end{align*}
		which contradicts the choice of $\vf(e_z)(u_0,u_0)$ at the beginning of the proof. Therefore $\vf$ is a Lie automorphism of $I(X,K)$. Moreover, since $\vf(L_i)\subseteq L_i$ for all $i\geq 0$, then $\vf\in\wtl\laut(I(X,K))$, by \cref{rem_elementary}.
	\end{proof}

	\begin{definition}\label{defn-of-tau_sg_0_c}
		Let $X=\{x_1,\dots,x_n\}$. Given an admissible bijection $\0:B\to B$ which is monotone on maximal chains in $X$, a map $\sg:X_<^2\to K^*$ compatible with $\0$
		and a sequence $c=(c_1,\dots,c_n)\in K^n$ such that $\sum_{i=1}^nc_i\in K^*$, define $\tau=\tau_{\0,\sg,c}$ to be the elementary Lie
		automorphism of $I(X,K)$ such that $\tau|_{J(I(X,K))}$ is given by the right-hand side of \cref{exypermutation} and $\tau|_{D(X,K)}$ is determined
		by 
		\begin{align}\label{tau(e_x_i)(x_1_x_1)=c_i}
		\tau(e_{x_i})(x_1,x_1)=c_i,
		\end{align}
		$i=1,\dots,n$, as in \cref{0_vf=0_psi-and-vf(e_z)_uu=psi(e_z)_uu,existense-of-tau_sg_0_c}.
	\end{definition}
	
	\begin{theorem}\label{vf-decomp-as-tau_0_sg_c}
		Each $\vf\in\wtl\laut(I(X,K))$ can be uniquely represented in the form
		\begin{align}\label{vf=tau_0_sg_c}
		\vf=\tau_{\0,\sg,c},
		\end{align}
		where $\0:B\to B$ is an admissible bijection which is monotone on maximal chains in $X$, $\sg:X_<^2\to K^*$ is a map compatible with $\0$ and $c=(c_1,\dots,c_n)\in K^n$ is such that $\sum_{i=1}^nc_i\in K^*$.
	\end{theorem}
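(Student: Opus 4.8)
The statement is the final assembly of the structural results of \cref{sec-elementary}, so the proof is largely bookkeeping; the plan is as follows.

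\emph{Construction of the triple from $\vf$.} Given $\vf\in\wtl\laut(I(X,K))$, I would take $\0=\0_\vf$ and $\sg=\sg_\vf$ as produced in \cref{exypermutation-for-wtl-vf}, and set $c=(c_1,\dots,c_n)$ with $c_i=\vf(e_{x_i})(x_1,x_1)$. Then $\0:B\to B$ is a bijection by \cref{exypermutation-for-wtl-vf}; it is admissible by \cref{0_vf-is-admissible}; and it is monotone on maximal chains in $X$ by \cref{0_vf-on-a-chain} together with the definition of monotonicity. The map $\sg:X^2_<\to K^*$ is compatible with $\0$ by \cref{0_vf(e_xy)-and-0_vf(e_yz)}, and $\sum_{i=1}^n c_i\ne 0$ by \cref{sum-vf(e_z)(uu)-nonzero}. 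Hence $(\0,\sg,c)$ satisfies the hypotheses of \cref{defn-of-tau_sg_0_c} and $\tau_{\0,\sg,c}$ is defined; here the legitimacy of \cref{defn-of-tau_sg_0_c} is precisely \cref{existense-of-tau_sg_0_c} (existence of an elementary Lie automorphism with the prescribed restriction to $J(I(X,K))$) together with \cref{0_vf=0_psi-and-vf(e_z)_uu=psi(e_z)_uu} (uniqueness once the diagonal values at $(x_1,x_1)$ are fixed).

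\emph{The equality $\vf=\tau_{\0,\sg,c}$.} Write $\tau=\tau_{\0,\sg,c}$ and recall $I(X,K)=D(X,K)\oplus J(I(X,K))$, so it suffices to compare $\vf$ and $\tau$ on each summand. On $J(I(X,K))$ both send $e_{xy}$ to $\sg(x,y)\0(e_{xy})$ --- for $\vf$ by \cref{exypermutation}, for $\tau$ by \cref{defn-of-tau_sg_0_c} --- so they agree there; in particular both $\vf$ and $\tau$ induce $(\0,\sg)$, whence $\0_\tau=\0=\0_\vf$. On $D(X,K)$ we have in addition $\tau(e_{x_i})(x_1,x_1)=c_i=\vf(e_{x_i})(x_1,x_1)$ by \cref{tau(e_x_i)(x_1_x_1)=c_i}, so \cref{0_vf=0_psi-and-vf(e_z)_uu=psi(e_z)_uu} (applied with $u_0=x_1$) gives $\tau(e_z)=\vf(e_z)$ for every $z\in X$, i.e.\ $\vf$ and $\tau$ agree on $D(X,K)$ as well. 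Therefore $\vf=\tau_{\0,\sg,c}$.

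\emph{Uniqueness.} Suppose $\tau_{\0,\sg,c}=\tau_{\0',\sg',c'}$ for two triples satisfying the hypotheses. Evaluating both sides on a basis vector $e_{xy}\in B$ gives $\sg(x,y)\0(e_{xy})=\sg'(x,y)\0'(e_{xy})$; since each side is a nonzero scalar multiple of a single element of $B$, comparing supports and coefficients forces $\0=\0'$ and $\sg=\sg'$. Evaluating on $e_{x_i}$ at $(x_1,x_1)$ and using \cref{tau(e_x_i)(x_1_x_1)=c_i} then gives $c_i=c_i'$ for all $i$, so $c=c'$, and the representation \cref{vf=tau_0_sg_c} is unique.

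\emph{The main obstacle.} Honestly, there is no serious obstacle left: all of the combinatorial and analytic content has already been isolated in \cref{exypermutation-for-wtl-vf,0_vf(e_xy)-and-0_vf(e_yz),0_vf-on-a-chain}, in the admissibility remarks, and in \cref{existense-of-tau_sg_0_c,0_vf=0_psi-and-vf(e_z)_uu=psi(e_z)_uu}. The one point to keep straight is that \cref{defn-of-tau_sg_0_c} is well-posed and that $\0_{\tau_{\0,\sg,c}}$ really coincides with the $\0$ we started from, which is exactly what makes the diagonal-matching step via \cref{0_vf=0_psi-and-vf(e_z)_uu=psi(e_z)_uu} go through.
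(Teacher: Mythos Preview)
Your proposal is correct and follows essentially the same approach as the paper's own proof: construct $(\0,\sg,c)$ from $\vf$ via \cref{exypermutation-for-wtl-vf}, verify the required properties by \cref{0_vf(e_xy)-and-0_vf(e_yz),0_vf-on-a-chain,0_vf-is-admissible,sum-vf(e_z)(uu)-nonzero}, match on $J(I(X,K))$ by \cref{exypermutation} and on $D(X,K)$ by \cref{0_vf=0_psi-and-vf(e_z)_uu=psi(e_z)_uu}, and read off uniqueness from \cref{exypermutation,tau(e_x_i)(x_1_x_1)=c_i}. Your write-up is slightly more detailed in the uniqueness step, but there is no substantive difference.
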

	\begin{proof}
		Let $X=\{x_1,\dots,x_n\}$. We define $\0=\0_\vf$, $\sg=\sg_\vf$ and $c_i=\vf(e_{x_i})(x_1,x_1)$, $i=1,\dots,n$. Then $\0$, $\vf$ and $c$ satisfy the desired properties by \cref{0_vf(e_xy)-and-0_vf(e_yz),0_vf-on-a-chain,0_vf-is-admissible,sum-vf(e_z)(uu)-nonzero}.
		Hence, $\tau:=\tau_{\0_\vf,\sg_\vf,c}$ is well-defined. For all $x_i<x_j$ we have
		$
		\vf(e_{x_ix_j})=\sg(x_i,x_j)\0(e_{x_ix_j})=\tau(e_{x_ix_j}),
		$
		so $\vf|_{J(I(X,K))}=\tau|_{J(I(X,K))}$. Moreover, $\vf|_{D(X,K)}=\tau|_{D(X,K)}$ by  \cref{0_vf=0_psi-and-vf(e_z)_uu=psi(e_z)_uu}. The uniqueness of \cref{vf=tau_0_sg_c} follows from \cref{exypermutation,tau(e_x_i)(x_1_x_1)=c_i}.
	\end{proof}
	
	\begin{corollary}\label{Laut-T_n(K)}
		Each Lie automorphism $\vf$ of $T_n(K)$ is a composition $\psi\circ\mu$, where $\psi$ is an automorphism or the negative of an anti-automorphism and $\mu$ is a Lie automorphism given by $\mu(e_{ij})=e_{ij}$ for all $i<j$ and $\mu(e_i)=e_i+\af_i\dl$ for all $i$, where $1+\sum_{i=1}^n \af_i\in K^*$. Equivalently, $\vf=\psi+\nu$, where $\nu(e_{ij})=0$ for all $i<j$ and $\nu(e_i)=\af_i\dl$ for all $i$, therefore $\vf$ is proper.
	\end{corollary}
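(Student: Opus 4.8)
The plan is to run the chain $X=\{1<2<\dots<n\}$ through \cref{LAut-cong-Inn_1-rtimes-wtl-LAut,vf-decomp-as-tau_0_sg_c}, noting that $I(X,K)=T_n(K)$. Since $X$ contains no cycles, every bijection $\0:B\to B$ is admissible (\cref{defn-admissible}); and since $X$ has a single maximal chain, namely $1<\dots<n$, a bijection of $B$ is monotone on maximal chains precisely when it is increasing or decreasing on that chain, and by \cref{0_vf-on-a-chain} this forces $\0=\id_B$ in the first case and $\0=\rho_B$ in the second, where $\rho_B(e_{ij})=e_{n+1-j,\,n+1-i}$ for $i<j$. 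So, applying \cref{LAut-cong-Inn_1-rtimes-wtl-LAut} and then \cref{vf-decomp-as-tau_0_sg_c}, I would write $\vf=\xi_\bt\circ\tau_{\0,\sg,c}$ with $\xi_\bt\in\inn_1(T_n(K))$, $\0\in\{\id_B,\rho_B\}$, $\sg$ compatible with $\0$ and $\sum_i c_i\ne 0$.

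Next I would set up two auxiliary tools. For an invertible diagonal $\gm=\sum_i\gm_ie_i$, the conjugation $\xi_\gm$ sends $e_{ij}\mapsto\gm_i\gm_j\m e_{ij}$ and fixes each $e_i$, so it preserves every $L_i$ and is elementary by \cref{rem_elementary}; likewise the anti-automorphism $\rho$ of $T_n(K)$ determined by $\rho(e_{ij})=e_{n+1-j,\,n+1-i}$ satisfies $\rho^2=\id$, and $-\rho$ is an (involutive) elementary Lie automorphism, again by \cref{rem_elementary}. I would also compute, applying \cref{vf(e_z)-on-diagonal} to consecutive pairs $(i,i+1)$ and telescoping, that for any $c'$ with $\sum_i c'_i\ne 0$ one has $\tau_{\id_B,1,c'}(e_{ij})=e_{ij}$, $\tau_{\id_B,1,c'}(e_1)=e_1+(c'_1-1)\dl$ and $\tau_{\id_B,1,c'}(e_i)=e_i+c'_i\dl$ for $i\ge 2$; thus $\tau_{\id_B,1,c'}=\id+\nu_0$ with $\nu_0(e_{ij})=0$, $\nu_0(e_i)=\af_i\dl$ and $1+\sum_i\af_i=\sum_i c'_i\in K^*$. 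I will call any such Lie automorphism $\mu$.

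For the increasing case, $\sg$ being compatible with $\id_B$ means $\sg(i,k)=\sg(i,j)\sg(j,k)$ for all $i<j<k$, so $\sg(i,j)=g(j)g(i)\m$ where $g(1)=1$ and $g(j)=\sg(1,j)$. Choosing $\gm=\sum_i g(i)\m e_i$, the map $\xi_\gm\m\circ\tau_{\0,\sg,c}$ is an elementary Lie automorphism fixing every $e_{ij}$, hence equal to some $\mu$ by \cref{vf-decomp-as-tau_0_sg_c}, and $\vf=\xi_\bt\circ\xi_\gm\circ\mu=\psi\circ\mu$ with $\psi=\xi_{\bt\gm}$ an automorphism. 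For the decreasing case, compatibility of $\sg$ with $\rho_B$ reads $\sg(i,k)=-\sg(i,j)\sg(j,k)$ (because $\rho_B(e_{ik})=\rho_B(e_{jk})\rho_B(e_{ij})$), i.e. $-\sg$ is multiplicative; now $(-\rho)\circ\tau_{\0,\sg,c}$ is elementary and sends $e_{ij}\mapsto -\sg(i,j)e_{ij}$, so the same argument gives $(-\rho)\circ\tau_{\0,\sg,c}=\xi_\gm\circ\mu$ for a suitable diagonal $\gm$ and some $\mu$. Since $(-\rho)\m=-\rho$, this yields $\tau_{\0,\sg,c}=(-\rho)\circ\xi_\gm\circ\mu=\bigl(-(\rho\circ\xi_\gm)\bigr)\circ\mu$ and hence $\vf=\xi_\bt\circ\tau_{\0,\sg,c}=\bigl(-(\xi_\bt\circ\rho\circ\xi_\gm)\bigr)\circ\mu=\psi\circ\mu$, where $\psi=-(\xi_\bt\circ\rho\circ\xi_\gm)$ is the negative of an anti-automorphism. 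In either case $\vf=\psi\circ(\id+\nu_0)=\psi+\psi\circ\nu_0$, and $\nu:=\psi\circ\nu_0$ satisfies $\nu(e_{ij})=0$ and $\nu(e_i)=\af_i\psi(\dl)=\pm\af_i\dl$; since $\nu$ is $K$-linear, central-valued, and vanishes on $[T_n(K),T_n(K)]=J(T_n(K))$ by \cref{J}, this exhibits $\vf=\psi+\nu$ as proper.

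I expect the decreasing case to be the main obstacle: one must notice that it is $-\sg$, not $\sg$, that is the multiplicative cocycle, and that $-\rho$ (not $\rho$) is the reflection to peel off; one must also verify at each step that the intermediate maps $\xi_\gm\m\circ\tau_{\0,\sg,c}$ and $(-\rho)\circ\tau_{\0,\sg,c}$ are still \emph{elementary}, so that \cref{vf-decomp-as-tau_0_sg_c} applies to them and identifies the leftover factor as a $\mu$ — this is precisely where \cref{rem_elementary} and the fact that $\wtl\laut(I(X,K))$ is a subgroup are used.
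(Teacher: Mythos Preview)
Your proof is correct and follows essentially the same route as the paper's: reduce to the elementary case via \cref{LAut-cong-Inn_1-rtimes-wtl-LAut}, observe that on a single chain the admissible monotone bijections are exactly $\id_B$ and the reversal $\rho_B$, peel off a diagonal conjugation (your $\xi_\gm$, the paper's conjugation by $\eta$) in the increasing case, and precompose with $-\rho$ in the decreasing case to reduce to the increasing one. The only cosmetic differences are that you carry the inner factor $\xi_\bt$ explicitly throughout (the paper absorbs it by assuming $\vf$ elementary), and you invoke the uniqueness clause of \cref{vf-decomp-as-tau_0_sg_c} to identify the residual factor with a $\mu$, whereas the paper computes $\vf(e_i)$ directly from \cref{vf(e_u)_vv=vf(e_u)_u_0u_0+sum,vf(e_z)_u_(i+1)u_(i+1)-vf(e_z)_u_iu_i}.
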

	\begin{proof}
		Indeed, $T_n(K)\cong I(X,K)$ where $X$ is the chain $x_1=1<2<\dots<n=x_n$. Let $\vf\in\laut(I(X,K))$. In view of \cref{LAut-cong-Inn_1-rtimes-wtl-LAut} we may assume that $\vf\in\wtl\laut(I(X,K))$. Then $\vf=\tau_{\0,\sg,c}$ as in \cref{vf-decomp-as-tau_0_sg_c}.  The bijection $\0:B\to B$ is increasing on $X$ if and only if $\0=\id_B$. In this case $\vf(e_{ij})=\sg(i,j)e_{ij}$, $i<j$, for some $\sg:X^2_<\to K^*$ satisfying $\sg(i,k)=\sg(i,j)\sg(j,k)$, $i<j<k$. Observe that $\sg(i,j)=\sg(1,i)\m\sg(1,j)$, $1<i<j$, so $\vf(e_{ij})=\eta\m e_{ij}\eta$, $i<j$, where $\eta=e_1+\sum_{i=2}^n\sg(1,i)e_i$. Moreover, $\eta\m e_i\eta=e_i$ for any $i$. It also follows from \cref{vf(e_u)_vv=vf(e_u)_u_0u_0+sum,vf(e_z)_u_(i+1)u_(i+1)-vf(e_z)_u_iu_i} that $\vf(e_i)(j,j)=c_i+\dl_{ij}$ for all $i\ne 1$ and $\vf(e_1)(j,j)=c_1-1+\dl_{1j}$, whence $\vf(e_i)=e_i+\af_i\dl$, where $\af_1=c_1-1$ and $\af_i=c_i$, $i>1$. The decompositions of $\vf$ as $\psi\circ\mu$ and $\psi+\nu$, in which $\psi$ is the conjugation by $\eta\m$, are now clear.
		
		If $\0$ is decreasing on $X$, then $\0(e_{ij})=e_{n-j+1,n-i+1}$, $1\le i<j\le n$, so $\vf(e_{ij})=\sg(i,j)e_{n-j+1,n-i+1}$, where $\sg:X^2_<\to K^*$ is such that $\sg(i,k)=-\sg(i,j)\sg(j,k)$, $i<j<k$. Therefore, $\vf=\lb\circ\vf'$, where $\lb(e_{ij})=-e_{n-j+1,n-i+1}$ is the negative of an anti-automorphism of $I(X,K)$ and $\vf'$ is as in the increasing case.
	\end{proof}
	
	\begin{example}
		Let $X=\{1,2,3,4\}$, where $1<2<3$, $1<4$ and $4$ is incomparable with $2$ and $3$. Observe that $I(X,K)$ has no anti-automorphism  by \cite[Theorem 3]{BFS12}. Consider the following linear map:
		$
		\vf(e_1) = -e_3 - e_4,
		\vf(e_{12}) = e_{23},
		\vf(e_{13}) = -e_{13},
		\vf(e_{14}) = e_{14},
		\vf(e_2) = e_1 + e_3 + e_4,
		\vf(e_{23}) = e_{12},
		\vf(e_3) = e_2 + e_3,
		\vf(e_4) = e_4.    
		$
		Then $\vf=\tau_{\0,\sg,c}$, where $\0(e_{12})=e_{23},\0(e_{23})=e_{12},\0(e_{13})=e_{13},\0(e_{14})=e_{14}$, $\sg(1,2)=\sg(2,3)=\sg(1,4)=1,\sg(1,3)=-1$, $c_1=c_3=c_4=0,c_2=1$. Assume that $\vf$ is proper, i.e., $\vf=\psi+\nu$, where $\psi\in\Aut(I(X,K))$ and $\nu$ is a central-valued map. There is $i\in X$ such that $\psi(e_1)_D=e_i$ by \cite[Lemma 1]{Kh-aut}. Then $\vf(e_1)_D=e_i+\af\dl$ for some $\af\in K$, i.e., $\vf(e_1)(i,i)=\vf(e_1)(j,j)+1$ for all $j\ne i$, a contradiction.
	\end{example}
	\section*{Acknowledgements}
	This work was partially  supported by CNPq 404649/2018-1 and by the Funda\c{c}\~ao para a Ci\^encia e a Tecnologia (Portuguese Foundation for Science and Technology) through the project PTDC/MAT-PUR/31174/2017. We are grateful to the referee for pointing out a gap in the proof of \cref{elementary_phi_tilde}, as well as numerous small inaccuracies throughout the text. In particular, we thank the referee for the suggestion to reduce the proof of \cref{<e_xy><e_uv>-is-zero}.

	\bibliography{bibl}{}
	\bibliographystyle{acm}

\end{document}